\newtheorem{theorem}{\textbf{Theorem}}[section]
\newtheorem{lemma}{\textbf{Lemma}}[section]
\newtheorem{proposition}{\textbf{Proposition}}[section]
\newtheorem{corollary}{\textbf{Corollary}}[section]
\newtheorem{remark}{\textbf{Remark}}[section]
\newtheorem{definition}{\textbf{Definition}}[section]
\numberwithin{equation}{section}
\newcounter{marnote}
\def\br{\begin{remark}}
\def\er{\end{remark}}
\def\bp{\begin{proposition}}
\def\ep{\end{proposition}}
\def\bc{\begin{corollary}}
\def\ec{\end{corollary}}
\def\bd{\begin{definition}}
\def\ed{\end{definition}}
\def\non{\nonumber}
\newcommand{\RR}{\mathbb R}
\newcommand{\mcS}{\mathcal S}
\DeclareMathOperator{\tr}{tr}
\newcommand\defeq{\stackrel{\scriptscriptstyle \mathrm{def}}=}
\newcommand\p{\partial}
\newcommand{\R}{{\mathbb R}}
\newcommand{\F}{\mathcal F}
\newcommand{\E}{\mathcal E}
\newcommand{\Id}{\mathbb I}
\newcommand{\HH }{\mathcal H}
\newcommand{\bulk}{_\mathrm{bulk}}
\newcommand{\el}{_\mathrm{elastic}}
\newcommand{\tot}{_\mathrm{total}}
\newcommand{\T}{\mathbb{T}^2}
\newcommand{\bu}{\mathbf{u}}
\renewcommand{\le}{\leqslant}
\renewcommand{\geq}{\geqslant}
\renewcommand{\leq}{\leqslant}
\begin{document}

\title{Global Well-posedness of the Two Dimensional Beris--Edwards System with General Laudau--de Gennes Free Energy}

\author{
  Yuning Liu\thanks{NYU Shanghai, 1555 Century Avenue, Shanghai 200122, China.
  Email: \texttt{yl67@nyu.edu}.}
\and
 Hao Wu\thanks{
    School of Mathematical Sciences, Key Laboratory of Mathematics for Nonlinear Science (Fudan University), Ministry of
Education, Shanghai Key Laboratory for Contemporary Applied Mathematics, Fudan University, Shanghai 200433, China.
Email: \texttt{haowufd@fudan.edu.cn}.}
\and
  Xiang Xu\thanks{Department of Mathematics and Statistics,
  Old Dominion University, Norfolk, VA 23529, USA.
  Email: \texttt{x2xu@odu.edu}.}
}

\date{}
\maketitle
\begin{abstract}
In this paper, we consider the Beris--Edwards system for incompressible nematic liquid crystal flows.
The system under investigation consists of the Navier--Stokes equations for the fluid velocity $\bu$ coupled with an evolution equation for the order parameter $Q$-tensor.
One important feature of the system is that its elastic free energy takes a general form and in particular, it contains a cubic term that possibly makes it unbounded from below.
In the two dimensional periodic setting, we prove that if the initial $L^\infty$-norm of the $Q$-tensor is properly small,
then the system admits a unique global weak solution. The proof is based on the construction of a specific approximating system that preserves the $L^\infty$-norm of the $Q$-tensor along the time evolution.\medskip

\noindent \textbf{Key words}. Beris--Edwards system, liquid crystal flow, $Q$-tensor, global weak solution.

\noindent \textbf{AMS subject classifications}. 35Q35, 35Q30, 76D03, 76D05.

\end{abstract}

\section{Introduction}
\setcounter{equation}{0}

The Landau--de Gennes theory is a fundamental continuum theory that
describes the state of nematic liquid crystals \cite{dG93,B12}. In
this framework, the local orientation and degree of ordering for the
liquid crystal molecules are modelled by a symmetric, traceless
$d\times d$ matrix $Q$ in $\mathbb{R}^d$ ($d=2,3$), known as the
$Q$-tensor order parameter \cite{BM10, BZ11}. The so-called
Landau--de Gennes free energy functional is   a nonlinear
integral functional of the $Q$-tensor and its spatial derivatives
\cite{B12}:
\begin{equation}\label{def of LG energy}
  \E(Q)=\int_{\Omega}\F (Q(x),\,\nabla Q(x))\,dx,
\end{equation}
where $Q$ is a matrix valued function defined on the spatial domain $\Omega\subset \R^d$ that
takes values in the configuration space
\begin{equation}
 \mathcal{S}^{(d)}_0\defeq \{M\in\RR^{d\times d}:\,M=M^{T},\,\tr(M)=0\}.\nonumber
\end{equation}
The energy density function $\F$ in \eqref{def of LG energy} is composed of
an elastic part and a bulk part \cite{BM10,MN14}:
\begin{equation}\label{energy-density}
   \F(Q,\nabla Q)\defeq \F\el(Q,\nabla Q)+\F\bulk(Q).
\end{equation}
The bulk free-energy describes the isotropic-nematic phase
transition. Its density function $\mathcal{F}\bulk$ is typically a truncated expansion in the scalar invariants of the $Q$-tensor,
which in the simplest setting takes the following form \cite{dG93}:
\begin{equation}\label{bulk-energy}
\mathcal{F}\bulk(Q)\defeq
\dfrac{a}{2}\mathrm{tr}(Q^2)-\dfrac{b}{3}\tr(Q^3)+\dfrac{c}{4}\tr^2(Q^2).
\end{equation}
Here, $a,\, b,\, c\in \mathbb{R}$ are material-dependent and temperature-dependent
constants. On the other hand, the elastic free energy characterizes
the distortion effect of the liquid crystal and its density function
$\mathcal{F}\el$ gives the strain energy density due to spatial
variations in the $Q$-tensor:
\begin{equation}\label{elastic-energy}
 \mathcal{F}\el(Q,\nabla Q) \defeq L_1\partial_kQ_{ij}\partial_kQ_{ij}+L_2\partial_jQ_{ik}\partial_kQ_{ij}
+L_3\partial_jQ_{ij}\partial_kQ_{ik}+L_4Q_{lk}\partial_kQ_{ij}\partial_lQ_{ij},
\end{equation}
for $1\leq i,\, j,\, k,\, l\leq d$. Throughout this paper, we use
the Einstein summation convention over repeated indices. The
coefficients $L_1,\, L_2,\, L_3,\, L_4$ are material-dependent
elastic constants. We note that $\mathcal{F}\el$ consists of three
independent terms associated with $L_1,\, L_2,\, L_3$ that are
quadratic in the first order partial derivatives of the $Q$-tensor, plus a
cubic term associated with the coefficient $L_4$. In the literature,
the case $L_2 = L_3 = L_4 = 0$ is usually called isotropic,
otherwise anisotropic if at least one of $L_2$, $L_3$, $L_4$ does
not vanish. In particular, the retention of the cubic term (i.e.,
$L_4\neq 0$) is due to the physically relevant consideration that it
allows a complete reduction of the Landau--de Gennes energy $\E(Q)$
to the classical Oseen--Frank energy for nematic liquid crystals \cite{M10,BZ11} (see also
\cite[Appendix B]{IXZ14}). There exists a vast recent literature on
the mathematical study of the Landau--de Gennes theory, and we refer
 interested readers to \cite{ADL14, ADL15, B12, BM10, BZ11, BE94,
CSX16, CRWX15, CX15, DMM17, DFRSS14, D15, DAZ, FRSZ14, FRSZ15, GR14,
GR15, HD14, HLWZZ15, HZ18, IXZ14, LW16, M10, MZ10, PZ11, PZ12,
WZZ15, W15, WXZ18} as well as the references cited therein.

In this paper, we study a basic model for the evolution of an incompressible nematic
liquid crystal flow, which was first proposed by Beris--Edwards \cite{BE94}.
The resulting PDE system consists of the Navier--Stokes equations
for the fluid velocity $\bu$ and nonlinear convection-diffusion equations
of parabolic type for the $Q$-tensor (see, e.g., \cite{ADL14,CRWX15,PZ11}).
To simplify the mathematical setting, throughout this paper we confine ourselves to
the two dimensional periodic case such that $d=2$ and $\Omega=\T$,
where $\mathbb{T}^{2}$ stands for the periodic box with period $\ell_i$ in the $i$-th direction with
$\mathcal{O}=(0,\ell_1)\times(0,\ell_2)$ being the periodic cell.
Without loss of generality, we simply set $\mathcal{O}=(0,1)^2$.
Then the coupled system we are going to study takes the following form:
\begin{align}
\p_t \bu +\bu\cdot\nabla{\bu}-\nu \Delta \bu+\nabla P&\,=\,\nabla\cdot(\sigma^a+\sigma^s),\qquad\qquad\, \forall\, (x,t) \in \T\times\mathbb{R}^+,\label{NSE}\\
\nabla\cdot{\bu}&\,=\,0,\qquad\qquad\qquad\qquad\quad\;\, \forall\, (x,t) \in \T\times\mathbb{R}^+,\label{incompressibility}\\
\p_t Q +\bu\cdot\nabla{Q}+Q\omega-\omega{Q}
&\,=\,\HH+\lambda\Id+\mu-\mu^{T},\qquad\;\;\, \forall\, (x,t) \in
\T\times\mathbb{R}^+.\label{Q equ}
\end{align}
The vector $\bu(x, t): \T \times \mathbb{R}^+ \rightarrow
\R^2$ denotes the velocity field of the fluid, $Q(x, t): \T \times
\mathbb{R}^+ \rightarrow \mathcal{S}_0^{(2)}$ stands for the matrix-valued
order parameter of liquid crystal molecules, and the scalar function
$P(x ,t):  \T \times\mathbb{R}^+\rightarrow\R$ is the hydrostatic
pressure. We assume that the system \eqref{NSE}--\eqref{Q equ} is
subject to the periodic boundary conditions
\begin{equation}\label{BC}
 \bu(x+e_i,t) = \bu(x,t), \quad Q(x+e_i,t)=Q(x,t), \quad \forall\, (x,t) \in \T\times\mathbb{R}^+,
\end{equation}
where $\{e_i\}_{i=1,2}$ is the canonical orthonormal basis of $\mathbb{R}^2$,
as well as the initial conditions
\begin{equation}\label{IC}
  \bu(x, 0)=\bu_0(x) \ \ \text{with}\ \nabla\cdot{\bu_0}=0, \quad Q(x, 0)=Q_0(x),\quad \forall\,x\in\T.
\end{equation}

The first equation \eqref{NSE} is the Navier--Stokes equation for $\bu$ with highly nonlinear
anisotropic force terms given by the stresses $\sigma^a$, $\sigma^s$,
while the second equation \eqref{incompressibility} simply gives the incompressibility condition. The third equation
\eqref{Q equ} describes the evolution of the $Q$-tensor, in which the left-hand side stands for the upper convective
derivative that represents the rate of change for a small particle of the material
that is rotating and stretching with the fluid flow.
We recall that when $d=2$ the general form of the upper convective derivative is given by
\begin{align}
&\partial_tQ+\bu\cdot\nabla{Q}-S(\nabla{\bu}, Q),\nonumber
\end{align}
where the tensor $S(\nabla{\bu}, Q)$ takes the following form
\begin{align}
&S(\nabla{\bu}, Q)\defeq (\xi A\bu+\omega)\big(Q+\frac12 \Id
\big)+\big(Q+\frac12 \Id \big)(\xi A\bu-\omega)-2\xi\big(Q+\frac12 \Id
\big)\tr(Q\nabla{\bu}),\label{S1}
\end{align}
with $\Id$ being the $2\times 2$ identity matrix and
\begin{align}
 A\bu=\dfrac{\nabla{\bu}+(\nabla{\bu})^{T}}{2},\quad \omega=\dfrac{\nabla{\bu}-(\nabla{\bu})^{T}}{2}\label{Do}
\end{align}
being the symmetric and skew-symmetric parts of the strain rate,
respectively. The parameter $\xi\in \R$ in \eqref{S1} depends on the
molecular details of a given liquid crystal material and it measures
the ratio between the tumbling and the aligning effects that a shear
flow may exert over the liquid crystal directors \cite{PZ11}. We
note that $\xi$ represents nontrivial interactions between the macro
fluid and the micro molecule configuration \cite{WXZ18}. In this
paper, we confine ourselves to the simple case with $\xi=0$ (cf.
\eqref{Q equ}), which is referred to as the co-rotational case in
the literature \cite{PZ12,W15}. On the right-hand side of equation
\eqref{Q equ}, $\HH$ represents the molecular field that is defined
to be minus the Fr\'echet derivative of the free energy
$\mathcal{E}(Q)$ with respect to $Q$ (without any constraint):
\begin{equation}\label{elder}
  \HH\defeq -\dfrac{\delta\E(Q)}{\delta{Q}},
\end{equation}
while $\lambda\in \R$ is a Lagrange multiplier corresponding to the
traceless constraint $\tr(Q)=0$ and $\mu\in\R^{2\times 2}$ is the Lagrange
multiplier corresponding to the constraint on symmetry $Q=Q^T$.
Through a straightforward calculation, we have (see Appendix)
\begin{align}\label{Hdef}
-\HH_{ij}&= -2L_1\Delta{Q_{ij}} -2(L_2+L_3) Q_{ik,kj}-2L_4 Q_{ij,l} Q_{l k,k} -2L_4 Q_{ij,l k}Q_{l k}\non\\
&\quad +L_4 Q_{kl,i} Q_{kl,j} +aQ_{ij}-bQ_{jk}Q_{ki}+c\tr(Q^2)Q_{ij}
\end{align}
as well as
\begin{equation}\label{eqnQExpansion}
\begin{split}
 &(\HH+\lambda\Id+\mu-\mu^{T})_{ij}\\
&\quad =\zeta\Delta{Q_{ij}} +2L_4 (  Q_{ij,l
}Q_{l k})_{,k} -L_4 Q_{kl,i} Q_{kl,j} +
\frac{L_4}{2}|\nabla{Q}|^2 \delta_{ij}-aQ_{ij}-c \tr(Q^2)Q_{ij},
\end{split}
\end{equation}
for $1\leq i,\,j,\, k,\, l \leq 2$, where we denote
\begin{equation}\label{def of zeta}
\zeta\defeq 2L_1+L_2+L_3.
\end{equation}
Returning to equation \eqref{NSE} for $\bu$,
the two highly nonlinear stress terms $\sigma^a$, $\sigma^s$ on its right-hand side are referred to as the anti-symmetric viscous stress and the distortion stress, respectively.
More precisely, they take the following form:
\begin{align}
\sigma^a&=Q(\HH+\lambda\Id+\mu-\mu^{T})-(\HH+\lambda\Id+\mu-\mu^{T}){Q},\label{tensora}\\
\sigma_{ij}^s&=-\frac{\partial\E(Q)}{\partial{Q}_{kl,j}}Q_{kl,i}\nonumber\\
   &=-2\big(L_1Q_{kl,i}Q_{kl,j}+L_2Q_{kj,l}Q_{kl,i}+L_3Q_{kl,l}Q_{kj,i}+L_4Q_{jm}Q_{kl,m}Q_{kl,i}\big),
\label{tensors}
\end{align}
for $1\leq i,\, j,\, k,\, l,\, m\leq 2$.

Throughout this paper, we assume for simplicity that $\nu$, the viscosity of the liquid crystal flow, is  a positive constant.
Moreover, we impose the following basic assumptions on the other coefficients:
\begin{align}
& L_4 \neq 0,\label{aL4}\\
& \kappa\defeq \min\{L_1+L_2, \ L_1+L_3\}>0,\label{coercivity}\\
& c>0. \label{structural-assumption}
\end{align}
We note that \eqref{coercivity} can be viewed as a sufficient condition for the coercivity of the system (see \cite[Appendix C]{IXZ14}),
while the condition \eqref{structural-assumption} ensures that the bulk part of the free
energy density $\mathcal{F}\bulk$ is bounded from below (see \cite{M10, MZ10}).
Moreover, from \eqref{def of zeta} and \eqref{coercivity} we easily infer the following relation
\begin{align}
\zeta\geq 2\kappa>0.\label{zk}
\end{align}
Under the current assumption \eqref{aL4}, the general system \eqref{NSE}--\eqref{Q equ} differs
from those that have been extensively studied in the literature, see for instance, \cite{ADL14, ADL15, CRWX15, DFRSS14,
D15, DAZ, FRSZ14, FRSZ15, GR14, GR15, HD14, LW16, PZ11, PZ12, WZZ15, W15}.
An important feature is that its free energy $\E(Q)$ now contains an unusual cubic term associated with the coefficient $L_4$,
which is physically meaningful but may cause the free energy functional $\E(Q)$ to be unbounded from below.
This fact will lead to essential difficulties in  the mathematical analysis of the system \eqref{NSE}--\eqref{Q equ}.

Our aim in this paper is to prove the existence and uniqueness of global weak solutions
to the Beris--Edwards system \eqref{NSE}--\eqref{IC} with general Laudau--de Gennes free energy under suitable assumptions.
To this end, we introduce  the definition of a weak solution.
\begin{definition}\label{def-weak-solution}
Suppose that $T\in (0, +\infty)$, $\bu_0\in L^2_\sigma(\T, \mathbb{R}^2)$
and $Q_0\in H^1(\T, \mathcal{S}_0^{(2)})\cap L^\infty(\T, \mathcal{S}_0^{(2)})$.
A pair $(\bu, Q)$ satisfying
\begin{align*}
&\bu\in H^1(0, T; (H^1_\sigma(\T))')\cap C([0, T]; L^2_\sigma(\T))\cap L^2(0, T;H_\sigma^1(\T)),\\
&Q\in H^1(0, T; L^{2}(\T))\cap C([0, T]; H^1(\T)) \cap L^2(0, T; H^2(\T)),\\
&Q\in L^\infty(0, T; L^\infty(\T)),\\
&\ \text{with}\ Q\in \mathcal{S}_0^{(2)}\quad\text{a.e. in } \ \T\times (0, T)
\end{align*}
is called a weak solution to the initial boundary problem \eqref{NSE}--\eqref{IC},
if
\begin{align}
& \int_0^T\langle \p_t \bu,\mathbf{v}\rangle_{(H^1)',H^1}\, dt
- \int_0^T\int_{\T}(\bu\otimes \bu):\nabla \mathbf{v}\,dxdt + \nu \int_0^T\int_{\T} \nabla{\bu}:
\nabla{\mathbf{v}}\,dxdt\nonumber\\
&\quad  = -\int_0^T\int_{\T} \left(\sigma^a(Q)+\sigma^s(Q)\right):\nabla \mathbf{v}\,dxdt,\label{eq:1.21}
\end{align}
 for every $\mathbf{v}\in L^2(0, T;H_\sigma^1(\T))$,
 \begin{align}\label{eq:1.19}
&\p_t Q_{ij}+u_k {Q}_{ij,k}+Q_{ik}\omega_{kj}-\omega_{ik}Q_{kj}\\
&\quad =\zeta\Delta{Q_{ij}} +2L_4 (Q_{ij,l}Q_{l k})_{,k} -L_4 Q_{k l,i} Q_{k l,j} +
\frac{L_4}{2}|\nabla{Q}|^2 \delta_{ij}-aQ_{ij}-c \tr(Q^2)Q_{ij}  ,
  \end{align}
a.e. in $\T\times (0, T)$, and the initial condition \eqref{IC} is satisfied. Moreover,
for every $t\in[0,T]$, it holds
\begin{equation}\label{BEL}
\E_{\tot}(t)+\nu\int_0^t\int_{\T}|\nabla{\bu}|^2\,dxds +
\int_0^t\int_{\T}\tr^2(\HH+\lambda\Id+\mu-\mu^{T})\,dxds=\E_{\tot}(0),
\end{equation}
where the total energy $\E_{\tot}$ is given by
\begin{equation}
\E_{\tot}(t)\defeq \frac12\int_{\T}|\bu(t,x)|^2dx+\E(Q(t)).\label{totE}
\end{equation}
\end{definition}
We are now in a position to state the main result of this paper.
\begin{theorem}\label{main-theorem}
Let $T>0$ be arbitrary. Under the assumptions \eqref{aL4}--\eqref{structural-assumption},
there exists a positive constant $\eta$ given by
\begin{align}
\eta= \min\left\{K_1\left(\dfrac{\kappa}{L_4}\right)^2,\ K_2\left(\frac{\zeta}{|L_4|}\right)\sqrt{\nu} \right\},
\label{bdeta}
\end{align}
where $K_1,\, K_2>0$ are two factors depending only on $\T$, such that if in addition,
the coefficients of system \eqref{NSE}--\eqref{Q equ} satisfy
\begin{align}
a\geq -c\eta,\label{coeaa}
\end{align}
then for any initial data $\bu_0\in L^2_\sigma(\T, \mathbb{R}^2)$,
$Q_0\in H^1(\T,\mathcal{S}_0^{(2)})\cap L^\infty(\T,\mathcal{S}_0^{(2)}) $ fulfilling
$$
\|Q_0\|_{L^\infty(\T)}< \sqrt{\eta},
$$
problem \eqref{NSE}--\eqref{IC} admits a unique global weak solution on $\T\times [0,T]$ in the sense of Definition \ref{def-weak-solution}.
 Furthermore, we have the uniform estimate
$$ \|Q(t)\|_{L^\infty(0,T; L^\infty(\T))}\leq \sqrt{\eta}.$$
\end{theorem}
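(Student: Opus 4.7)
The strategy is to construct a two-parameter approximating system $(\bu^{\eps,m}, Q^{\eps,m})$ whose solutions preserve the pointwise bound $\|Q^{\eps,m}(t)\|_{L^\infty(\T)}<\sqrt{\eta}$ for all $t\geq 0$, and then to pass to the limits $\eps\to 0$ and $m\to\infty$. The approximation couples (i) an $m$-dimensional Galerkin truncation of the Navier--Stokes equation on a finite-dimensional subspace of $H^1_\sigma(\T)$, with (ii) a bounded truncation of every bare $Q$-factor appearing in the cubic $L_4$-contributions of the $Q$-equation and of the stresses $\sigma^a,\sigma^s$. Concretely, each such factor is replaced by $Q\,\psi_\eta(|Q|^2)$, where $\psi_\eta\in C^\infty(\RR)$ equals $1$ on $[0,\eta]$ and vanishes outside $[0,2\eta]$, while a mild parabolic $\eps$-regularization of the $Q$-equation supplies extra smoothness. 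After this modification the elastic part of the Landau--de Gennes energy is bounded below jointly with the quartic bulk granted by \eqref{structural-assumption}, so the basic energy inequality globally controls $\|\nabla Q^{\eps,m}\|_{L^2}$, and existence of a global-in-time approximate solution then follows from a Schauder fixed-point argument, using standard ODE theory for the $m$-dimensional velocity system.

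The core a priori estimate is the propagation of $\|Q^{\eps,m}(t)\|_{L^\infty}<\sqrt{\eta}$. I would test the $Q^{\eps,m}$-equation against $2Q^{\eps,m}\phi'(|Q^{\eps,m}|^2)$, where $\phi\in C^2([0,\infty))$ is convex, non-negative, vanishes on $[0,\eta]$ and is strictly positive on $(\eta,\infty)$; since $Q^{\eps,m}$ is symmetric and traceless, this test function lies in the same tensor space. Several algebraic cancellations then reduce the computation to a tractable inequality: (i) the transport term vanishes by $\div\bu^{\eps,m}=0$; (ii) the rotational term $Q\omega-\omega Q$ contracts to zero against $Q$ by antisymmetry of $\omega$; (iii) the Laplacian contribution integrates by parts into a dissipative piece of the form $-2\zeta\int\phi'(|Q|^2)|\nabla Q|^2\,dx$ plus a further non-positive contribution involving $\phi''$; (iv) the isotropic term $\tfrac{L_4}{2}|\nabla Q|^2\delta_{ij}$ cancels because $\tr Q=0$; and (v) the bulk damping contributes $-2\int(a+c|Q|^2)|Q|^2\phi'\,dx\leq 0$ on the support of $\phi'$, thanks to \eqref{coeaa}. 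Crucially, the truncation $\psi_\eta$ makes every remaining cubic $L_4$-term pointwise bounded by $C|L_4|\sqrt{\eta}\,|\nabla Q|^2\phi'$ \emph{without} a circular bootstrap; these terms are absorbed into the diffusion whenever $|L_4|\sqrt{\eta}$ is small compared to $\zeta$, which is precisely what the factor $K_1(\kappa/L_4)^2$ in \eqref{bdeta}, combined with \eqref{zk}, encodes. One concludes $\frac{d}{dt}\int_{\T}\phi(|Q^{\eps,m}|^2)\,dx\leq 0$, and since $\|Q_0\|_{L^\infty}<\sqrt{\eta}$ gives $\int\phi(|Q_0|^2)=0$, the bound propagates and the truncation $\psi_\eta$ becomes transparent, so $Q^{\eps,m}$ actually solves the original $Q$-equation. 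I expect this to be the main technical obstacle, because the anisotropic cubic contributions carry no definite sign and closing the estimate requires both the Stampacchia cut-off and the fine cancellation coming from $\tr Q=0$.

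Once the $L^\infty$ bound is in force, a standard energy estimate, summing the Navier--Stokes equation tested against $\bu^{\eps,m}$ and the $Q$-equation tested against its right-hand side, produces uniform bounds for $\bu^{\eps,m}$ in $L^\infty(0,T;L^2)\cap L^2(0,T;H^1)$ and for $Q^{\eps,m}$ in $L^\infty(0,T;H^1)\cap L^2(0,T;H^2)$, together with suitable bounds on the time derivatives in dual spaces. The second condition in \eqref{bdeta}, namely $K_2(\zeta/|L_4|)\sqrt{\nu}$, enters at this stage: the cubic pieces of $\sigma^a$ and $\sigma^s$, tested against $\nabla\bu^{\eps,m}$, generate coupling terms of size $|L_4|\sqrt{\eta}\,\|\nabla Q^{\eps,m}\|_{L^2}\|\nabla\bu^{\eps,m}\|_{L^2}$ that must be absorbed into $\nu\|\nabla\bu^{\eps,m}\|_{L^2}^2$, and it is exactly this absorption that the $\sqrt{\nu}$-factor encodes. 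Aubin--Lions compactness then yields strong convergence of a subsequence in $L^2(0,T;L^2)$ for $\bu^{\eps,m}$ and in $L^2(0,T;H^1)$ for $Q^{\eps,m}$, which suffices to pass to the limit in every nonlinear term and recover a weak solution in the sense of Definition \ref{def-weak-solution}, inheriting both the bound $\|Q(t)\|_{L^\infty}\leq\sqrt{\eta}$ and the energy identity \eqref{BEL}.

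For uniqueness I would take two weak solutions $(\bu_i,Q_i)_{i=1,2}$ sharing initial data, write the equations for the differences $\bu:=\bu_1-\bu_2$ and $Q:=Q_1-Q_2$, and perform an $L^2\times H^1$ energy estimate. The two-dimensional Ladyzhenskaya interpolation inequality $\|f\|_{L^4}^2\leq C\|f\|_{L^2}\|\nabla f\|_{L^2}$, combined with the uniform bound $\|Q_i\|_{L^\infty}\leq\sqrt{\eta}$ and the $L^2_tH^2_x$ regularity produced in the existence part, allows every quadratic and cubic difference to be controlled by the good viscous and elastic dissipation plus integrable time-dependent factors; a Gronwall argument then concludes $(\bu, Q)\equiv 0$.
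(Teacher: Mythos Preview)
Your approximation scheme (Galerkin for $\bu$ plus a cut-off $\psi_\eta$ in the $Q$-equation) differs from the paper's (a hyperviscosity $\delta(-\Delta)^k$ added to the Navier--Stokes equation, Banach fixed point for $H^2$ data, then density), and your maximum-principle step is essentially the same as the paper's Lemma~3.1. The existence half of your outline is plausible, though you should be aware that the paper explicitly warns that the truncation schemes from the gradient-flow literature do not transfer directly because the energy cancellation structure is delicate; with a Galerkin projection on $\bu$ and a cut-off on $Q$, the identity $\int_{\T}\HH:(\bu\cdot\nabla Q)\,dx + \int_{\T}u_i\,\partial_j\sigma_{ij}^s\,dx=0$ is broken at the approximate level, and you would need to explain carefully how you recover the exact energy identity \eqref{BEL} in the limit.

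There are, however, two genuine gaps.

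\textbf{The role of $\sqrt{\nu}$.} You attribute the constraint $\eta\le K_2(\zeta/|L_4|)\sqrt{\nu}$ to the basic energy estimate, claiming that the cubic pieces of $\sigma^a,\sigma^s$ tested against $\nabla\bu$ must be absorbed into $\nu\|\nabla\bu\|_{L^2}^2$. This is not where it comes from. For the true (non-truncated) system the basic energy law is an exact \emph{identity}: testing \eqref{NSE} with $\bu$ and \eqref{Q equ} with $\HH+\lambda\Id+\mu-\mu^T$ produces the cancellation $\int_{\T}\HH:(\bu\cdot\nabla Q)\,dx+\int_{\T}u_i\partial_j\sigma_{ij}^s\,dx=0$ regardless of the size of $|L_4|$ (this is the content of Proposition~\ref{proposition on energy law}). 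No smallness against $\nu$ is needed for existence; the $\sqrt{\nu}$-condition enters only in the continuous-dependence/uniqueness proof.

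\textbf{Uniqueness at the natural energy level does not close.} Your proposed $L^2_\sigma\times H^1$ estimate for the difference is exactly what the paper says fails. A concrete obstruction: in $\bar\sigma^s$ the cubic term $L_4\bar Q_{jm}(Q_1)_{kl,m}(Q_1)_{kl,i}$ tested against $\partial_j\bar u_i$ forces $\|\bar Q\|_{L^\infty}\|\nabla Q_1\|_{L^4}^2\|\nabla\bar\bu\|_{L^2}$; controlling $\|\bar Q\|_{L^\infty}$ by interpolation between $H^1$ and $H^2$ leaves a Gronwall coefficient of order $\|\Delta Q_1\|_{L^2}^{2+\epsilon}$, which is \emph{not} in $L^1_t$ since weak solutions only give $Q_1\in L^2_tH^2_x$. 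The paper instead drops one order and works in $(H^1_\sigma)'\times L^2$, setting $\bar{\mathbf{w}}:=(-\Delta+I)^{-1}(\bu_1-\bu_2)$. At that level the worst surviving term is
\[
2C^*|L_4|\,\|Q_1\|_{L^\infty}\|Q_2\|_{L^\infty}\,\|\Delta\bar{\mathbf{w}}\|_{L^2}\|\nabla\bar Q\|_{L^2},
\]
and absorbing it simultaneously into $\nu\|\Delta\bar{\mathbf{w}}\|_{L^2}^2$ and $\zeta^2\|\nabla\bar Q\|_{L^2}^2$ via Young's inequality is precisely what forces $\eta\lesssim(\zeta/|L_4|)\sqrt{\nu}$. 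Closing the estimate also relies on a hidden cancellation you do not mention: the principal part of $\bar\sigma^a$ tested against $\nabla\bar{\mathbf{w}}$ produces $-2\zeta\int_{\T}(Q_2\Delta\bar\xi-\Delta\bar\xi Q_2):\bar Q\,dx$ (with $\bar\xi$ the antisymmetric part of $\nabla\bar{\mathbf{w}}$), and this is cancelled exactly by the rotation term $(\bar\omega Q_2-Q_2\bar\omega):\bar Q$ from the $\bar Q$-equation after writing $\bar\omega=(-\Delta+I)\bar\xi$. Without this structural cancellation the lower-order scheme would not close either.
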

\begin{remark}
(1) The restriction on the size of $\|Q_0\|_{L^\infty}$ is  due to the appearance of the cubic term associated with $L_4$.
As we can see from the definition of $\eta$, it is naturally determined by the ratios
$\kappa/|L_4|$ and $\zeta/|L_4|$, which measures the "good" part versus the "bad" part in the second order operator $\HH+\lambda\Id+\mu-\mu^{T}$.
Because the constants $K_1$, $K_2$ are from the application of the H\"older and Young inequalities as well as the elliptic estimates, they may depend on $\T$ at most.

(2) Although the technical assumption \eqref{coeaa} leads to a restriction on the coefficient $a$
(that in general depends linearly on the temperature of the material), it is still able to partially capture the physically
interesting regime of low temperature with $a\leq 0$ (i.e., the "the deep nematic" regime),
in which the isotropic state must lose stability, leaving only the uniaxial nematic states to be stable \cite{MZ10,MN14}.
\end{remark}

The Beris--Edwards system \eqref{NSE}--\eqref{Q equ} has been studied by many authors in recent
years. We recall some relevant results here. Concerning the
simplified system with $\xi=0$ and the elastic energy taking the
simplest form (i.e., $L_2=L_3=L_4=0$), in \cite{PZ12} the authors
obtained the existence of global weak solutions to the Cauchy
problem in $\mathbb{R}^d$ with $d=2,3$, and they proved results on
global regularity as well as weak-strong uniqueness for $d=2$. Some
improved results on the global well-posedness in two dimensions were
established in \cite{D15}, and we also refer to \cite{DMM17} for
certain regularity criterion in $\mathbb{R}^3$. Besides, long-time
behavior of global weak solutions to the Cauchy problem in
$\mathbb{R}^3$ was obtained in \cite{DFRSS14} by the method of
Fourier splitting. On the other hand, initial boundary value
problems subject to various boundary conditions have been
investigated by several authors, see for instance, \cite{ADL15,
GR14, GR15}, where the existence of global weak solutions, existence
and uniqueness of local strong solutions as well as some regularity
criteria were established. For the Beris--Edwards system with three
elastic constants $L_1, L_2, L_3$ but $L_4=\xi=0$, in \cite{HD14}
the authors studied the Cauchy problem in $\RR^3$ and proved the
existence of global weak solutions as well as the existence and
uniqueness of global strong solutions provided that the fluid
viscosity is sufficiently large. Next, concerning the full
Beris--Edward system with a general parameter $\xi\in \mathbb{R}$,
existence of global weak solutions for the Cauchy problem in
$\mathbb{R}^d$ with $d=2, 3$ was established in \cite{PZ11} for
sufficiently small $|\xi|$, while the uniqueness of weak solutions
for $d=2$ was given in \cite{DAZ}. Quite recently, global
well-posedness and long-time behavior of the system in a two
dimensional periodic setting were established in \cite{CRWX15},
without any smallness assumption on $|\xi|$. As far as the initial
boundary value problem subject to inhomogeneous mixed
Dirichlet/Neumann boundary conditions is concerned, in \cite{ADL14}
the authors treated the case with $\xi\neq 0$, $L_2=L_3=L_4=0$ and
proved the existence of global weak solutions as well as local
well-posedness with higher time-regularity for $d=2,3$. The local
well-posedness result was recently improved and extended to the case
with an anisotropic elastic energy in \cite{LW16}. At last, we would
like to mention that some modified versions of the Beris--Edwards
system in terms of its free energy have been investigated in the
literature. In \cite{W15}, the bulk potential \eqref{bulk-energy} is
replaced by a singular potential of Ball--Majumdar type (cf.
\cite{BM10}) that ensures the $Q$-tensor always stays in the
"physical" region. There, in the co-rotational regime $\xi=0$, the
author proved the existence of global weak solutions for $d=2, 3$ as
well as global strong solutions for $d=2$. Besides, some
non-isothermal variants of the Beris--Edwards system were recently
derived and analyzed in \cite{FRSZ14, FRSZ15}. The authors proved existence of
global weak solutions in the case of a singular potential under
periodic boundary conditions for a general parameter $\xi$ and $d=3$.

It is worth pointing out that all the results mentioned above are
obtained under the crucial assumption $L_4=0$. More related to our
problem, the authors in \cite{CSX16, HZ18, CX15, IXZ14} investigated
a gradient flow generated by the free energy \eqref{def of LG
energy} with $L_4\neq 0$. The resulting parabolic equation may be
considered as a fluid-free version of the Beris--Edward system
\eqref{NSE}--\eqref{Q equ} by setting $\bu=\mathbf{0}$ in \eqref{Q
equ}. Although the gradient flow admits a dissipative energy law,
due to the fact that the free energy $\E(Q)$ can be unbounded from
below, in general one cannot expect any useful information on a
priori estimates of its solution. On the other hand, from
\eqref{elastic-energy} we observe that the cubic term associated
with $L_4$ can be controlled by the other positive definite
quadratic terms provided that $\|Q\|_{L^\infty(\T)}$ is suitably
small. It was shown in \cite{IXZ14} that the smallness of
$\|Q\|_{L^\infty(\T)}$ can be preserved during the time evolution
provided that its initial value is small enough. As a consequence,
the usual $H^1$-level information provided by the energy dissipation
in this gradient flow can be effectively used, and the authors of
\cite{IXZ14} were able to construct a suitable approximation scheme
 to prove its global well-posedness. Based on the same idea, a stable numerical scheme was derived in \cite{CSX16},
 which also yields a different approach for the existence of global weak solutions for $d=2$ under the smallness assumption of
$\|Q_0\|_{L^\infty(\T)}$.

For our Beris--Edward system \eqref{NSE}--\eqref{Q equ} with $L_4\neq 0$, the
analysis turns out to be much more involved due to the complicated
interactions between the fluid motion and the molecular alignment.
Nevertheless, in the co-rotational case $\xi=0$, we are able to show that
the preservation of the smallness of $\|Q_0\|_{L^\infty(\T)}$ as in
\cite{CSX16, CX15, IXZ14} is still kept during the time evolution under
additional effects of advection and rotation due to the fluid (see
Lemma \ref{prop on maximum principle} below). Because of
the highly nonlinear coupling between the macroscopic fluid flow and
the microscopic orientational configuration of liquid crystal
molecules in our system \eqref{NSE}--\eqref{Q equ}, there are several extra
difficulties in the mathematical analysis, and none of those
approximate schemes utilized in \cite{CSX16, CX15, IXZ14} can be
applied to prove the existence of global weak solutions.
Alternatively, we introduce a regularized system (see
\eqref{NSEa}--\eqref{Q equa} below)
 where a higher-order regularizing term $\delta(-\Delta)^k$ with $\delta>0$,
$k\geq 4$ is added to the Navier--Stokes equation \eqref{NSE}.
It yields certain improved regularity of the velocity field $\bu$
and helps to guarantee the preservation of $L^\infty$-norm of the $Q$-tensor in this
regularized system. Assuming a slightly more regular initial datum that belongs to $L^2_\sigma(\T)\times H^2(\T)$,
local well-posedness of the regularized system can be proved by
a nonstandard application of Banach's fixed-point theorem, where
a suitable nonlinear mapping $\mathcal{Y}$ on the space $L^2(0,T;H^1(\T))$
is constructed in a delicate way (see \eqref{def of F} below).
In this process, the smallness of $\|Q_0\|_{L^\infty(\T)}$ plays an important role
in proving the contractivity of $\mathcal{Y}$.
Then combining the key property on preservation of $L^\infty$-norm of the $Q$-tensor together with the energy method,
we are able to derive uniform-in-time estimates that enable us to extend
the local solution to be a global one that is defined on an arbitrary time interval $[0,T]$.
Based on the dissipative energy law (see Proposition \ref{proposition on energy law}) and
using the compactness argument, we are able to prove the existence of global weak solutions
to our Beris--Edward system \eqref{NSE}--\eqref{Q equ} by passing to the limit as $\delta\rightarrow 0^+$.
Finally, global existence result for general initial data $(\bu_0, Q_0)\in L^2_\sigma(\T)\times (H^1(\T)\cap L^\infty(\T))$
can be achieved by employing a density argument.
Because of the highly nonlinear coupling terms related to the stress tensors $\sigma^a$ and $\sigma^s$,
uniqueness of global weak solutions to system \eqref{NSE}--\eqref{Q equ} is a nontrivial issue even in two spatial dimensions.
Some higher-order terms cannot be controlled if we perform energy estimates for the difference
of two solutions at the level of natural energy space, say $(\bu, Q)\in L^2_\sigma(\T)\times H^1(\T)$.
Inspired by \cite{LLT13, LTX16}, we choose to prove a continuous dependence result with respect to the
initial data in the lower-order energy space $(H^1_\sigma(\T))'\times L^2(\T)$
 (see Lemma \ref{prop on continuous dependence}), which immediately yields the uniqueness of global weak solutions.
Again in the proof, the preservation of $L^\infty$-norm of the
$Q$-tensor plays an essential role and furthermore, a special
cancellation relation hidden in the coupling structure has to be exploited.

The rest of this paper is organized as follows. In Section 2, we
state our notational conventions and then present some technical
lemmas. In Section 3, we derive the dissipative energy law satisfied
by the global weak solution to problem \eqref{NSE}--\eqref{IC} and a
weak maximum principle for the $Q$-equation, which yields the
preservation of $\|Q\|_{L^\infty}$ along time evolution. In Section 4, we introduce the
regularized problem and prove its global well-posedness with
slightly more regular initial data, i.e., $(\bu_0, Q_0)\in
L^2_\sigma(\T)\times H^2(\T)$. In the final Section 5, we complete
the proof of Theorem \ref{main-theorem} by passing to the limit as
$\delta\to 0^+$ in the approximate problem together with a density
argument for the initial data. Moreover, the continuous dependence
result is established. In the Appendix, we present some detailed
computations mentioned in the previous sections.

\section{Preliminaries}
\setcounter{equation}{0}

\subsection{Functional settings and notations}

Let $X$ be a real Banach space with norm $\|\cdot\|_X$ and $X'$ be its dual space.
By $\langle\cdot,\cdot\rangle_{X',X}$ we indicate the duality product between $X$ and $X'$.
We denote by $L^p(\T, M)$, $W^{m,p}(\T, M)$, $1\leq p\leq +\infty$, $m\in \mathbb{N}$,
the usual Lebesgue and Sobolev spaces defined on the torus $\T$ for $M$-valued functions
(e.g., $M=\mathbb{R}$, $M=\mathbb{R}^2$ or $M=\mathbb{R}^{2\times 2}$)
that are in $L^p_{loc}(\mathbb{R}^2)$ or $W^{m,p}_{loc}(\mathbb{R}^2)$
and periodic in $\T$, with norms denoted by $\|\cdot\|_{L^p}$, $\|\cdot\|_{W^{m,p}}$, respectively.
For $p=2$, we simply denote $H^m(\T)= W^{m,2}(\T)$ with norm $\|\cdot \|_{H^m}$. In particular for $m=0$,
we have $H^0(\T)=L^2(\T)$ and the inner product on $L^2(\T)$ will be denoted by $(\cdot, \cdot)_{L^2}$.
For simplicity, we shall not distinguish functional spaces when scalar-valued,
vector-valued or matrix-valued functions are involved if they are clear from the context.

For arbitrary vectors $\mathbf{u}, \mathbf{v}\in\R^2$,
we denote by $\mathbf{u}\cdot \mathbf{v}$ the scalar product in $\R^2$,
while for arbitrary matrices $A, B\in\mathbb{R}^{2\times 2}$,
the Frobenius product between $A$ and $B$ is defined by $A: B=\tr(A^T B)$.
For any matrix $Q\in \R^{2\times 2}$, its Frobenius norm is given by
$|Q| =\sqrt{\tr(Q^T Q)}=\sqrt{Q_{ij}Q_{ij}}$.
Concerning the norms for spatial derivatives of matrices, we
denote $|\nabla Q|^2(x)=\p_kQ_{ij}(x)\p_kQ_{ij}(x)$ and more generally for the multi-index $\alpha=(\alpha_1,\alpha_2)\in (\mathbb{N}\cup\{0\})^2$ with $|\alpha|=\alpha_1+\alpha_2=l\in \mathbb{N}$, we denote
$|\p^{\alpha} Q|^2(x)=\sum_{|\alpha|= l}\p^{\alpha}{Q}_{ij}(x)\p^{\alpha}{Q}_{ij}(x)$, where $\partial^\alpha=\frac{\partial^{|\alpha|}}{\partial_{1}^{\alpha_1}\partial_{2}^{\alpha_2}}$ for $\alpha\neq (0,0)$ and we agree that $\partial^{(0,0)}Q=Q$.
Then Sobolev spaces for $Q$-tensors will be defined in terms of the above norms. For instance,
\begin{align*}
&L^2(\T, \mathcal{S}_0^{(2)}) = \Big\{Q: \T\rightarrow \mathcal{S}_0^{(2)},\ \int_{\T}|Q(x)|^2\,dx < \infty \Big\},\\
&H^m(\T, \mathcal{S}_0^{(2)}) = \Big\{Q: \T\rightarrow \mathcal{S}_0^{(2)},\ \sum_{l=0}^m\sum_{|\alpha|=l}\int_{\T} |\partial^\alpha Q(x)|^2 \,dx < \infty \Big\},\quad m\in \mathbb{N}.
\end{align*}
 For any
$2\times 2$ differentiable matrix-valued function $Q=(Q_{ij})$, we denote the partial derivative of
its $ij$-component by $Q_{ij,k}=\p_k Q_{ij}$ and its divergence by $(\nabla \cdot Q)_i=\p_j Q_{ij}$, $1\leq i, j\leq 2$.

Next, we recall the well-established functional settings for periodic solutions
to the Navier--Stokes equations (see e.g., \cite{T95}):
\begin{align}
L^2_\sigma(\T)=\big\{\mathbf{v} \in L^2(\T, \R^2) ,\ \nabla\cdot
\mathbf{v}=0\big\},\quad H^m_\sigma =\big\{\mathbf{v}\in
H^m(\T,\R^2), \; \nabla\cdot \mathbf{v}=0\big\},
\end{align}
where $L^2_\sigma(\T)$ denotes the usual space of solenoidal vector field.
We note that in the spatial periodic setting, the Stokes operator is simply given by
 \begin{equation}
 \mathbf{S} \bu=-\Delta \bu, \quad  \forall\, \bu\in D(\mathbf{S})\defeq\{\bu\in L^2_\sigma(\T), \, \Delta \bu\in L^2_\sigma(\T)\}=H^2_\sigma(\T).\non
\end{equation}
The operator $\mathbf{S}$ can be seen as an unbounded positive linear self-adjoint operator on $L^2_\sigma(\T)$ and
it becomes an isomorphism from $\dot{H}^2_\sigma(\T)$ onto $\dot{L}^2_\sigma(\T)$, where dot means a function space with the constraint of zero mean \cite{T95}.

Throughout this paper, we denote by $C$ a generic constant that may depend on $\nu$, $L_i$, $a$, $b$, $c$, $\T$ and the
initial data $(\mathbf{u}_0, Q_0)$, whose value is allowed to vary on
occurrence. Specific dependence will be pointed out explicitly
if necessary.

\subsection{Useful lemmas}

Below we present some preliminary results that will be used in the subsequent
proofs. First of all, the following algebraic lemma turns to be
useful:
\begin{lemma}\label{cancel}
For any symmetric tensors $Q, M\in \R^{2\times2}$ and a vector field $\bu\in \R^2$, it holds
\begin{equation}\label{cancellation}
(QM-MQ): \nabla \bu =(Q\omega-\omega Q): M,
\end{equation}
where $\omega$ is given by \eqref{Do}.
\end{lemma}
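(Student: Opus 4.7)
The plan is to exploit the symmetry/antisymmetry structure of the two sides and then reduce everything to trace manipulations using cyclic invariance. The identity is purely algebraic, so no analysis is needed.

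First I would decompose $\nabla\bu = A\bu + \omega$ into its symmetric and antisymmetric parts as in \eqref{Do}. The matrix $QM-MQ$ is antisymmetric: since $Q$ and $M$ are symmetric,
\[
(QM-MQ)^T = M^T Q^T - Q^T M^T = MQ - QM = -(QM-MQ).
\]
Because the Frobenius inner product of a symmetric matrix and an antisymmetric matrix vanishes, we immediately get $(QM-MQ):A\bu = 0$, so the left-hand side of \eqref{cancellation} reduces to
\[
(QM-MQ):\nabla\bu = (QM-MQ):\omega.
\]

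Next I would rewrite both sides as traces and use the cyclic property of the trace. Recalling $X:Y = \tr(X^T Y)$, the reduced left-hand side equals
\[
\tr\bigl((QM-MQ)^T\omega\bigr) = \tr\bigl((MQ-QM)\omega\bigr) = \tr(MQ\omega) - \tr(QM\omega) = \tr(Q\omega M) - \tr(\omega Q M),
\]
where in the last step I used $\tr(MQ\omega)=\tr(Q\omega M)$ and $\tr(QM\omega)=\tr(\omega QM)$. On the other hand, the right-hand side of \eqref{cancellation} is
\[
(Q\omega-\omega Q):M = \tr\bigl((Q\omega-\omega Q)^T M\bigr) = \tr\bigl((-\omega Q + Q\omega)M\bigr) = \tr(Q\omega M) - \tr(\omega Q M),
\]
using $\omega^T=-\omega$ and $Q^T=Q$. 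The two expressions coincide, which proves the identity.

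There is no real obstacle here; the only point to be careful about is keeping track of transposes when converting between the Frobenius product $X:Y=\tr(X^TY)$ and trace expressions, which is why I would explicitly verify that $QM-MQ$ is antisymmetric (using symmetry of both $Q$ and $M$, a fact which, incidentally, is the only place the hypothesis $M=M^T$ is used). The identity would fail for a generic non-symmetric $M$, as one sees by checking that the cancellation of the $A\bu$ contribution in the first step relies on $QM-MQ$ being antisymmetric.
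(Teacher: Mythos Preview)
Your proof is correct and follows essentially the same approach as the paper: decompose $\nabla\bu=A\bu+\omega$, use the antisymmetry of $QM-MQ$ to kill the $A\bu$ contribution, and then match the remaining terms via the cyclic property of the trace. The paper carries out the last step partly in index notation before passing to traces, whereas you work directly with $X:Y=\tr(X^TY)$ throughout, but the argument is the same.
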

\begin{proof}
Recall the definitions of $A\bu$ and $\omega$ given by \eqref{Do}.
Using the symmetry of $Q,\,M,\,A\bu$, the anti-symmetry of $\omega$ and property of the trace of matrix,
we infer from a direct computation that
 \begin{align*}
    (Q  M- M Q):\nabla \bu
    & =  (Q M- M Q):(A \bu+\omega)  = (Q M- M Q) : \omega\\
    & = Q_{ij}M_{jk}\omega_{ik}-M_{ij}Q_{jk}\omega_{ik} = -Q_{ij}M_{jk}\omega_{ki}
    +M_{ij}Q_{jk}\omega_{ki}\\
    & = -\tr (QM\omega)+\tr (MQ\omega)=-\tr (\omega QM)+\tr (Q\omega M)\\
    & =(Q\omega- \omega Q): M,
 \end{align*}
 which yields the required assertion.
\end{proof}

Next, we recall some standard elliptic estimates in $\T$:
\begin{lemma}\label{ellipes}
It holds that
\begin{align}
\label{H2}
\|Q\|_{H^2}\,&\leq\,C\left(\|\Delta Q\|_{L^2}\,+\,\|Q\|_{L^2}\right),\quad\ \  \forall\, Q\in H^2(\T),\\
\label{H3}
\|Q\|_{H^3}\,&\le\,C\left(\|\nabla\Delta Q\|_{L^2}\,+\,\|Q\|_{L^2}\right),\quad \forall\, Q\in H^3(\T),
\end{align}
 where the positive constant $C$ only depends on $\T$.
\end{lemma}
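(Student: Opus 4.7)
The plan is to prove both elliptic estimates by Fourier expansion on the torus, since on $\T = (0,1)^2$ the Laplacian diagonalizes in the trigonometric basis and the inequalities reduce to elementary pointwise comparisons between polynomials in $|k|^2$. Periodicity eliminates every boundary term, so no refined potential-theoretic argument is needed; these are a model instance of the standard elliptic regularity for the operator $-\Delta + \mathrm{Id}$ on the torus.

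Expanding $Q(x) = \sum_{k \in \mathbb{Z}^2} \hat{Q}(k)\, e^{2\pi i k \cdot x}$, the Parseval identity yields, with equivalence constants depending only on $\T$,
\[
\|Q\|_{H^m}^2 \simeq \sum_{k \in \mathbb{Z}^2} (1+|k|^2)^m\, |\hat{Q}(k)|^2,\quad \|\Delta Q\|_{L^2}^2 = (2\pi)^4 \sum_{k} |k|^4 |\hat{Q}(k)|^2,\quad \|\nabla \Delta Q\|_{L^2}^2 = (2\pi)^6 \sum_{k} |k|^6 |\hat{Q}(k)|^2.
\]
For \eqref{H2}, I would invoke the elementary bound $(1+|k|^2)^2 \leq 2(1 + |k|^4)$, verified by splitting according to whether $|k| \leq 1$ or $|k| \geq 1$; multiplying by $|\hat{Q}(k)|^2$ and summing over $k$ gives $\|Q\|_{H^2}^2 \leq C(\|Q\|_{L^2}^2 + \|\Delta Q\|_{L^2}^2)$. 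For \eqref{H3}, the analogous bound $(1+|k|^2)^3 \leq 8(1 + |k|^6)$, obtained by the same two-case split, yields $\|Q\|_{H^3}^2 \leq C(\|Q\|_{L^2}^2 + \|\nabla \Delta Q\|_{L^2}^2)$. Taking square roots and absorbing numerical factors into $C$ completes the proof.

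No substantial obstacle arises beyond bookkeeping of constants, and the claim that $C$ depends only on $\T$ reflects merely the factors $(2\pi)^j$ from the Fourier multipliers and the dimensional numerics of the two polynomial inequalities. As a purely PDE alternative, iterated integration by parts on the periodic cell gives the clean identities $\|D^2 Q\|_{L^2}^2 = \|\Delta Q\|_{L^2}^2$ and $\|D^3 Q\|_{L^2}^2 = \|\nabla \Delta Q\|_{L^2}^2$, while $\|\nabla Q\|_{L^2}^2 = -(Q, \Delta Q)_{L^2}$ combined with Cauchy--Schwarz and Young's inequalities supplies the remaining lower-order control; this route avoids Fourier analysis entirely at the cost of slightly more index manipulation.
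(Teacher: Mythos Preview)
Your proposal is correct. The paper itself does not supply a proof of this lemma: it simply announces the result as ``standard elliptic estimates in $\T$'' and moves on, so your Fourier-series argument (and the integration-by-parts alternative you sketch) is a valid way to fill in what the paper leaves as background. Both of your routes are standard for the periodic setting, and the constants indeed depend only on the torus through the Fourier normalization.
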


Besides, we present some interpolation inequalities.

\begin{lemma}
For any $f\in H^2(\T)$, the following inequalities are valid:
\begin{align}
& \|\nabla f\|_{L^4}^2\leq 3 \|f\|_{L^\infty}\|\Delta f\|_{L^2},\label{eq:1.27}\\
& \| \nabla f\|_{L^4}^2\leq C\| \nabla f\|_{L^2}\big(\|\Delta f\|_{L^2}+\|f\|_{L^2}\big). \label{eq:1.30}
\end{align}
Moreover, it holds
\begin{equation}\label{eq:1.31}
  \| \nabla f\|_{L^\infty}^2\leq C\| \nabla f\|_{L^2}\big(\|\nabla\Delta f\|_{L^2}+ \| f\|_{L^2}\big),\quad \forall\, f\in H^3(\T).
\end{equation}
In the above inequalities, the positive constant $C$ only depends on $\T$.
Besides, if $V\hookrightarrow H \hookrightarrow V'$ is a Gelfand-triple, then we have
\begin{align}
\|f\|_{C([0,T];H)}^2\leq 2\big(\|f\|_{H^1(0,T;V')}\|f\|_{L^2(0,T;V)}+\|f(\cdot, 0)\|_{H}^2\big).\label{Gel}
\end{align}
\end{lemma}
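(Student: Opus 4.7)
For inequality \eqref{eq:1.27}, the plan is to integrate by parts in $\int_{\T}|\nabla f|^4\, dx$ after writing the integrand as $|\nabla f|^2\,\partial_i f\,\partial_i f$, transferring the outer derivative off the last factor $\partial_i f$ so as to produce a factor of $f$ together with two derivative terms. The first term $-\int_\T f |\nabla f|^2\Delta f\,dx$ is bounded by $\|f\|_{L^\infty}\|\nabla f\|_{L^4}^2\|\Delta f\|_{L^2}$ via H\"older. The second term, after using $\partial_i(|\nabla f|^2)=2\partial_j f\,\partial_{ij} f$, contains $\partial_i f\,\partial_j f\,\partial_{ij} f$, which is pointwise dominated by $|\nabla f|^2|D^2 f|$ (Cauchy--Schwarz on the quadratic form with the Hessian); H\"older then yields $2\|f\|_{L^\infty}\|\nabla f\|_{L^4}^2\|D^2 f\|_{L^2}$. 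The crucial step is the identity $\|D^2 f\|_{L^2}=\|\Delta f\|_{L^2}$, valid on $\T$ by two integrations by parts, which matches the Hessian factor to the single Laplacian norm. Collecting terms and dividing through by $\|\nabla f\|_{L^4}^2$ yields exactly the stated constant $3$. This is the most delicate part, since the sharp constant hinges on this periodic identity.

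For inequality \eqref{eq:1.30}, I will apply the two-dimensional Ladyzhenskaya-type Gagliardo--Nirenberg inequality $\|g\|_{L^4}^2\leq C\|g\|_{L^2}\|g\|_{H^1}$ componentwise to $g=\nabla f$, then invoke the elliptic estimate \eqref{H2} of Lemma~\ref{ellipes} to bound $\|D^2 f\|_{L^2}\leq C(\|\Delta f\|_{L^2}+\|f\|_{L^2})$, using $\|\nabla f\|_{L^2}^2\leq\|f\|_{L^2}\|\Delta f\|_{L^2}$ to absorb the lower-order Dirichlet term into the right-hand side.

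For inequality \eqref{eq:1.31}, the same strategy works at one derivative higher: I will apply the two-dimensional Agmon inequality $\|g\|_{L^\infty}^2\leq C\|g\|_{L^2}\|g\|_{H^2}$ (a consequence of Sobolev embedding plus interpolation on the torus) to $g=\nabla f$ and use \eqref{H3} to control $\|\nabla f\|_{H^2}\leq\|f\|_{H^3}\leq C(\|\nabla\Delta f\|_{L^2}+\|f\|_{L^2})$. Both \eqref{eq:1.30} and \eqref{eq:1.31} are routine combinations of standard two-dimensional interpolation with the elliptic regularity already recorded in Lemma~\ref{ellipes}.

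Finally, \eqref{Gel} is the classical Gelfand-triple identity: given $f\in H^1(0,T;V')\cap L^2(0,T;V)$, the real-valued map $t\mapsto\|f(t)\|_H^2$ is absolutely continuous with $\frac{d}{dt}\|f(t)\|_H^2=2\langle f'(t),f(t)\rangle_{V',V}$. Integrating on $[0,t]$, applying the duality pairing pointwise followed by the Cauchy--Schwarz inequality in time, and then taking the supremum over $t\in[0,T]$ gives $\|f\|_{C([0,T];H)}^2\leq\|f(\cdot,0)\|_H^2+2\|f'\|_{L^2(0,T;V')}\|f\|_{L^2(0,T;V)}$. The trivial embedding $\|f'\|_{L^2(V')}\leq\|f\|_{H^1(0,T;V')}$ then delivers the stated form. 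No step here is delicate; the only nontrivial constant in the whole lemma is the $3$ in \eqref{eq:1.27}.
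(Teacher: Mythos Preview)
Your proposal is correct and follows essentially the same strategy as the paper: integration by parts plus H\"older for \eqref{eq:1.27}, Ladyzhenskaya and Agmon inequalities combined with the elliptic estimates of Lemma~\ref{ellipes} for \eqref{eq:1.30}--\eqref{eq:1.31}, and the standard Lions--Magenes argument for \eqref{Gel}. The only cosmetic difference is in \eqref{eq:1.27}: the paper works componentwise with the identity $|\partial_j f|^4=\partial_j(f\,\partial_j f\,|\partial_j f|^2)-3f\,\partial_j^2 f\,|\partial_j f|^2$ and then uses $\int|\partial_1^2 f|^2+|\partial_2^2 f|^2\leq\int|\Delta f|^2$, whereas you treat $|\nabla f|^4$ vectorially and invoke the equivalent periodic identity $\|D^2 f\|_{L^2}=\|\Delta f\|_{L^2}$; both routes land on the same constant~$3$.
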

\begin{proof}
Concerning \eqref{eq:1.27}, we observe the identity
$$
|\p_j f|^4=\p_j(f\p_j f|\p_j f|^2)-3 f\p_j\p_j f|\p_j f|^2, \quad \forall\, f\in H^1(\T),
$$
and
\begin{align}
\int_{\T} |\partial_1\partial_2f|^2 dx=\int_{\T} (\partial_1\partial_1f)(\partial_2\partial_2f) dx,\quad \forall\, f\in H^2(\T).
\label{H2L2}
\end{align}
Then by H\"{o}lder's inequality, using integration by parts, we have  that
\begin{align*}
\|\nabla{f}\|_{L^4}^4&=\int_{\T}\sum_{j=1}^2|\partial_jf|^4dx=-3\int_{\T}\sum_{j=1}^2f\p_j\p_j
f|\p_j f|^2dx\\
&\leq
3\|f\|_{L^\infty}\|\nabla{f}\|_{L^4}^2\sqrt{\int_{\T}|\partial_1\partial_1f|^2+|\partial_2\partial_2f|^2dx}\\
&\leq
3\|f\|_{L^\infty}\|\nabla{f}\|_{L^4}^2\sqrt{\int_{\T}|\partial_1\partial_1f|^2+2|\partial_1\partial_2f|^2+|\partial_2\partial_2f|^2dx}\\
&=3\|f\|_{L^\infty}\|\nabla{f}\|_{L^4}^2\|\Delta{f}\|_{L^2},
\end{align*}
which yields the inequality \eqref{eq:1.27}. Inequalities
\eqref{eq:1.30} and \eqref{eq:1.31} follow from the classical Ladyshenskaya and Agmon
inequalities as well as the elliptic estimates in Lemma \ref{ellipes}. For the last inequality \eqref{Gel},
we refer to \cite[Section 2.1]{AB09} (see also \cite[Lemma 2.6]{ADL14}).
\end{proof}

We end this section by deriving some estimates for an elliptic problem related to \eqref{eqnQExpansion}.
\begin{lemma}\label{lemma1.0}
Consider the problem
\begin{align}
&\HH+\lambda\Id+\mu-\mu^{T}=g,\quad \forall\, x \in \T,\label{eq:1.13a}\\
&Q(x+e_i,t)=Q(x,t), \quad \forall\, x \in \T, \ \ i=1,2,\label{eq:1.13b}
\end{align}
where the left-hand side of \eqref{eq:1.13a} is given by \eqref{eqnQExpansion}.
Then for any constant $\eta$ satisfying
$$0<\eta< \frac{1}{121}\left(\frac{\zeta}{L_4}\right)^2,$$
if the solution $Q$ to problem \eqref{eq:1.13a}--\eqref{eq:1.13b} fulfills
$\|Q\|_{L^\infty(\T)}\leq \sqrt{\eta}$, then the following estimate holds
\begin{equation*}
\|Q\|_{H^2}\leq C(\|g\|_{L^2}+1)
\end{equation*}
     where the positive constant $C$ only depends on $a,\, c,\, L_4,\, \zeta,\, \eta$ and $\T$.
\end{lemma}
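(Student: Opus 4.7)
The plan is to test the elliptic system \eqref{eq:1.13a} against $\Delta Q$ in $L^2(\T)$, treating $\zeta\Delta Q$ as the principal part and absorbing the $L_4$ cubic terms using the smallness hypothesis on $\|Q\|_{L^\infty}$. Concretely, I would multiply the identity
\[
\zeta\Delta Q_{ij}+2L_4(Q_{ij,l}Q_{lk})_{,k}-L_4 Q_{kl,i}Q_{kl,j}+\tfrac{L_4}{2}|\nabla Q|^2\delta_{ij}-aQ_{ij}-c\,\tr(Q^2)Q_{ij}=g_{ij}
\]
by $\Delta Q_{ij}$, integrate over $\T$, and obtain $\zeta\|\Delta Q\|_{L^2}^2$ on the left, $\int g:\Delta Q$ on the right (handled by Cauchy--Schwarz and Young), plus six error terms to estimate. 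Note that since $\tr Q=0$ implies $\tr(\Delta Q)=0$, the term $\tfrac{L_4}{2}|\nabla Q|^2\delta_{ij}\Delta Q_{ij}$ integrates to zero; this small cancellation is the first thing I would record.

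Next I would show that the remaining three $L_4$ cubic contributions are bounded by $11\,L_4\sqrt{\eta}\,\|\Delta Q\|_{L^2}^2$, which is strictly less than $\zeta\|\Delta Q\|_{L^2}^2$ precisely under the hypothesis $\eta<\tfrac{1}{121}(\zeta/L_4)^2$. Expanding $2L_4(Q_{ij,l}Q_{lk})_{,k}=2L_4 Q_{ij,lk}Q_{lk}+2L_4 Q_{ij,l}Q_{lk,k}$, the first piece is bounded by $2L_4\|Q\|_{L^\infty}\|\nabla^2 Q\|_{L^2}\|\Delta Q\|_{L^2}\le 2L_4\sqrt{\eta}\,\|\Delta Q\|_{L^2}^2$, using the periodic identity $\|\nabla^2 Q\|_{L^2}=\|\Delta Q\|_{L^2}$ coming from \eqref{H2L2}. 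For the remaining two pieces I would apply H\"older with $\|\nabla Q\|_{L^4}^2\le 3\|Q\|_{L^\infty}\|\Delta Q\|_{L^2}\le 3\sqrt{\eta}\,\|\Delta Q\|_{L^2}$ from \eqref{eq:1.27}, yielding contributions bounded by $6L_4\sqrt{\eta}\,\|\Delta Q\|_{L^2}^2$ and $3L_4\sqrt{\eta}\,\|\Delta Q\|_{L^2}^2$ respectively; summing gives the factor $11$ that matches $1/121$.

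For the bulk terms, an integration by parts converts $-a\int Q:\Delta Q$ into $a\|\nabla Q\|_{L^2}^2$, which is already good if $a\ge 0$ and otherwise is bounded by $|a|\|Q\|_{L^2}\|\Delta Q\|_{L^2}\le |a|\sqrt{\eta}\,\|\Delta Q\|_{L^2}$ via $\|\nabla Q\|_{L^2}^2\le\|Q\|_{L^2}\|\Delta Q\|_{L^2}$ and the $L^\infty$ bound together with $|\T|=1$. The term $-c\,\tr(Q^2)Q$ is handled pointwise by $|c\,\tr(Q^2)Q|\le c\eta|Q|$, giving an $L^2$ bound of order $c\eta^{3/2}$, then Cauchy--Schwarz against $\Delta Q$. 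Combining everything and applying Young's inequality to split $\|g\|_{L^2}\|\Delta Q\|_{L^2}$ and $|a|\sqrt{\eta}\,\|\Delta Q\|_{L^2}$ yields
\[
(\zeta-11L_4\sqrt{\eta}-\epsilon)\|\Delta Q\|_{L^2}^2\le C_\epsilon(\|g\|_{L^2}^2+1),
\]
after which choosing $\epsilon$ small and using the gap from the hypothesis on $\eta$ gives $\|\Delta Q\|_{L^2}\le C(\|g\|_{L^2}+1)$; then \eqref{H2} of Lemma~\ref{ellipes} together with the trivial bound $\|Q\|_{L^2}\le\sqrt{\eta}$ produces the desired $H^2$ estimate.

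The one delicate point I expect to require care is the bookkeeping that yields the sharp constant $11$: in particular, one must use the equality $\|\nabla^2 Q\|_{L^2}=\|\Delta Q\|_{L^2}$ on the torus (rather than a weaker elliptic estimate with lower-order remainder) in the $Q_{ij,lk}Q_{lk}$ contribution, and exploit the tracelessness cancellation for the $|\nabla Q|^2\delta_{ij}$ term — without either of these, the admissible range of $\eta$ shrinks and the threshold $1/121$ would be lost.
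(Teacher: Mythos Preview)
Your proposal is correct and follows essentially the same route as the paper: test \eqref{eq:1.13a} against $\Delta Q$, use $\|\nabla Q\|_{L^4}^2\le 3\|Q\|_{L^\infty}\|\Delta Q\|_{L^2}$ from \eqref{eq:1.27} together with $\|\nabla^2 Q\|_{L^2}=\|\Delta Q\|_{L^2}$ on the torus to absorb the three $L_4$ terms with total factor $11|L_4|\sqrt{\eta}$, and finish with Young's inequality and \eqref{H2}. The only cosmetic differences are that the paper groups the two gradient--gradient terms as $3|L_4|\|\nabla Q\|_{L^4}^2\|\Delta Q\|_{L^2}$ before applying \eqref{eq:1.27} (arriving at $2+9=11$ rather than your $2+6+3$), handles the bulk term $aQ$ by the direct bound $|a|\|Q\|_{L^\infty}\|\Delta Q\|_{L^2}$ without the sign split, and leaves the tracelessness cancellation of $\tfrac{L_4}{2}|\nabla Q|^2\delta_{ij}\Delta Q_{ij}$ implicit.
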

\begin{proof}
Taking the inner product of equation \eqref{eq:1.13a} with
$\Delta{Q}$  and integrating over $\T$, using the expression \eqref{eqnQExpansion},
H\"older's inequality as well as \eqref{eq:1.27}, we obtain that
\begin{align*}
\zeta\|\Delta{Q}\|_{L^2}^2&\leq
2|L_4|\int_{\T}\big|Q_{lk}Q_{ij,lk}\Delta{Q}_{ij}\big| dx+2|L_4|\int_{\T}\big|Q_{ij,l}Q_{lk,k}\Delta{Q}_{ij}\big|dx
\\
&\quad+|L_4|\int_{\T}\big|Q_{kl,i}Q_{kl,j}\Delta{Q}_{ij}\big|dx+\int_{\T}\Big|\big[aQ_{ij}+c\tr(Q^2)Q_{ij}\big]\Delta{Q}_{ij}\Big|dx\nonumber\\
&\quad +\int_{\T}|g_{ij}\Delta{Q}_{ij}|dx\\
&\leq
2|L_4|\|Q\|_{L^\infty}\|\Delta{Q}\|_{L^2}^2+3|L_4|\|\nabla{Q}\|_{L^4}^2\|\Delta{Q}\|_{L^2}\nonumber\\
&\quad +(|a|\|Q\|_{L^\infty}+c\|Q\|_{L^\infty}^3+\|g\|_{L^2})\|\Delta{Q}\|_{L^2}\nonumber\\
&\leq 11|L_4|\sqrt{\eta}\|\Delta{Q}\|^2 +(|a|\|Q\|_{L^\infty}+c\|Q\|_{L^\infty}^3+\|g\|_{L^2})\|\Delta{Q}\|_{L^2}.
\end{align*}
Using the assumption on $\eta$ and Young's inequality, we get
\begin{align*}
\zeta\|\Delta{Q}\|_{L^2}^2 & \leq 11|L_4|\sqrt{\eta}\|\Delta{Q}\|^2+ \frac{1}{2}\big(\zeta-11|L_4|\sqrt{\eta}\big)\|\Delta{Q}\|_{L^2}^2\non\\
&\quad +\frac12\big(\zeta-11|L_4|\sqrt{\eta}\big)^{-1}\big(|a|\eta^\frac12+c\eta^\frac{3}{2}+\|g\|_{L^2}\big)^2,
\end{align*}
which together with Lemma \ref{ellipes} easily yields the conclusion.
\end{proof}
\begin{remark}
Obviously, $\eta$ can be chosen arbitrary large as $|L_4|\to 0^+$. Indeed when $L_4=0$, we see from the above
proof that no smallness assumption on the $L^\infty$-norm of $Q$ is necessary in order to obtain the estimate for $\|Q\|_{H^2}$.
\end{remark}

\section{Basic Energy Law and Maximum Principle}
In this section, we first derive the following dissipative energy law for global weak solutions of problem \eqref{NSE}--\eqref{IC}.
\begin{proposition}\label{proposition on energy law}
 Let $(\bu,Q)$ be a weak solution of problem \eqref{NSE}--\eqref{IC} such that
\begin{align}
  &\bu\in H^1(0,T;(H^1_\sigma(\T))')\cap C([0,T]; L^2_\sigma(\T))\cap L^2(0,T;H^1_\sigma(\T)),\label{eq:1.26}\\
  & Q\in H^1(0,T;L^2(\T))\cap L^\infty(0,T;H^1(\T)\cap L^\infty(\T))\cap L^2(0,T;H^2(\T)).\label{eq:1.26a}
\end{align}
Then for any $t\in[0,T]$, we have
\begin{equation}\label{eq:1.25}
\E_{\tot}(t)+\nu\int_0^t\int_{\T}|\nabla{\bu}|^2\,dxds+
\int_0^t\int_{\T}\tr^2(\HH+\lambda\Id+\mu-\mu^{T})\,dxds =\E_{\tot}(0),
\end{equation}
where $\E_{\tot}$ is given by \eqref{totE}
\end{proposition}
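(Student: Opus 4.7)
The plan is to establish \eqref{eq:1.25} as a chain-rule energy identity: test \eqref{NSE} against $\bu$, test the pointwise $Q$-equation \eqref{eq:1.19} against the sym-traceless tensor $g\defeq \HH+\lambda\Id+\mu-\mu^{T}$ (whose explicit form is the right-hand side of \eqref{eqnQExpansion}), add the two identities, and exhibit the cancellations between the stress contributions in the momentum equation and the transport/rotational contributions in the $Q$-equation.

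First I would verify that the regularity \eqref{eq:1.26}--\eqref{eq:1.26a} makes every pairing legitimate. Applying the 2D interpolation estimate \eqref{eq:1.30} to $Q\in L^\infty(0,T;H^1\cap L^\infty)\cap L^2(0,T;H^2)$ yields $\nabla Q\in L^4(0,T;L^4(\T))$, so that $\sigma^a,\sigma^s\in L^2(0,T;L^2(\T))$ pair with $\nabla\bu\in L^2(0,T;L^2(\T))$, and the explicit form of $g$ in \eqref{eqnQExpansion} likewise lies in $L^2(0,T;L^2(\T))$ and pairs with $\p_t Q\in L^2(0,T;L^2(\T))$. The Lions--Magenes identity \eqref{Gel} then gives $\bu\in C([0,T];L^2_\sigma)$ with $\langle\p_t\bu,\bu\rangle=\tfrac12\tfrac{d}{dt}\|\bu\|_{L^2}^2$, and $Q\in C([0,T];H^1)$, which guarantees that $\E_{\tot}$ is continuous up to $t=0$.

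The two basic identities are
\[\tfrac12\tfrac{d}{dt}\|\bu\|_{L^2}^2+\nu\|\nabla\bu\|_{L^2}^2=-\int_{\T}(\sigma^a+\sigma^s):\nabla\bu\,dx,\]
\[-\tfrac{d}{dt}\E(Q)+\int_{\T}\bu\cdot\nabla Q:g\,dx+\int_{\T}(Q\omega-\omega Q):g\,dx=\int_{\T}|g|^2\,dx,\]
where the key identity $\int_{\T}\p_t Q:g\,dx=-\tfrac{d}{dt}\E(Q)$ is proved term-by-term from \eqref{eqnQExpansion} (each piece is integrated by parts against $\p_t Q$ and recombined by the Leibniz rule to reproduce the corresponding contribution of the density \eqref{energy-density}; the multipliers $\lambda\Id$ and $\mu-\mu^{T}$ drop out because $\p_t Q$ is sym-traceless, and in 2D the identities $Q^2=\tfrac12\tr(Q^2)\Id$ and $\tr(Q^3)=0$ for sym-traceless $Q$ explain the absence of the $b$-term from \eqref{eqnQExpansion}). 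Adding these identities, the stress terms are killed by (a) Lemma \ref{cancel} with $M=g$, giving $\int_{\T}(Q\omega-\omega Q):g\,dx=\int_{\T}\sigma^a:\nabla\bu\,dx$, and (b) an Ericksen-type identity $\int_{\T}\bu\cdot\nabla Q:g\,dx=\int_{\T}\sigma^s:\nabla\bu\,dx$, obtained by rewriting $\HH_{ij}=-\tfrac{\p\F}{\p Q_{ij}}+\p_l\tfrac{\p\F}{\p Q_{ij,l}}$, integrating by parts, applying the chain rule $\p_k\F=\tfrac{\p\F}{\p Q_{ij}}Q_{ij,k}+\tfrac{\p\F}{\p Q_{ij,l}}Q_{ij,lk}$, and using $\div\bu=0$ to discard the resulting total derivative. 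What remains is $\tfrac{d}{dt}\E_{\tot}+\nu\|\nabla\bu\|_{L^2}^2+\int_{\T}|g|^2\,dx=0$ in $\mathcal{D}'(0,T)$, and integrating from $0$ to $t$ delivers \eqref{eq:1.25}.

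The main technical obstacle is executing cancellation (b) for the full anisotropic free energy, especially for the cubic $L_4$-term: one must account simultaneously for the three pieces $2L_4(Q_{ij,l}Q_{lk})_{,k}$, $-L_4 Q_{kl,i}Q_{kl,j}$ and $\tfrac{L_4}{2}|\nabla Q|^2\delta_{ij}$ in \eqref{eqnQExpansion} and reorganise them into the cubic Ericksen stress $-2L_4 Q_{jm}Q_{kl,m}Q_{kl,i}$ in \eqref{tensors}. Each manipulation is elementary but notationally heavy; it is precisely here that the assumption $Q\in L^\infty(L^\infty)\cap L^2(H^2)$ with $\nabla Q\in L^4(L^4)$ is essential, as it ensures all the cubic integrands are absolutely integrable.
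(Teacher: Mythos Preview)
Your proposal is correct and follows the same overall architecture as the paper: test \eqref{NSE} with $\bu$, test \eqref{Q equ} with $g=\HH+\lambda\Id+\mu-\mu^{T}$, and verify the two cancellations. The paper handles cancellation (a) exactly as you do, via Lemma \ref{cancel}. For cancellation (b), however, the paper does not invoke the abstract Euler--Lagrange identity $\HH_{ij}=-\tfrac{\partial\F}{\partial Q_{ij}}+\partial_l\tfrac{\partial\F}{\partial Q_{ij,l}}$ together with the chain rule $\partial_k\F=\tfrac{\partial\F}{\partial Q_{ij}}Q_{ij,k}+\tfrac{\partial\F}{\partial Q_{ij,l}}Q_{ij,lk}$; instead it computes $\int_{\T}u_i\partial_j\sigma_{ij}^s\,dx$ by four explicit index manipulations, one for each of the $L_1,L_2,L_3,L_4$ contributions in \eqref{tensors}, and then matches the result against $-\int_{\T}\HH:(\bu\cdot\nabla Q)\,dx$. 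Your variational argument is shorter and works for any density $\F(Q,\nabla Q)$, while the paper's explicit computation makes every cubic $L_4$ term visible and directly verifiable under the stated regularity. One small comment: for the identity $\int_{\T}\partial_t Q:g\,dx=-\tfrac{d}{dt}\E(Q)$ it is cleaner to argue, as the paper does, directly from $\HH=-\delta\E/\delta Q$ after dropping the Lagrange multipliers (since $\partial_t Q\in\mathcal{S}_0^{(2)}$), rather than reconstituting $\E(Q)$ term-by-term from the simplified expression \eqref{eqnQExpansion}.
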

\begin{proof}
The regularity properties \eqref{eq:1.26}--\eqref{eq:1.26a} guarantee that the solution $\bu$
and the quantity $\HH+\lambda\Id+\mu-\mu^{T} \in L^2(0,T;L^2(\T))$ can be used as test functions
for equations \eqref{NSE} and \eqref{Q equ}, respectively. Thus,
testing \eqref{NSE} by $\bu$ yields that
\begin{align}
  \frac 12 \frac d{dt}\int_{\T}|\bu|^2dx +\nu \int_{\T}|\nabla \bu|^2 dx
  +\int_{\T}\sigma^a : \nabla \bu dx=\int_{\T}\partial_j \sigma_{ij}^s u_i dx,\quad \text{a.e. in}\ (0,T),\label{Te1}
\end{align}
where one can verify the integrability of the right-hand side using \eqref{eq:1.26}--\eqref{eq:1.26a}
together with the Sobolev embedding theorem in two dimensions.
Keeping the symmetry property of $\HH+\lambda\Id+\mu-\mu^{T}$ in mind, we deduce from \eqref{cancellation}  that
\begin{align*}
   &\tr^2( \HH+\lambda\Id+\mu-\mu^{T})\\
   &\quad =(\HH+\lambda\Id+\mu-\mu^{T}):( Q_t+\bu\cdot\nabla{Q}+Q\omega-\omega{Q})\\
   &\quad = (\HH+\lambda\Id+\mu-\mu^{T}):(Q_t+ \bu\cdot\nabla Q) +( \HH+\lambda\Id+\mu-\mu^{T}):(Q\omega-\omega Q)\\
   &\quad = \HH :(Q_t+ \bu\cdot\nabla Q)  +\Big[Q( \HH+\lambda\Id+\mu-\mu^{T}) - ( \HH+\lambda\Id+\mu-\mu^{T})Q\Big]:\nabla \bu,
 \end{align*}
where we have used the identity
$(\lambda\Id+\mu-\mu^{T}):(\p_t Q+ \bu\cdot\nabla Q) =0$, since $\mu-\mu^{T}$ is skew-symmetric and $Q$ is traceless.
Then it follows that  for almost every $t\in (0,T)$,
\begin{align}
 &\int_{\T}\tr^2( \HH+\lambda\Id+\mu-\mu^{T})dx\nonumber\\
 &\quad =-\frac d{dt}\mathcal{E}(Q)+\int_{\T}\HH:(\bu\cdot\nabla Q)dx+\int_{\T}\sigma^a:\nabla \bu dx.
 \label{Te2}
\end{align}
Summing up the identities \eqref{Te1} and \eqref{Te2}, we obtain
\begin{align}
  &\frac d{dt}\left(  \frac 12 \int_{\T}
|\bu|^2 dx+\mathcal{E}(Q)\right)+\nu \int_{\T} |\nabla
\bu|^2dx +\int_{\T} \tr^2( \HH+\lambda\Id+\mu-\mu^{T})dx\nonumber\\
    &\quad =\int_{\T}\HH :(\bu\cdot\nabla Q) dx +\int_{\T}\partial_j \sigma_{ij}^su_i dx,\quad \text{a.e. in}\ (0,T), \label{vanish}
\end{align}
It remains to show that the right-hand side of \eqref{vanish} vanishes.
Since this step actually does not use the system \eqref{NSE}--\eqref{Q equ}, up to a suitable density argument,
we can simply assume that $(\bu,Q)$ is smooth enough. Moreover, by some straightforward computations, we have the following identities:
\begin{align*}
    u_i \partial_j(Q_{kl,i}Q_{kl,j})
    & =u_i Q_{kl,i} Q_{kl,jj}+u_i Q_{kl,ij}Q_{kl,j}\\
    & =u_i Q_{kl,i} \Delta Q_{kl}+\frac 12u_i \partial_i(Q_{kl,j}Q_{kl,j}),
\end{align*}
\begin{align*}
   u_i \partial_j(Q_{kj,l}Q_{kl,i})
   & =u_i Q_{kl,i} Q_{kj,lj}+u_i Q_{kj,l}Q_{kl,ij}\\
   & =u_i Q_{kl,i} Q_{kj,lj}+\frac 12u_i( Q_{kj,l}Q_{kl,ij}+  Q_{kl,j}Q_{kj,il})\\
   & =u_i Q_{kl,i} Q_{kj,lj}+\frac 12u_i\partial_i( Q_{kj,l}Q_{kl,j})\\
   & =u_i Q_{kj,i} Q_{kl,lj}+\frac 12u_i\partial_i( Q_{kj,l}Q_{kl,j})
\end{align*}
\begin{align*}
    u_i \partial_j(Q_{kl,l}Q_{kj,i})
    &=u_i Q_{kj,i} Q_{kl,lj}+u_i Q_{kl,l}Q_{kj,ij}\\
    &=u_i Q_{kj,i} Q_{kl,lj}+\frac 12 u_i (Q_{kl,l}Q_{kj,ij}+Q_{kj,j}Q_{kl,il})\\
    &=u_i Q_{kj,i} Q_{kl,lj}+\frac 12 u_i \partial_i (Q_{kl,l}Q_{kj,j})
\end{align*}
\begin{align*}
   &u_i\partial_j(Q_{jm}Q_{kl,m}Q_{kl,i})\\
   &\quad =u_iQ_{jm,j}Q_{kl,m}Q_{kl,i}+u_i Q_{jm}Q_{kl,mj}Q_{kl,i}+u_i Q_{jm}Q_{kl,m}Q_{kl,ij} \\
   &\quad =u_iQ_{jm,j}Q_{kl,m}Q_{kl,i}+u_i Q_{jm}Q_{kl,mj}Q_{kl,i}+\frac 12 u_i Q_{jm}\partial_i (Q_{kl,m}Q_{kl,j}) \\
   &\quad =u_iQ_{jm,j}Q_{kl,m}Q_{kl,i}+u_i Q_{jm}Q_{kl,mj}Q_{kl,i}\\
   &\qquad +\frac 12 u_i \partial_i(Q_{jm} (Q_{kl,m}Q_{kl,j}) )-\frac 12 u_i Q_{jm,i}  (Q_{kl,m}Q_{kl,j}).
\end{align*}
 Inserting the above four identities into \eqref{tensors} and using the divergence-free condition of $\bu$, we deduce that
 \begin{align*}
 &\int_{\T}   u_i\partial_j \sigma_{ij}^sdx\\
   &\quad=    -2\int_{\T} u_i\partial_j \big(L_1Q_{kl,i}Q_{kl,j}+L_2Q_{kj,l}Q_{kl,i}+L_3Q_{kl,l}Q_{kj,i}+L_4Q_{jm}Q_{kl,m}Q_{kl,i}\big)dx\\
   &\quad=    -2 \int_{\T}  \left(L_1u_i Q_{kl,i} \Delta Q_{kl}+L_2u_i Q_{kj,i} Q_{kl,lj}+L_3u_i Q_{kj,i} Q_{kl,lj}\right)dx\\
   &\qquad -2L_4\int_{\T}  \left(u_iQ_{jm,j}Q_{kl,m}Q_{kl,i}+u_i Q_{jm}Q_{kl,mj}Q_{kl,i}-\frac 12 u_i Q_{jm,i} Q_{kl,m}Q_{kl,j}\right)dx \\
   &\quad=    -2 \int_{\T} \left[ L_1u_i Q_{kl,i} \Delta Q_{kl}+(L_2+L_3)u_i Q_{kj,i} Q_{kl,lj}\right]dx\\
   &\qquad -2L_4\int_{\T} \left( u_iQ_{kl,i}Q_{jm,j}Q_{kl,m}+u_i Q_{kl,i}Q_{jm}Q_{kl,mj}\right)dx\\
   &\qquad +L_4\int_{\T}  u_i Q_{jm,i}   Q_{kl,m}Q_{kl,j}dx \\
   &\quad := -\int_{\T}  \widetilde{\HH}:(\bu\cdot\nabla Q) dx.
   \end{align*}
 It follows from \eqref{Hdef} that $ \widetilde{\HH} = \HH+aQ-bQ^2+c\tr(Q^2)Q$.
 On the other hand, by the incompressibility condition \eqref{incompressibility}, we see that
 \begin{align*}
 \int_{\T} \big[aQ-bQ^2+c\tr(Q^2)Q\big]:(\bu\cdot\nabla Q) dx=0.
 \end{align*}
As a consequence, it holds
 $$\int_{\T}   u_i\partial_j \sigma_{ij}^sdx=-\int_{\T}  \HH:(\bu\cdot\nabla Q) dx,$$
which implies that the right-hand side of \eqref{vanish} will simply vanish.
Then integrating \eqref{vanish} with respect to time, we arrive at our conclusion \eqref{eq:1.25}.
\end{proof}
The basic energy law \eqref{eq:1.25} will play an essential role in the analysis of problem \eqref{NSE}--\eqref{IC}.
However, we note that when $L_4\neq 0$, \eqref{eq:1.25}
fails to provide certain good a priori estimates for the weak solution $(\bu,
Q)$, since the free energy $\E(Q)$ may be unbounded from below. This
is very different from the case that has been considered in the
previous literature, where $L_4=0$ is always assumed. On the other
hand, the above mentioned difficulty can be overcome if an estimate on
$\|Q\|_{L^\infty_tL^\infty_x}$ is available. To this end, we consider the
following evolution equation for $Q$ with advection and rotation effects due to the fluid:
\begin{align}
&\p_t Q +\bu\cdot\nabla{Q}+Q\omega-\omega{Q}\,=\,\HH+\lambda\Id+\mu-\mu^{T},\quad \forall\, (x,t) \in \T\times\mathbb{R}^+,\label{Q equd}\\
&Q(x+e_i,t)=Q(x,t), \qquad\qquad\qquad\qquad\qquad\qquad \forall\, (x,t) \in \T\times\mathbb{R}^+,\label{BCd}\\
&Q(x, 0)=Q_0(x),\qquad\qquad\qquad\qquad\qquad\qquad\qquad\,\,
\forall\,x\in\T,\label{ICd}
\end{align}
where the right-hand side of \eqref{Q equd} is given by \eqref{eqnQExpansion}.
One interesting feature of the system \eqref{Q equd}--\eqref{ICd} is that the $L^\infty$-norm of its solution $Q$
will be preserved during the time evolution, provided that it is suitably small at $t=0$
(see \cite{IXZ14} for the fluid free case, which is a gradient flow generated by the free energy $\E(Q)$).
\begin{lemma}\label{prop on maximum principle}
Assume that $c>0$, $\zeta>0$ and $\bu\in C([0,T];L^2_\sigma(\T))\cap L^2(0,T;H^{k}_\sigma(\T))$ for some integer $k\geq 2$.
Let $Q\in H^1(0,T;L^2(\T))\cap C([0,T];H^1(\T)\cap L^\infty(\T))\cap L^2(0,T;H^2(\T))$ be a solution to problem \eqref{Q equd}--\eqref{ICd}.
For any constant $\eta$ satisfying $0<\eta < \frac{1}{9}\left(\frac{\zeta}{L_4}\right)^2$, if
\begin{equation}\label{small coefficient assumption}
 \|Q_0\|_{L^\infty}\leq \sqrt{\eta},
\end{equation}
and
\begin{align}
a \geq - c \eta,\label{coea}
\end{align}
then it holds
\begin{equation}\label{eq:1.20}
 \|Q\|_{L^\infty(0, T;L^\infty(\T))} \leq \sqrt{\eta}.
\end{equation}
\end{lemma}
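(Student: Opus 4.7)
\textbf{Proof plan for Lemma~\ref{prop on maximum principle}.}
The plan is to derive a scalar parabolic inequality for $g\defeq |Q|^2=Q_{ij}Q_{ij}$, test it against $\phi\defeq(g-\eta)_+$, and close the argument by a continuity bootstrap. First I would multiply the $Q$-equation \eqref{Q equd} by $2Q_{ij}$ and sum over $i,j$. The transport term yields $\bu\cdot\nabla g$, the rotational term vanishes because $(Q\omega-\omega Q):Q=2\tr(Q^2\omega)=0$ (symmetric times antisymmetric), and the term $\tfrac{L_4}{2}|\nabla Q|^2\delta_{ij}Q_{ij}$ drops by $\tr Q=0$. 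Rewriting $2\zeta Q_{ij}\Delta Q_{ij}=\zeta\Delta g-2\zeta|\nabla Q|^2$ and using the identity $(Q_{ij,l}Q_{lk})_{,k}Q_{ij}=\tfrac12(g_{,l}Q_{lk})_{,k}-Q_{lk}B_{lk}$ with $B_{ij}\defeq Q_{kl,i}Q_{kl,j}$, the resulting equation reads
\begin{equation*}
\partial_tg+\bu\cdot\nabla g-\zeta\Delta g-2L_4(Q_{lk}g_{,l})_{,k}=-2\zeta|\nabla Q|^2-6L_4\,Q{:}B-2ag-2cg^2.
\end{equation*}

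Next I would test this equation against $\phi=(g-\eta)_+$ (legitimate in view of the regularity \eqref{eq:1.26}--\eqref{eq:1.26a}, possibly via smooth truncation of the positive part). The transport term disappears by incompressibility, while integration by parts yields $\zeta\int|\nabla\phi|^2$ from the Laplacian and $2L_4\int Q_{lk}\phi_{,l}\phi_{,k}$ from the $L_4$ divergence term, using $\nabla\phi=\chi_{\{g>\eta\}}\nabla g$. The main pointwise bounds I would need are: (i) $|Q_{lk}\phi_{,l}\phi_{,k}|\le|Q||\nabla\phi|^2$, and (ii) since $B$ is positive semi-definite with $\tr B=|\nabla Q|^2$, the estimate $|Q{:}B|\le\|Q\|_{\mathrm{op}}\tr B\le|Q||\nabla Q|^2$. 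For the potential terms, on $\{\phi>0\}$ one has $g>\eta$, whence $-2g(cg+a)\phi\le 0$ precisely under the hypothesis $a\ge -c\eta$. Combining these yields
\begin{equation*}
\tfrac12\tfrac{d}{dt}\|\phi\|_{L^2}^2+\bigl(\zeta-2|L_4|\|Q\|_{L^\infty}\bigr)\|\nabla\phi\|_{L^2}^2\le\bigl(6|L_4|\|Q\|_{L^\infty}-2\zeta\bigr)\int_{\T}|\nabla Q|^2\phi\,dx.
\end{equation*}

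Whenever $\|Q(t)\|_{L^\infty}\le\sqrt{\eta}<\zeta/(3|L_4|)$ (the latter ensured by the assumption on $\eta$), both the bracket on the left is nonnegative and the bracket on the right is nonpositive, so $\tfrac{d}{dt}\|\phi(t)\|_{L^2}^2\le 0$. To convert this conditional estimate into an unconditional one, I would run a standard continuity argument: set $T^*\defeq\sup\{t\in[0,T]:\|Q(s)\|_{L^\infty}\le\sqrt{\eta}\text{ for all }s\in[0,t]\}$, which is strictly positive by the initial bound \eqref{small coefficient assumption} and the continuity of $t\mapsto\|Q(t)\|_{L^\infty}$ inherited from $Q\in C([0,T];L^\infty(\T))$. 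On $[0,T^*]$ the above differential inequality holds, and since $\phi(0)=0$ we conclude $\phi(t)\equiv 0$ there, i.e.\ $\|Q(t)\|_{L^\infty}\le\sqrt{\eta}$. If $T^*<T$, continuity would let me extend the inequality $\|Q\|_{L^\infty}<\zeta/(3|L_4|)$ to a slightly larger interval and repeat, contradicting the definition of $T^*$, so $T^*=T$.

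The principal obstacle I anticipate is the circular nature of the $L^\infty$-bound together with the sign-indefinite $L_4$ contributions: the cubic gradient term $-6L_4\,Q{:}B$ and the second-order variable-coefficient term $2L_4(Q_{lk}g_{,l})_{,k}$ can each, a priori, overwhelm the good dissipation $\zeta\Delta g-2\zeta|\nabla Q|^2$. The quantitative smallness threshold $\eta<\tfrac{1}{9}(\zeta/L_4)^2$ is exactly the one that lets the Frobenius/operator-norm bound $|Q{:}B|\le|Q||\nabla Q|^2$ be absorbed into the $-2\zeta|\nabla Q|^2$ dissipation with a strictly positive margin, and simultaneously keeps the coefficient of $\int|\nabla\phi|^2$ nonnegative. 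The secondary technical point is the use of $(g-\eta)_+$ as a test function, which I would handle by a routine smooth-cutoff/regularization argument exploiting $g\in L^2(0,T;H^1(\T))\cap L^\infty(0,T;L^\infty(\T))$ implied by \eqref{eq:1.26a}.
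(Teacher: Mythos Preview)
Your proposal is correct and is essentially the paper's own argument: deriving the scalar equation for $g=|Q|^2$ and testing with $(g-\eta)_+$ is exactly the same computation as the paper's choice of testing \eqref{Q equd} directly with $2(|Q|^2-\eta)^+Q$, and your continuity/bootstrap argument is a minor repackaging of the paper's contradiction argument. The only cosmetic difference is that the paper keeps the pointwise factor $|Q(x,t)|$ inside the integrals rather than pulling out $\|Q(t)\|_{L^\infty}$, but the thresholds and conclusion are identical.
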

\begin{proof}
We take the inner product of equation \eqref{Q equd} with the test
function $2(|Q|^2-\eta)^+Q$, where $(\cdot)^+$ denotes the non-negative part of a function, and then integrate over $\T$. By the
incompressibility condition $\nabla \cdot \bu=0$, formula \eqref{eqnQExpansion},
and the facts that $\omega=-\omega^T$, $Q\in\mcS^{(2)}_0$, we deduce, after integration by parts that
\begin{align*}
&\frac12\frac{d}{dt}\int_{\T}\big|(|Q|^2-\eta)^+\big|^2\,dx +\zeta\int_{\T}\big|\nabla(|Q|^2-\eta)^+\big|^2\,dx+2\zeta\int_{\T}|\nabla{Q}|^2(|Q|^2-\eta)^+\,dx\\
&\quad=
-\underbrace{6L_4\int_{\T}Q_{ij}\partial_iQ_{kl}\partial_jQ_{kl}(|Q|^2-\eta)^+\,dx}_{I_1}
-\underbrace{2L_4\int_{\T}Q_{ij}\partial_i(|Q|^2-\eta)^+\partial_j(|Q|^2-\eta)^+\,dx}_{I_2}\\
&\qquad
- 2\int_{\T}|Q|^2\big(a+c|Q|^2\big)(|Q|^2-\eta)^+\,dx.
\end{align*}
The first two terms on the right-hand side can be estimated as follows:
\begin{align*}
  |I_1|&\leq  6|L_4|\int_{\T}|Q||\nabla{Q}|^2(|Q|^2-\eta)^+\,dx, \\
  |I_2|&\leq  2|L_4|\int_{\T}|Q|\big|\nabla(|Q|^2-\eta)^+\big|^2\,dx.
\end{align*}
As a consequence, we obtain
\begin{align}\label{integral inequality}
\frac12 &\frac{d}{dt}\int_{\T}\big|(|Q|^2-\eta)^+\big|^2\,dx \non\\
&\leq
2\int_{\T}\big(3|L_4||Q|-\zeta\big)|\nabla{Q}|^2(|Q|^2-\eta)^+\,dx
+\int_{\T}\big(2|L_4||Q|-\zeta\big)\big|\nabla(|Q|^2-\eta)^+\big|^2\,dx\non\\
&\quad - 2\int_{\T}|Q|^2\big(a+c|Q|^2\big)(|Q|^2-\eta)^+\,dx.
\end{align}
To finish the proof, we argue by contradiction. Denote
$$
  f(t)=\|Q(t)\|_{L^\infty},
$$
which is a continuous function on $[0,T]$. According to the assumption
$f(0)\leq \sqrt{\eta}$, if $f(t)\leq \sqrt{\eta}$ for any $t\in [0, T]$,
then the conclusion automatic follows. Otherwise, there exists some $t_0\in (0,
T]$ and $s_0\in [0,t_0)$ such that
$$\sqrt{\eta}<f(t)\leq\frac{\zeta}{3|L_4|}\quad \text{for any}\ t \in (s_0, t_0],\quad \text{and}\ \ f(s_0)=\sqrt{\eta}.$$
From \eqref{coea}, we have
\begin{align}
&\int_{\T}|Q|^2\big(a+c|Q|^2\big)(|Q|^2-\eta)^+\,dx\nonumber\\
&\quad \geq c\int_{\T}|Q|^2(|Q|^2-\eta)(|Q|^2-\eta)^+\,dx\nonumber\\
&\quad \geq c\int_{\T}|Q|^2\left[(|Q|^2-\eta)^+\right]^2\,dx\geq 0,\quad \forall\, t \in [s_0, t_0].\nonumber
\end{align}
Then we infer from \eqref{integral inequality} that
$$
  \frac{d}{dt}\int_{\T}\big|(|Q|^2-\eta)^+\big|^2\,dx\leq 0, \quad
  \forall \,t \in [s_0, t_0),
$$
which together with the fact $f(s_0)=\sqrt{\eta}$ yields that
$$
  \int_{\T}\big|(|Q|^2-\eta)^+\big|^2(t)\,dx\leq 0, \qquad
  \forall\, t \in [s_0, t_0).
$$
Hence, $f(t)\leq \sqrt{\eta}$ for any $t \in [s_0, t_0)$, which leads to a
contradiction with the assumption $f(t_0)>\sqrt{\eta}$ and the continuity of $f(t)$.
\end{proof}

\section{Global Well-posedness of an Approximate System}
\setcounter{equation}{0}

In this section, we study an approximate system for our original problem \eqref{NSE}--\eqref{IC}, in which a higher-order dissipative term is added to the Navier--Stokes equation
\eqref{NSE}, for the sake of improving the regularity as well as integrability of the velocity field $\bu$.
Denote
$$
 \mathcal{L}_\delta:=\delta(-\Delta)^k,
$$
for some given positive constant $\delta$ and even integer $k\geq 4$. We consider the following regularized system
\begin{align}
\p_t \bu+\bu\cdot\nabla{\bu}+ \mathcal{L}_\delta \bu-\nu
\Delta{\bu}+\nabla P &=\nabla\cdot(\sigma^a+\sigma^s),\qquad\;\; \forall\,
(x,t) \in \T\times\mathbb{R}^+,
\label{NSEa}\\
\nabla\cdot{\bu}&=0,\qquad\qquad\qquad\quad\;\;\, \forall\, (x,t) \in \T\times\mathbb{R}^+,\label{incoma}\\
\p_t Q +\bu\cdot\nabla{Q}+Q\omega-\omega{Q} &=\HH
+\lambda\Id+\mu-\mu^{T},\quad \forall\, (x,t) \in
\T\times\mathbb{R}^+,\label{Q equa}
\end{align}
subject to the periodic boundary conditions and initial conditions
\begin{align}
 &\bu(x+e_i,t) = \bu(x,t), \quad Q(x+e_i,t)=Q(x,t), \qquad\qquad\; \forall\, (x,t) \in \T\times\mathbb{R}^+,
 \label{BCa}\\
&\bu(x, 0)=\bu_0(x) \ \ \text{with}\ \nabla\cdot{\bu_0}=0, \quad
Q(x, 0)=Q_0(x),\quad\ \forall\,x\in\T.\label{ICa}
\end{align}

The main result of this section is as follows:

\begin{proposition}\label{approx1}
Let $T>0$ be arbitrary. Under the assumptions \eqref{aL4}--\eqref{structural-assumption}, there exists a positive constant
\begin{align}
 \eta_1=C_1\left(\frac{\kappa}{L_4}\right)^2,
  \label{eta1}
\end{align}
where $C_1>0$ only depends on $\T$ such that if in addition, the coefficients $a$, $c$ fulfill $a\geq -c\eta_1$,
then for any initial data $(\bu_0, Q_0)\in L^2_\sigma(\T,\mathbb{R}^2)\times H^2(\T,\mathcal{S}_0^{(2)})$ with
\begin{equation}\label{eq:1.29}
 \|Q_0\|_{L^\infty}\leq \sqrt{\eta_1},
\end{equation}
the approximate system \eqref{NSEa}--\eqref{ICa} admits a unique global solution such that
 \begin{align}
  & \bu\in H^1(0, T; (H^{k}_\sigma(\T))')\cap C([0,T], L_\sigma^2(\T))\cap L^2(0, T; H^k_{\sigma}(\T)),\label{rregu}\\
  & Q\in H^1(0,T;H^1(\T))\cap C([0,T]; H^2(\T))\cap L^2(0,T;H^3(\T)).\label{rregQ}
 \end{align}
 Moreover, we have $Q\in C([0,T]; L^\infty(\T))$ with the uniform bound
 \begin{equation}\label{eq:1.20a}
 \|Q\|_{C([0, T];L^\infty(\T))} \leq \sqrt{\eta_1},
\end{equation}
  as well as the following dissipative energy law
\begin{align}
  &\frac 12\int_{\T} |\bu(\cdot, t)|^2 dx +\mathcal{E}(Q(t))+\int_0^t \int_{\T} \left(\delta|\nabla^k \bu(s)|^2+\nu |\nabla \bu(s)|^2 + \tr^2(  \HH+\lambda\Id+\mu-\mu^{T})\right)dxds\non\\
  &\quad = \frac 12\int_{\T} |\bu_0|^2dx +\mathcal{E}(Q_0), \quad\qquad \forall\, t\in [0,T].\label{eq:1.01}
\end{align}
\end{proposition}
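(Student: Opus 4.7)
The plan is to establish local existence by a Banach fixed-point argument adapted to the regularized system, and then to extend the local solution globally using the $L^\infty$-preservation property of Lemma~\ref{prop on maximum principle} together with the energy identity \eqref{eq:1.01}. For the local step, I would fix $T_*>0$ small and work on a closed ball in $L^2(0,T_*;H^1(\T))$ subject to the $L^\infty$-constraint $\|\tilde Q\|_{L^\infty_{t,x}}\le\sqrt{\eta_1}$ and $\tilde Q(\cdot,0)=Q_0$. Given such a $\tilde Q$, define $\mathcal{Y}(\tilde Q):=Q$ in two substeps: first solve the regularized Navier--Stokes system \eqref{NSEa}--\eqref{incoma} with the stresses $\sigma^a,\sigma^s$ evaluated at $\tilde Q$, treating it as a nonlinear problem for $\bu$ alone; second, feed the resulting $\bu$ into \eqref{Q equa} (with right-hand side \eqref{eqnQExpansion} left implicit in $Q$) and solve this linear transport-diffusion equation. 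Because the leading operator $\mathcal{L}_\delta-\nu\Delta$ has order $2k\ge 8$, standard analytic-semigroup arguments combined with the two-dimensional embedding $H^k(\T)\hookrightarrow W^{3,\infty}(\T)$ give $\bu\in L^2(0,T_*;H^k_\sigma(\T))$, which supplies the smoothness of the advection and rotation coefficients needed in step two.

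The map $\mathcal{Y}$ is well-defined into the same ball by Lemma~\ref{prop on maximum principle}: the regularity of $\bu$ coming from $\mathcal{L}_\delta$ is enough to justify the test function $2(|Q|^2-\eta_1)^+Q$, and the smallness of $\|Q_0\|_{L^\infty}$ together with $a\ge -c\eta_1$ then propagates to $\|Q(t)\|_{L^\infty}\le\sqrt{\eta_1}$ throughout $[0,T_*]$. For contractivity I would write the equation for $Q^1-Q^2=\mathcal{Y}(\tilde Q^1)-\mathcal{Y}(\tilde Q^2)$, test it in $L^2(\T)$, and use Lemma~\ref{ellipes} together with the interpolation inequalities \eqref{eq:1.27}--\eqref{eq:1.31}. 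The cubic $L_4$-terms in the difference produce factors proportional to $\sqrt{\eta_1}\,|L_4|$, so the choice \eqref{eta1} with $C_1$ small forces them to be strictly dominated by the $\zeta\|\Delta(Q^1-Q^2)\|_{L^2}^2$ dissipation (recall \eqref{zk}); shrinking $T_*$ then absorbs the lower-order contributions and yields a contraction, hence a unique local-in-time solution with the regularity recorded in \eqref{rregu}--\eqref{rregQ}.

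To reach arbitrary $T>0$, I would run the energy computation of Proposition~\ref{proposition on energy law} on the regularized system, which now produces an additional $\delta\|\nabla^k\bu\|_{L^2}^2$ dissipation and thus gives \eqref{eq:1.01}. Since the $L^\infty$-bound \eqref{eq:1.20a} is preserved by Lemma~\ref{prop on maximum principle}, the cubic $L_4$-contribution inside $\E(Q)$ is strictly dominated by the $\kappa$-quadratic part thanks to \eqref{eta1}, so $\E(Q)$ remains coercive along the flow. This gives uniform bounds for $\bu$ in $L^\infty_tL^2_x\cap L^2_tH^k_x$ and for $Q$ in $L^\infty_tH^1_x\cap L^\infty_tL^\infty_x$. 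Feeding these into Lemma~\ref{lemma1.0} (applicable once $C_1$ is chosen so that $\eta_1<(\zeta/L_4)^2/121$) upgrades $Q$ to $L^2_tH^2_x$, and a further bootstrap using \eqref{H3} and \eqref{eq:1.31} delivers $Q\in L^2_tH^3_x\cap C_tH^2_x$; these uniform bounds rule out finite-time blow-up of the local solution. Uniqueness within the stated regularity class follows from a direct energy estimate for the difference of two solutions in $L^2\times H^1$, again closed by the $L^\infty$-control on $Q$. The main obstacle is the contractivity of $\mathcal{Y}$: this is precisely where the higher-order regularization $\mathcal{L}_\delta$ with $k\ge 4$ (which supplies the extra derivatives of $\bu$ needed to estimate the anti-symmetric stress $\sigma^a$ and the transport terms in the difference equation) and the smallness $\|Q_0\|_{L^\infty}\le\sqrt{\eta_1}\propto\kappa/|L_4|$ together make the cubic $L_4$-terms strictly subordinate to the dissipation.
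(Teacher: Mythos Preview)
Your global-extension argument (maximum principle $\Rightarrow$ $L^\infty$-bound $\Rightarrow$ coercivity of $\E(Q)$ $\Rightarrow$ energy law plus Lemma~\ref{lemma1.0} and bootstrap) is essentially the paper's Section~4.2 and is correct. The local step, however, has two genuine gaps. First, the fixed-point space $L^2(0,T_*;H^1(\T))$ is too weak for your map $\mathcal{Y}$ even to be well-defined: for $\tilde Q$ merely in $L^2_tH^1_x\cap L^\infty_{t,x}$, the stress $\sigma^a(\tilde Q)$ contains terms of the type $\tilde Q\Delta\tilde Q$ (see \eqref{tensora} and \eqref{eqnQExpansion}) and $\sigma^s(\tilde Q)$ contains $\nabla\tilde Q\otimes\nabla\tilde Q$ (see \eqref{tensors}); the latter lies only in $L^1_tL^1_x$, which does not pair against $\nabla\mathbf{v}$ for $\mathbf{v}\in L^2(0,T_*;H^k_\sigma)$, so the regularized Navier--Stokes problem cannot be set up with this forcing. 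The trace constraint $\tilde Q(\cdot,0)=Q_0$ is likewise ill-defined in $L^2_tH^1_x$. Second, the $Q$-equation you feed $\bu$ into is \emph{not} a linear transport--diffusion equation: the terms $2L_4(Q_{ij,l}Q_{lk})_{,k}$, $-L_4 Q_{kl,i}Q_{kl,j}$, $\tfrac{L_4}{2}|\nabla Q|^2\delta_{ij}$ and $-c\tr(Q^2)Q_{ij}$ in \eqref{eqnQExpansion} are all nonlinear in $Q$. If you keep them implicit, solving this quasilinear problem for given $\bu$ is itself the crux and needs its own argument; if instead you linearize at $\tilde Q$, then Lemma~\ref{prop on maximum principle} (which is proved only for the genuine equation \eqref{Q equd}) no longer applies, and you lose the mechanism that keeps the image in the $L^\infty$-constrained ball.

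The paper avoids both issues by a different decoupling. The fixed-point variable is not $Q$ but the right-hand side $G\in L^2(0,T;H^1)$; the quasilinear coefficient is frozen at $Q_0$ by adding and subtracting $2L_4(Q_{ij,l}(Q_0)_{lk})_{,k}$, so that the auxiliary $Q$-problem (Lemma~\ref{lemma-energy-estimate-1}) is genuinely linear and maximal parabolic regularity delivers $Q\in C_tH^2_x\cap L^2_tH^3_x$ directly from $G$. Only then---with a regular $Q$ in hand---are the stresses computed (Lemma~\ref{lemma-energy-estimate-2}) and the Navier--Stokes problem solved for $\bu$. Crucially, the contraction is not obtained from $\sqrt{\eta_1}\,|L_4|$ being small but from time-interpolation factors $T^{1/8}$ and $T^{1/4}$ that arise because $Q-Q_0$ vanishes at $t=0$; see \eqref{key-estimateaa} and \eqref{contraction-estimate-5}. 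In particular, the maximum principle is not invoked anywhere in the local existence step and enters only afterward, once a local solution with the full regularity \eqref{eq:1.33}--\eqref{eq:1.33a} is already in hand.
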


The proof of Proposition \ref{approx1} consists of several steps.
Roughly speaking, we first prove the existence and uniqueness of a local-in-time
solution via a suitable fixed point argument. Then using the uniform estimates provided
by the dissipative energy law \eqref{eq:1.01}, we are able to extend the local solution to a global one.

\subsection{Local well-posedness}

We start with the local well-posedness of problem \eqref{NSEa}--\eqref{ICa}.

\medskip

\noindent \textbf{Step 1}. Reformulation of the system.

\medskip

Recalling the expression \eqref{eqnQExpansion}, we reformulate the system \eqref{NSEa}--\eqref{Q equa} into the following form:
\begin{align}
 \p_t \bu + \bu\cdot\nabla{\bu} + \mathcal{L}_\delta \bu-\nu \Delta{\bu}+\nabla P & = \nabla\cdot(\sigma^a+\sigma^s),\label{NSEb}\\
 \nabla\cdot{\bu} &=0,\\
 \p_t Q_{ij}-\zeta \Delta Q_{ij}-2L_4 (Q_{ij,l}(Q_0)_{lk})_{,k}
& =2L_4 (Q_{ij,l}Q_{l k})_{,k}-2L_4 (Q_{ij,l}(Q_0)_{l k})_{,k}+\mathcal{G}_{ij}(\bu,Q),
\label{Q equb}
\end{align}
  where
  \begin{align}
   \mathcal{G}_{ij}(\bu,Q)&:= -\bu\cdot\nabla{Q_{ij}}+\omega_{ik} Q_{kj} -Q_{ik}\omega_{kj} -aQ_{ij}-c
\tr(Q^2)Q_{ij} \non\\
&\quad\ -L_4 Q_{kl,i}Q_{kl,j} + \frac{L_4}{2}|\nabla{Q}|^2\delta_{ij}.
\label{eq:1.02}
  \end{align}
\begin{remark}
One can see that an extra term $-2L_4 (  Q_{ij,l }(Q_0)_{l k})_{,k}$ is
added to both sides of the equation  \eqref{Q equb}. Since $Q_0\in
H^2(\T)\hookrightarrow L^\infty(\T)$ and $\|Q_0\|_{L^\infty(\T)}$ is assumed to be relatively small (see \eqref{eq:1.29}), then the left-hand side of
 \eqref{Q equb} simply behaves like a linear heat equation.
 On the other hand, an advantage of adding the extra term $-2L_4 (Q_{ij,l }(Q_0)_{l k})_{,k}$ on the right-hand side of \eqref{Q equb}
 is that together with the nonlinear term $2L_4 (Q_{ij,l}Q_{l k})_{,k}$, it allows us to apply an interpolation argument
 that leads to the construction of a contraction mapping.
\end{remark}

\medskip
\noindent\textbf{Step 2}. Well-posedness of auxiliary problems for $\bu$ and $Q$.

\medskip

In order to deal with the highly nonlinear system \eqref{NSEb}--\eqref{Q equb},
we first present a preliminary result on the solvability of a higher-order Navier--Stokes type system.
\begin{lemma}\label{lemma1}
Let $\delta>0$, $k\geq 4$ be fixed.
For any $T>0$, consider the following problem
  \begin{align}
   & \p_t \bu +\bu\cdot\nabla{\bu}+ \mathcal{L}_\delta \bu-\nu\Delta{\bu}+\nabla P=g, \quad \forall\, (x,t) \in \T\times(0,T),\label{eq:1.12a}\\
   & \nabla \cdot \bu=0, \qquad\qquad\qquad\qquad\qquad\qquad\quad\, \forall\, (x,t) \in \T\times(0,T),\label{eq:1.12b}\\
   & \bu(x+e_i,t) = \bu(x,t), \qquad\qquad\qquad\qquad\;\, \forall\, (x,t) \in \T\times(0,T),\label{eq:1.12c}\\
   & \bu|_{t=0}=\bu_0(x), \qquad\qquad\qquad\qquad\qquad\quad\ \forall\, x \in \T.\label{eq:1.12d}
  \end{align}
  Then for any $\bu_0\in L^2_\sigma(\T)$ and $g\in L^2(0,T; (H^{k}_\sigma(\T))')$, problem \eqref{eq:1.12a}--\eqref{eq:1.12d} admits a unique global weak solution such that
  \begin{align}
  \bu\in H^1(0,T; (H^k_\sigma(\T))')\cap C([0,T]; L^2_\sigma(\T))\cap L^2(0,T; H^k_\sigma(\T)),\label{regup}
  \end{align}
  which satisfies
\begin{align}\label{eq:1.11}
 & \|\bu\|_{H^1(0,T;(H^k(\T))')\cap C([0,T];L^2(\T))\cap L^2(0,T;H^k(\T))}
  \leq C\left(\|g\|_{L^2(0,T;(H^{k}(\T))')}+\|\bu_0\|_{L^2}\right).
\end{align}
Let $\bu^i$ ($i=1,2$) be two weak solutions of problem \eqref{eq:1.12a}--\eqref{eq:1.12d} with external forces $g^i\in L^2(0,T;L^2(\T))$ and initial data $\bu^i(x,0)=\bu^i_0(x)\in L^2_\sigma(\T)$. Then we have
   \begin{equation}\label{eq:1.16}
    \| \bu^1- \bu^2\|_{C([0,T];L^2(\T))\cap L^2(0,T;H^k(\T))}\leq C\left(\|g^1-g^2\|_{L^2(0,T;(H^k(\T))')}+\|\bu^1_0-\bu^2_0\|_{L^2(\T)}\right),
  \end{equation}
  where the constant $C$ may depend on $\|\bu^i\|_{L^2(0,T;L^\infty(\T))}$, $T$ and $\T$.
Furthermore, if $g\in L^2(0,T;L^2(\T))$ and $\bu_0\in H^k_\sigma(\T)$, then the solution is more regular and
  \begin{align}\label{eq:1.14}
   & \|\bu\|_{H^1(0,T;L^2(\T))\cap C([0,T];H^k(\T))\cap L^2(0,T;H^{2k}(\T))}
    \leq C\left(\|g\|_{L^2(0,T;L^2(\T))}+\|\bu_0\|_{H^k(\T)}\right).
  \end{align}
All the constants above may depend on the parameter $\delta>0$.
\end{lemma}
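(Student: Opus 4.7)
The plan is to construct a solution by Galerkin approximation, exploiting the strong dissipation provided by $\mathcal{L}_\delta=\delta(-\Delta)^k$ both to close the a priori estimates and to pass to the limit in the nonlinearity. The key observation that renders the analysis soft is the 2D Sobolev embedding $H^{k-1}(\T)\hookrightarrow W^{1,\infty}(\T)$ available whenever $k\ge 4$, which allows the convective term $\bu\cdot\nabla\bu$ to be treated as a lower-order perturbation of the hyper-dissipative operator.

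Concretely, I would choose a Galerkin basis of divergence-free trigonometric polynomials $\{\mathbf{e}_m\}$ on $\T$ and solve the finite-dimensional ODE obtained by projecting \eqref{eq:1.12a} onto $V_N=\mathrm{span}\{\mathbf{e}_1,\dots,\mathbf{e}_N\}$. Testing the approximate equation with the solution $\bu^N$ kills the pressure and the convective term and, combined with Young's inequality and the norm equivalence $\|\bu\|_{H^k}^2\le C(\|\bu\|_{L^2}^2+\|\nabla^k\bu\|_{L^2}^2)$, yields a uniform bound in $L^\infty(0,T;L^2_\sigma)\cap L^2(0,T;H^k_\sigma)$ of the form appearing in \eqref{eq:1.11}. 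To control $\partial_t\bu^N$ in $L^2(0,T;(H^k_\sigma)')$ I pair with an arbitrary $\mathbf{v}\in H^k_\sigma$ and estimate each term; only the convective one is nontrivial, and after integration by parts it is bounded by $\|\bu^N\|_{L^2}^2\|\nabla\mathbf{v}\|_{L^\infty}\le C\|\bu^N\|_{L^2}^2\|\mathbf{v}\|_{H^k}$. Aubin--Lions then delivers strong convergence $\bu^N\to\bu$ in $L^2(0,T;H^{k-1}_\sigma)$ along a subsequence, which suffices to pass to the limit in the quadratic term, and the continuity $\bu\in C([0,T];L^2_\sigma)$ follows from \eqref{Gel}.

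For the continuous-dependence estimate \eqref{eq:1.16}, I would subtract the equations satisfied by two solutions $\bu^1$, $\bu^2$, obtaining an equation for $\bar\bu=\bu^1-\bu^2$ whose nonlinear contribution is $\bu^1\cdot\nabla\bar\bu+\bar\bu\cdot\nabla\bu^2$. Testing with $\bar\bu$ and exploiting the divergence-free condition on $\bu^2$ rewrites the cross term as $-\int_{\T}(\bar\bu\cdot\nabla)\bar\bu\cdot\bu^2\,dx$, which is bounded by $\frac{\nu}{2}\|\nabla\bar\bu\|_{L^2}^2+\frac{1}{2\nu}\|\bu^2\|_{L^\infty}^2\|\bar\bu\|_{L^2}^2$; this is precisely the mechanism by which the constant in \eqref{eq:1.16} ends up depending on $\|\bu^i\|_{L^2(0,T;L^\infty)}$, and Gronwall closes the argument, giving uniqueness in particular. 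For the improved regularity \eqref{eq:1.14} under $g\in L^2(0,T;L^2)$ and $\bu_0\in H^k_\sigma$, I would test the equation with $\partial_t\bu$ to obtain the $H^1(0,T;L^2)\cap C([0,T];H^k_\sigma)$ bound, absorbing $\int_{\T}(\bu\cdot\nabla)\bu\cdot\partial_t\bu\,dx$ via $\|\bu\|_{L^\infty}\le C\|\bu\|_{H^k}$ and the previously obtained $L^2(0,T;H^k)$ estimate; the $L^2(0,T;H^{2k})$ regularity then comes from applying the Leray projector to eliminate $\nabla P$, solving for $\delta(-\Delta)^k\bu$ from the equation, and invoking elliptic regularity on the torus, since every other term is now known to lie in $L^2(0,T;L^2)$.

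The argument is essentially routine given the hyper-dissipation, so there is no single ``hard step''; the only place requiring mild care is the continuous-dependence estimate, where the integration by parts has to be organized so that the $L^\infty$ norm falls on $\bu^2$ itself rather than on $\nabla\bu^2$, otherwise the stated dependence of the constant in \eqref{eq:1.16} would fail. A subsidiary point is that, since all constants are allowed to depend on $\delta$, the $2k$-order operator genuinely dominates and one never has to fight the Navier--Stokes term $\nu\Delta\bu$ or the convective term in any delicate way.
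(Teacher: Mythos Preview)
Your proposal is correct and matches the paper's own approach: the paper simply observes that the bilinear form $a(\bu,\mathbf{v})=\int_{\T}(\delta\,\mathcal{D}^k\bu\cdot\mathcal{D}^k\mathbf{v}+\bu\cdot\mathbf{v})\,dx$ with $\mathcal{D}^k=(-\Delta)^{k/2}$ is coercive on $H^k_\sigma(\T)$ and then defers the details to the classical hyper-viscous Navier--Stokes literature (Lions \cite{L69}) and to \cite[Proposition~4.1]{LT14}, which carry out precisely the Galerkin scheme you outline. Your sketch in fact supplies the details the paper omits, including the careful organization of the convective term in the difference estimate so that the $L^\infty$ norm falls on $\bu^2$ rather than on $\nabla\bu^2$.
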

\begin{proof}
The regularized system \eqref{eq:1.12a}--\eqref{eq:1.12d} was used for the well-posedness of the Navier--Stokes equations (see e.g., \cite{L69}).
Recalling that $k$ is even, then based on the coerciveness of
the bilinear form $a(\cdot,\cdot): H^k_\sigma(\T)\times H^k_\sigma(\T)\mapsto
\mathbb{R}$ (see e.g., \cite{brezis2010functional})
\begin{equation*}
  a(\mathbf{u},\mathbf{v})=\int_{\T}\Big( \delta \mathcal{D}^k \bu \cdot \mathcal{D}^k \mathbf{v} +\bu\cdot \mathbf{v}\Big)\,dx, \quad \text{with}\ \ \mathcal{D}^k=(-\Delta)^\frac{k}{2},
\end{equation*}
the proof can be carried out in a similar manner as in \cite[Proposition 4.1]{LT14}. Thus we omit the details here.
\end{proof}

Next, we consider a linear parabolic problem associated with \eqref{Q equb}.
\begin{lemma}\label{lemma-energy-estimate-1}
Suppose that the assumptions in Proposition \ref{approx1} are satisfied.
For any $T>0$ and $G\in L^2(0,T;H^1(\T,\mathcal{S}_0^{(2)}))$, the following linear problem
\begin{align}
&\partial_t Q_{ij}-\zeta \Delta Q_{ij}-2L_4 (  Q_{ij,l }(Q_0)_{l k})_{,k}=G_{ij},\quad\, \forall\, (x,t) \in \T\times(0,T),\label{Q equp}\\
&Q(x+e_i,t)=Q(x,t), \quad\qquad\qquad\qquad\qquad\;\;\ \,\forall\, (x,t) \in \T\times(0,T),\label{BCp}\\
&Q(x,0)=Q_0(x),\quad\qquad\qquad\qquad\qquad\qquad\;\;\ \
\forall\,x\in\T,\label{ICp}
\end{align}
admits a unique strong solution such that
\begin{align}
&Q\in H^1(0,T; H^1(\T))\cap C([0,T]; H^2(\T))\cap L^2(0,T; H^3(\T)),\label{regQp}\\
&Q\in \mathcal{S}_0^{(2)}\quad\text{a.e. in } \ \T\times (0, T)\nonumber
\end{align}
and
\begin{equation}\label{eq:1.15}
  \|Q\|_{H^1(0,T;H^{1}(\T))\cap  C([0,T]; H^2(\T)) \cap L^2(0,T;H^{3}(\T))} \leq C(\|Q_0\|_{{H^{2}}}+\|G\|_{L^2(0,T;H^1(\T))}).
\end{equation}
\end{lemma}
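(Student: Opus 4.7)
The plan is a standard Galerkin scheme combined with a priori estimates at three regularity levels. The crucial preliminary observation is that, under the smallness \eqref{eq:1.29}, the principal part of \eqref{Q equp} is uniformly elliptic. Using the symmetry of $Q_0$, we can rewrite
\begin{equation*}
-\zeta\Delta Q_{ij}-2L_4(Q_{ij,l}(Q_0)_{lk})_{,k}=-\bigl(\zeta\delta_{lk}+2L_4(Q_0)_{lk}\bigr)Q_{ij,lk}-2L_4(Q_0)_{lk,k}Q_{ij,l}.
\end{equation*}
Since $\zeta\ge 2\kappa$ by \eqref{zk} and $\|Q_0\|_{L^\infty}\le\sqrt{\eta_1}=\sqrt{C_1}\,\kappa/|L_4|$, fixing $C_1$ in \eqref{eta1} sufficiently small makes the matrix $\zeta I+2L_4 Q_0$ uniformly positive definite with coercivity constant at least $\zeta/2$. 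The remaining first-order coefficient $\nabla Q_0$ lies in $H^1(\T)$, which in two dimensions embeds into every $L^p$ with $p<\infty$, and this is enough for all subsequent estimates.

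I would then approximate by Galerkin truncation in a Fourier basis of $L^2(\T,\mathcal{S}_0^{(2)})$. Short-time existence of $Q^N$ is standard ODE theory, and three successive a priori estimates, uniform in $N$, both extend the lifespan to $[0,T]$ and carry enough regularity to pass to the limit. Concretely: (i) pairing the equation with $Q^N$ and using the coercivity above yields an $L^\infty_tL^2_x\cap L^2_tH^1_x$ bound; (ii) pairing with $-\Delta Q^N$, integrating by parts, and applying H\"older together with \eqref{H2} and Young's inequality gives the $L^\infty_tH^1_x\cap L^2_tH^2_x$ bound; (iii) pairing with $\Delta^2 Q^N$ (equivalently, differentiating the equation once in space and testing with $\nabla\Delta Q^N$) provides the $L^\infty_tH^2_x\cap L^2_tH^3_x$ bound required by \eqref{regQp}--\eqref{eq:1.15}. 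The time regularity $\partial_t Q\in L^2(0,T;H^1)$ then follows by reading the equation and using the previous bounds on the right-hand side; passage to the Galerkin limit is routine by weak-$*$ compactness and Aubin--Lions.

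The algebraic constraints of $\mathcal{S}_0^{(2)}$ propagate along the flow: the equation is invariant under $Q\mapsto Q^T$ and the source $G$ takes values in $\mathcal{S}_0^{(2)}$, so $Q-Q^T$ solves the homogeneous problem with zero data and vanishes; likewise $\tr Q$ satisfies a scalar parabolic equation with zero data and zero source and therefore vanishes. Uniqueness is immediate by an $L^2$ energy estimate on the difference of two solutions followed by Grönwall.

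The main obstacle is the highest-order estimate in step (iii). Once the equation is differentiated, the dangerous terms are of the form $\nabla Q_0\cdot\nabla^2 Q^N$ and $\nabla^2 Q_0\cdot\nabla Q^N$ paired with $\nabla\Delta Q^N$. I would arrange the integrations by parts so that at most two derivatives ever land on $Q_0$, and then control each piece by H\"older of the type $L^p\times L^{p'}\times L^2$, using the 2D embeddings $H^2\hookrightarrow L^\infty$ and $H^1\hookrightarrow L^p$ for all $p<\infty$, so that Young's inequality can absorb $\|\nabla\Delta Q^N\|_{L^2}^2$ into the coercive left-hand side. Everything else reduces to standard linear parabolic theory.
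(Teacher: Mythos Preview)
Your proposal is correct and follows essentially the same route as the paper: a Galerkin scheme, ellipticity of the principal part from the smallness \eqref{eq:1.29}, successive energy estimates obtained by testing against $Q$, $-\Delta Q$, and $\Delta^2 Q$ (the paper merges your first two into a single test with $Q-\Delta Q$), and then Gronwall plus comparison for $\partial_t Q$. Your discussion of the dangerous cross terms in step~(iii) and the propagation of the $\mathcal{S}_0^{(2)}$ constraints is accurate and matches what the paper does (or leaves implicit).
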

\begin{proof}
The assumptions \eqref{coercivity}, \eqref{eq:1.29} and the relation \eqref{zk} guarantee the ellipticity of the second order operator in equation \eqref{Q equp}
when $C_1$ is small enough. Thus
we can prove the existence of a unique strong solution that satisfies \eqref{regQp} by a standard Galerkin approximation.
Below we only show the validity of necessary a priori estimates.
Testing  \eqref{Q equp} with $Q-\Delta{Q}$ and integrating over $\T$, using integration by parts, \eqref{eq:1.27}--\eqref{eq:1.30}, H\"older's inequality and Young's inequality,
we see that
\begin{align*}
&\frac12\frac{d}{dt}\big(\|Q\|_{L^2}^2+\|\nabla{Q}\|_{L^2}^2)+\zeta\big(\|\nabla{Q}\|_{L^2}^2+\|\Delta{Q}\|_{L^2}^2)\\
&\quad =-2L_4\int_{\T}(Q_0)_{lk}Q_{ij,l}Q_{ij,k}dx -2L_4\int_{\T}(Q_0)_{lk}Q_{ij,lk}\Delta{Q}_{ij}dx\non\\
&\qquad -2L_4\int_{\T}(Q_0)_{lk,k}Q_{ij,l}\Delta{Q}_{ij}dx +\int_{\T}G_{ij}(Q_{ij}-\Delta{Q}_{ij}) dx\\
&\quad \leq 2|L_4|\|Q_0\|_{L^\infty}\|\nabla{Q}\|_{L^2}^2+ C|L_4|\|Q_0\|_{L^\infty} \|\Delta{Q}\|_{L^2}( \|\Delta{Q}\|_{L^2}+\|Q\|_{L^2})
\non\\
&\qquad +2|L_4|\|\nabla{Q}_0\|_{L^4}\|\nabla{Q}\|_{L^4}\|\Delta{Q}\|_{L^2}+\|G\|_{L^2}(\|Q\|_{L^2}+\|\Delta{Q}\|_{L^2})\\
&\quad
\leq C|L_4|\|Q_0\|_{L^\infty}(\|\nabla Q\|_{L^2}^2+\|\Delta{Q}\|_{L^2}^2\big)+ C|L_4|\|Q_0\|_{L^\infty}\|Q\|_{L^2}^2 \non\\
&\qquad +C\|Q_0\|_{L^\infty}^\frac12\|\Delta Q_0\|_{L^2}^\frac12\|\nabla{Q}\|_{L^2}^\frac12(\|\Delta{Q}\|_{L^2}^\frac12+\|Q\|_{L^2}^\frac12)\|\Delta{Q}\|_{L^2}\non\\
&\qquad +\|G\|_{L^2}(\|Q\|_{L^2}+\|\Delta{Q}\|_{L^2})\\
&\quad \leq \frac{\zeta}{2}\big(\|\nabla Q\|_{L^2}^2+ \|\Delta{Q}\|_{L^2}^2\big)+C\big(\|Q\|_{L^2}^2+\|\nabla{Q}\|_{L^2}^2\big)+C\|G\|_{L^2}^2,
\end{align*}
where in above estimate, we employed \eqref{eq:1.29} with sufficiently small $C_1$ (but only depending on $\T$). By Gronwall's lemma, we get
\begin{equation}
\|Q\|_{L^\infty(0,T;H^1(\T))}+ \|Q\|_{L^2(0,T;H^2(\T))} \leq C(\|Q_0\|_{H^1} + \|G\|_{L^2(0, T; L^2(\T))}).\label{HHe12}
\end{equation}
Next, testing \eqref{Q equp} with $\Delta^2{Q}$, using \eqref{eq:1.30}, \eqref{eq:1.31} and the elliptic estimates \eqref{H2}--\eqref{H3}, we deduce that
\begin{align*}
&\frac12\frac{d}{dt}\|\Delta{Q}\|^2+\zeta\|\nabla\Delta{Q}\|^2\\
&\quad =-2L_4\int_{\T}(Q_{ij,l}(Q_0)_{lk})_{,km}(\Delta{Q}_{ij})_{,m}dx-\int_{\T}G_{ij,k}(\Delta{Q}_{ij})_{,k}dx\\
&\quad \leq
2|L_4|\|Q_0\|_{L^\infty}\| Q\|_{H^3}\|\nabla\Delta{Q}\|_{L^2}
+2|L_4|\|\nabla{Q}_0\|_{L^4}\|Q\|_{W^{2,4}}\|\nabla\Delta{Q}\|_{L^2}
\\
&\qquad +2|L_4|\|Q_0\|_{H^2}\|\nabla{Q}\|_{L^\infty}\|\nabla\Delta{Q}\|_{L^2}
+\|\nabla{G}\|_{L^2}\|\nabla\Delta{Q}\|_{L^2}\\
&\quad \leq
C|L_4|\|Q_0\|_{L^\infty}\|\nabla\Delta{Q}\|_{L^2}(\|\nabla\Delta{Q}\|_{L^2}+\|Q\|_{L^2})\non\\
&\qquad +C|L_4|\|\nabla{Q}_0\|_{L^4}(\|\Delta{Q}\|_{L^2}+\|Q\|_{L^2})^\frac12(\|\nabla\Delta{Q}\|_{L^2}+\|Q\|_{L^2})^\frac12\|\nabla\Delta{Q}\|_{L^2}
\\
&\qquad +C|L_4|\|Q_0\|_{H^2}\|\nabla{Q}\|_{L^2}^\frac12(\|\nabla\Delta{Q}\|_{L^2} +\|Q\|_{L^2})^\frac12\|\nabla\Delta{Q}\|_{L^2}+\|\nabla{G}\|_{L^2}\|\nabla\Delta{Q}\|_{L^2}\\
&\quad \leq\frac{\zeta}{2}\|\nabla\Delta{Q}\|_{L^2}^2+C\|\Delta Q\|_{L^2}^2+C(\|\nabla{G}\|_{L^2}^2+ \|Q\|_{H^1}^2),
\end{align*}
where in order to obtain the last inequality, we again require $C_1$ to be suitably small. Using Gronwall's lemma and the estimate \eqref{HHe12}, we obtain
\begin{equation*}
\|\Delta Q\|_{L^\infty(0,T;L^2(\T))}+ \|\nabla \Delta Q\|_{L^2(0,T;L^2(\T))} \leq C(\|Q_0\|_{H^2} + \|G\|_{L^2(0, T; H^1(\T))}).
\end{equation*}
Finally, keeping above estimates in mind, using the elliptic estimates and a comparison argument for $\partial_t Q$, we arrive at the conclusion \eqref{eq:1.15}.
\end{proof}

As a corollary, we can easily derive some estimates on the induced nonlinear stress terms:
\begin{lemma}\label{lemma-energy-estimate-2}
Let $Q$ be the solution to problem \eqref{Q equp}--\eqref{ICp} given by Lemma \ref{lemma-energy-estimate-1}. Concerning the stress tensors $\sigma^s$, $\sigma^a$ that are defined via \eqref{tensors}, we have
\begin{equation}
\|\nabla\cdot(\sigma^a+\sigma^s)\|_{L^2(0,T; L^2(\T))}\leq C(\|Q_0\|_{H^2}, \|G\|_{L^2(0,T;H^1(\T))}).
\label{H2H3-integrability}
\end{equation}
\end{lemma}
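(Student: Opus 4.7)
The plan is to extract from Lemma \ref{lemma-energy-estimate-1} the regularity
$Q\in L^\infty(0,T;H^2(\T))\cap L^2(0,T;H^3(\T))$, and then bound each term
produced by differentiating \eqref{tensora}--\eqref{tensors} in $L^2(0,T;L^2(\T))$
via H\"older's inequality, the two-dimensional Sobolev embeddings
$H^2(\T)\hookrightarrow L^\infty(\T)$ and $H^1(\T)\hookrightarrow L^p(\T)$ for every $p<\infty$,
and the interpolation inequalities \eqref{eq:1.27}--\eqref{eq:1.31}. Concretely, these
ingredients provide $Q\in L^\infty_t L^\infty_x$, $\nabla Q\in L^\infty_tL^p_x\cap L^2_tL^\infty_x$
for any $p<\infty$, $\nabla^2 Q\in L^\infty_tL^2_x\cap L^2_tL^p_x$ for any $p<\infty$, and
$\nabla^3 Q\in L^2_tL^2_x$, all quantitatively controlled by $\|Q_0\|_{H^2}$ and
$\|G\|_{L^2(0,T;H^1(\T))}$ through \eqref{eq:1.15}.

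For $\sigma^s$, a direct computation of $\partial_j\sigma^s_{ij}$ from \eqref{tensors}
produces terms of schematic shape $\nabla^2 Q\cdot\nabla Q$ (from the $L_1,L_2,L_3$ quadratic
contributions) as well as $Q\cdot\nabla^2 Q\cdot\nabla Q$ and $(\nabla Q)^3$ (from the $L_4$ cubic
contribution). Terms of the first type belong to $L^2_tL^2_x$ by pairing
$\nabla^2 Q\in L^2_tL^4_x$ with $\nabla Q\in L^\infty_tL^4_x$; the $L_4$ pieces are controlled by
absorbing the extra factor $Q\in L^\infty_tL^\infty_x$, or, for $(\nabla Q)^3$, by H\"older with
$\nabla Q\in L^\infty_tL^6_x$, giving even the stronger bound in $L^\infty_tL^2_x$.

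For $\sigma^a$, observe that the scalar Lagrange multiplier $\lambda\mathbb{I}$ commutes with $Q$,
and that the scalar pieces $-aQ_{ij}$, $-c\tr(Q^2)Q_{ij}$ and $\frac{L_4}{2}|\nabla Q|^2\delta_{ij}$
in \eqref{eqnQExpansion} commute with $Q$ as well; hence $\sigma^a$ reduces to the commutator of $Q$
with the genuinely second-order part of $\HH+\lambda\mathbb{I}+\mu-\mu^T$. Taking one more divergence
then yields a leading contribution of the form $Q\cdot\nabla\Delta Q$, together with
$\nabla Q\cdot\Delta Q$ and various schematic products $Q\cdot\nabla^2 Q\cdot\nabla Q$,
$Q\cdot(\nabla Q)^2$, and $(\nabla Q)^3$. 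The leading term $Q\cdot\nabla\Delta Q$ lies in
$L^2_tL^2_x$ because $Q\in L^\infty_tL^\infty_x$ and $\nabla\Delta Q\in L^2_tL^2_x$; the term
$\nabla Q\cdot\Delta Q$ is handled using $\nabla Q\in L^2_tL^\infty_x$, which follows from
\eqref{eq:1.31}, together with $\Delta Q\in L^\infty_tL^2_x$; the remaining lower-order pieces are
controlled exactly as in the analysis of $\sigma^s$. The main obstacle is precisely this top-order
term $Q\cdot\nabla\Delta Q$: it is the reason one simultaneously needs the $L^2_tH^3_x$-control
and the $L^\infty$-bound on $Q$ (the latter furnished by $H^2\hookrightarrow L^\infty$ in two
dimensions), both of which are supplied by \eqref{eq:1.15}. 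Assembling these schematic estimates
then produces \eqref{H2H3-integrability}.
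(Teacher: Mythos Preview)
Your proposal is correct and follows essentially the same route as the paper: both use the regularity $Q\in L^\infty(0,T;H^2)\cap L^2(0,T;H^3)$ from Lemma~\ref{lemma-energy-estimate-1} and estimate the divergences of $\sigma^s$ and $\sigma^a$ term by term via H\"older's inequality and two-dimensional Sobolev embeddings. Your H\"older pairings differ slightly from the paper's (you place $\nabla^2 Q$ in $L^2_tL^4_x$ and $\nabla Q$ in $L^\infty_tL^4_x$, whereas the paper puts $\nabla Q$ into $L^\infty_x$ via $H^3$), and you spell out the commutator reduction for $\sigma^a$ in more detail than the paper's ``similar manner'', but the argument is the same in substance.
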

\begin{proof}
By virtue of \eqref{tensors}, H\"older's inequality and the Sobolev embedding theorem, we obtain
\begin{align*}
\|\nabla\cdot\sigma^s\|_{L^2}
&\leq 4(|L_1|+|L_2|+|L_3|)\|\nabla{Q}\|_{L^\infty}\|Q\|_{H^2}+2|L_4|\|\nabla{Q}\|_{L^6}^3\non\\
&\quad +4|L_4|\|Q\|_{L^\infty}\|\nabla{Q}\|_{L^2}\|Q\|_{H^2}\\
&\leq
C(\|Q\|_{H^2}\|Q\|_{H^3}+\|Q\|_{H^2}^3+\|Q\|_{H^1}\|Q\|_{H^2}^2)\\
&\leq C\|Q\|_{H^2}\|Q\|_{H^3}+ C\|Q\|_{H^2}^3,
\end{align*}
which together with \eqref{eq:1.15} yields that
$$
 \|\nabla\cdot\sigma^s\|_{L^2(0,T; L^2(\T))} \leq C(\|Q_0\|_{H^2}, \|G\|_{L^2(0,T;H^1(\T))}).
$$
Recalling the expression \eqref{eqnQExpansion}, we see that $\|\nabla\cdot\sigma^a\|_{L^2(0,T; L^2(\T))}$ can be estimated in a similar manner. The proof is complete.
\end{proof}

\medskip
\noindent\textbf{Step 3}.  Construction of a nonlinear mapping $\mathcal{Y}$.

\medskip

For arbitrary but fixed $T>0$, we proceed to define a mapping $\mathcal{Y}$ on $L^2(0,T;H^1(\T,\mathcal{S}_0^{(2)}))$ based on the results in Step 2.
To this end, for any given matrix-valued function $G\in L^2(0,T;H^1(\T,\mathcal{S}_0^{(2)}))$, thanks to Lemma \ref{lemma-energy-estimate-1}, problem \eqref{Q equp}--\eqref{ICp}
is uniquely solvable and its solution $Q=Q[G]$ satisfies \eqref{regQp}. Then by Lemma \ref{lemma-energy-estimate-2}, we also have
$\nabla\cdot (\sigma^a(Q)+\sigma^s(Q))\in L^2(0,T;L^2(\T))$, which enables us to apply Lemma \ref{lemma1} to conclude that problem \eqref{eq:1.12a}--\eqref{eq:1.12d} admits a unique global weak solution
$\bu=\bu[G]$ with the regularity property \eqref{regup}. Besides, by Lemma \ref{lemma1}, the solution $\bu$ of problem \eqref{eq:1.12a}--\eqref{eq:1.12d} fulfills
\begin{equation}
\label{eq:1.03}
{\|\bu\|_{L^\infty(0,T; L^2(\T))\cap L^2(0,T;H^{k}(\T))}\leq C\big(\|\nabla\cdot(\sigma^a+\sigma^s)\|_{L^2(0,T;L^2(\T))}+\|\bu_0\|_{L^2}\big)}.
\end{equation}
Hence, it follows from Lemma \ref{lemma-energy-estimate-2}, \eqref{eq:1.15} and \eqref{H2H3-integrability} that
\begin{align}
&\|\bu\|_{{L}^\infty(0,T;L^2(\T))\cap L^2(0,T;H^k(\T))}+ \|Q\|_{L^\infty(0,T;H^2(\T))\cap L^2(0,T;H^3(\T))}\non\\
&\quad \leq C(\|G\|_{L^2(0,T;H^1(\T))}, \| \bu_0\|_{L^2}, \|Q_0\|_{H^2}).\label{eq:1.03a}
\end{align}

Next, we verify that
$$
\mathcal{G}(\bu,Q)+2L_4(Q_{ij,l }Q_{lk})_{,k}-2L_4(Q_{ij,l}(Q_0)_{l k})_{,k}\in L^2(0,T;H^{1}(\T,\mathcal{S}_0^{(2)})),
$$
where $\mathcal{G}$ is given by \eqref{eq:1.02}. First, since $Q\in \mathcal{S}_0^{(2)}$ a.e. in $\T\times(0,T)$,
it is straightforward to check that $\mathcal{G} \in \mathcal{S}_0^{(2)}$ a.e. in $\T\times(0,T)$.
In view of \eqref{eq:1.02}, we deduce from \eqref{eq:1.15}, \eqref{eq:1.03a}, H\"older's inequality and the Sobolev embedding theorem that
\begin{align}
&\|\mathcal{G}(\bu,Q)\|_{H^1}\non\\
&\quad \leq C\big(\|\bu\cdot\nabla{Q}\|_{H^1}+\|\nabla{\bu}Q\|_{H^1}+\|(1+\tr(Q^2))Q\|_{H^1}+\|\nabla{Q}\nabla{Q}\|_{H^1}\big)\non\\
&\quad \leq
C\|\bu\|_{L^4}\|\nabla{Q}\|_{L^4}+C\|\nabla{\bu}\|_{L^4}\|Q\|_{W^{1,4}}+C\|\bu\|_{L^\infty}\|Q\|_{H^2}+C\|\bu\|_{H^2}\|Q\|_{L^\infty}\non\\
&\qquad+C(1+\|Q\|_{L^\infty}^2)\|Q\|_{H^1}+C\|\nabla{Q}\|_{L^4}^2+C\|\nabla{Q}\|_{L^4}\|Q\|_{W^{2,4}}\non\\
&\quad \leq
C(\|G\|_{L^2(0,T;H^1(\T))},\|Q_0\|_{H^2})\Big(1+\|\bu\|_{W^{1,4}}+\|\bu\|_{L^\infty}+\|\bu\|_{H^2}+\|Q\|_{H^3}^\frac12\Big)\non\\
&\quad \leq
C(\|G\|_{L^2(0,T;H^1(\T))},\|Q_0\|_{H^2})\Big(1+\|\bu\|_{L^2}^{\frac{k-2}{k}}\|\bu\|_{H^k}^{\frac2k}+\|Q\|_{H^3}^\frac12\Big)\non\\
&\quad \leq
C(\|G\|_{L^2(0,T;H^1(\T))},\|\bu_0\|_{L^2},\|Q_0\|_{H^2})\Big(1+\|\bu\|_{H^k}^\frac{2}{k}+\|Q\|_{H^3}^\frac12\Big),\label{estG}
\end{align}
which together with \eqref{eq:1.03a} further implies $\mathcal{G}(\bu,Q)\in L^2(0,T; H^1(\T))$.

It remains to estimate
$2|L_4|\|(Q_{ij,l}Q_{lk})_{,k}-(Q_{ij,l}(Q_0)_{lk})_{,k}\|_{L^2(0,T;H^1(\T))}$.
Note that
\begin{align}\label{key-estimate}
2|L_4|&\|(Q_{ij,l}Q_{lk})_{,k}-(Q_{ij,l}(Q_0)_{lk})_{,k}\|_{L^2(0,T;H^1(\T))}\non\\
&\leq C\|Q\|_{L^2(0,T;H^3(\T))}\|Q-Q_0\|_{L^{\infty}(0,T;L^\infty(\T))}\non\\
&\quad +C\|Q\|_{L^2(0,T;W^{2,4}(\T))}\|Q-Q_0\|_{L^{\infty}(0,T;W^{1,4}(\T))}\non\\
&\quad +C\|\nabla{Q}\|_{L^2(0,T;L^\infty(\T))}\|\nabla (Q-Q_0)\|_{L^{\infty}(0,T;H^1(\T))}\non\\
&:=\sum_{i=1}^3 J_i,
\end{align}
with obvious notation. Using Agmon's inequality in two dimensions and the interpolation inequality \eqref{Gel} (keeping in mind that $(Q-Q_0)|_{t=0}=0$),
we have
\begin{align*}
 J_1&\leq C\|Q\|_{L^2(0,T;H^3(\T))}\|Q-Q_0\|^{\frac12}_{L^\infty(0,T;L^2(\T))}\|Q-Q_0\|_{L^\infty(0,T;H^2(\T))}^{\frac12}\\
    &\leq C(\|G\|_{L^2(0,T;H^1(\T))},\|Q_0\|_{H^2})\|Q-Q_0\|^{\frac12}_{L^\infty(0,T;H^1(\T))}\\
    &\leq C(\|G\|_{L^2(0,T;H^1(\T))},\|Q_0\|_{H^2})\|Q-Q_0\|^{\frac14}_{L^2(0,T;H^2(\T))}\|Q-Q_0\|^{\frac14}_{H^1(0,T;L^2(\T))}\\
    &\leq C(\|G\|_{L^2(0,T;H^1(\T))},\|Q_0\|_{H^2})\|Q-Q_0\|^{\frac14}_{L^\infty(0,T;H^2(\T))} T^{\frac18}\\
    &\leq C(\|G\|_{L^2(0,T;H^1(\T))},\|Q_0\|_{H^2}) T^{\frac18}.
\end{align*}
Next, it follows from the interpolation inequality and \eqref{eq:1.15} that
\begin{align*}
J_2 &\leq C\|Q\|_{L^\infty(0,T; H^2(\T))}^\frac12\|Q\|_{L^1(0,T;H^3(\T))}^{\frac12}\|Q-Q_0\|_{L^\infty(0,T;H^2(\T))}\non\\
    &\leq C(\|G\|_{L^2(0,T;H^1(\T))},\|Q_0\|_{H^2})\|Q\|_{L^1(0,T;H^3(\T))}^{\frac12}\non\\
    &\leq C(\|G\|_{L^2(0,T;H^1(\T))},\|Q_0\|_{H^2})\|Q\|_{L^2(0,T;H^3(\T))}^{\frac12} T^{\frac14}\\
    &\leq C(\|G\|_{L^2(0,T;H^1(\T))},\|Q_0\|_{H^2}) T^{\frac14},
\end{align*}
and in a similar manner,
\begin{align*}
J_3&\leq C\|\nabla Q\|_{L^\infty(0,T; L^2(\T))}^\frac12\|\nabla Q\|_{L^1(0,T;H^2(\T))}^{\frac12}\|Q-Q_0\|_{L^{\infty}(0,T;H^2(\T))}\\
   &\leq C(\|G\|_{L^2(0,T;H^1(\T))},\|Q_0\|_{H^2}) T^{\frac14}.
\end{align*}
As a consequence, we obtain
\begin{align}\label{key-estimateaa}
&2|L_4|\|(Q_{ij,l}Q_{lk})_{,k}-(Q_{ij,l}(Q_0)_{lk})_{,k}\|_{L^2(0,T;H^1(\T))}\non\\
&\quad \leq C(\|G\|_{L^2(0,T;H^1(\T))},\|Q_0\|_{H^2})(T^{\frac18}+T^{\frac14}),
\end{align}
which implies $2L_4(Q_{ij,l }Q_{lk})_{,k}-2L_4(Q_{ij,l}(Q_0)_{l k})_{,k}\in L^2(0,T;H^{1}(\T))$.

Therefore, we see that the following nonlinear mapping defined by
\begin{align}\label{def of F}
\mathcal{Y}: L^2(0,T;H^{1}(\T,\mathcal{S}_0^{(2)})) &\to L^2(0,T;H^{1}(\T,\mathcal{S}_0^{(2)}))\non\\
G&\mapsto \mathcal{Y}(G)=\mathcal{G}(\bu, Q)+2L_4 (Q_{ij,l }Q_{l k})_{,k}-2L_4 (  Q_{ij,l }(Q_0)_{l k})_{,k}
\end{align}
is well defined.

\bigskip

\noindent\textbf{Step 4}. The mapping $\mathcal{Y}$ is a contraction for sufficiently small $T>0$.

\medskip
To construct local-in-time solutions to problem \eqref{NSEa}--\eqref{ICa}, it suffices to show that $\mathcal{Y}$ is a contraction on certain closed ball
\begin{equation}\label{eq:1.32}
  B_{T,R}:=\big\{G\in L^2(0,T;H^{1}(\T,\mathcal{S}_0^{(2)}))\,:\ \|G\|_{L^2(0,T;H^{1}(\T))}\leq R\big\}.
\end{equation}
Here, we simply take
$$R=\|\bu_0\|_{L^2} + \|Q_0\|_{H^2}+1.$$

We first show that for certain sufficiently small time $T_0>0$
(depending on $R$), $\mathcal{Y}$ maps $B_{T_0,R}$ into itself. By
\eqref{estG}, we can find $T_1>0$ sufficiently small such that (recalling that $k\geq 4$)
\begin{align}
& \|\mathcal{G}(\bu,Q)\|_{L^2(0,T_1;H^1(\T))}\non\\
&\quad \leq C(R,\|\bu_0\|_{L^2},\|Q_0\|_{H^2})\Big(T_1^\frac12 +\|\bu\|_{L^1(0,T_1;H^k(\T))}^\frac12+\|Q\|_{L^1(0,T_1; H^3(\T))}^\frac12\Big)\non\\
&\quad \leq C(R,\|\bu_0\|_{L^2},\|Q_0\|_{H^2})\Big(T_1^\frac12 + T_1^\frac14\|\bu\|_{L^2(0,T_1;H^k(\T))}^\frac12+T_1^\frac14\|Q\|_{L^2(0,T_1; H^3(\T))}^\frac12\Big)  \leq\frac{R}{2}.\label{R-estimate-1}
\end{align}
On the other hand, due to \eqref{key-estimateaa}, there exists a small $T_2>0$ such that
\begin{align}
&2|L_4|\|(Q_{ij,l}Q_{lk})_{,k}-( Q_{ij,l}(Q_0)_{lk})_{,k}\|_{L^2(0,T_2;H^1(\T))} \leq  C(R,\|Q_0\|_{H^2})(T_2^{\frac18}+T_2^{\frac14})\leq \frac{R}{2}.\label{R-estimate-2}
\end{align}
Thus, we conclude from \eqref{R-estimate-1} and \eqref{R-estimate-2} that for $T_0=\min\{T_1, T_2\}$, the nonlinear mapping
$\mathcal{Y}$ maps $B_{T_0,R}$ into itself.

Next, we show that $\mathcal{Y}$ is actually a contraction. For any $G_i\in B_{T,R}$ ($i=1, 2$), let $(\bu_i, Q_i)$ be the corresponding
solution to the following problem with $G=G_i$:
\begin{equation}\label{equ-1-2}
\begin{cases}
 \p_t \bu +\mathcal{L}_\delta \bu-\nu\Delta{\bu}+\nabla P = - \bu\cdot\nabla{\bu}+\nabla\cdot(\sigma^a+\sigma^s),\quad \forall\, (x,t) \in \T\times(0,T),\\
 \nabla \cdot \bu=0,\qquad\qquad\qquad\qquad\qquad\qquad\qquad\qquad\qquad\quad\;\, \forall\, (x,t) \in \T\times(0,T),\\
 \p_t Q -\zeta \Delta Q-2L_4 (Q_{ij,l}(Q_0)_{lk})_{,k}=G,\quad\qquad\qquad\quad\;\;\;\ \,\forall\, (x,t) \in \T\times(0,T),\\
 \bu(x+e_i,t) = \bu(x,t), \quad Q(x+e_i,t)=Q(x,t), \quad\qquad\;\;\ \,\forall\, (x,t) \in \T\times(0,T),\\
\bu(x,0)=\bu_0(x),\;\; \text{with}\ \nabla \cdot \bu_0=0,\;\;
Q(x,0)=Q_0(x),\quad\,\, \forall\, x \in \T,
\end{cases}
\end{equation}
where $\sigma^a$ and $\sigma^s$ are given by \eqref{tensora}, \eqref{tensors}, respectively.
Denote
$$\hat{\bu}=\bu_1-\bu_2,\quad \hat{Q}=Q_1-Q_2,\quad \hat{P}=P_1-P_2,\quad \hat{G}=G_1-G_2.$$
 Then the difference functions $(\hat{\bu}, \hat{P}, \hat{Q})$ satisfy
\begin{equation}\label{equ-contraction}
\begin{cases}
 \p_t \hat{\bu} +\mathcal{L}_\delta\hat{\bu}-\nu\Delta\hat{\bu}+\nabla\hat{P}=-\bu_1\cdot \nabla\hat{\bu}-\hat{\bu}\cdot \nabla{\bu}_2+\nabla\cdot(\hat{\sigma}^a+\hat{\sigma}^s),\quad \forall\, (x,t) \in \T\times(0,T),\\
 \nabla \cdot \hat{\bu}=0,\qquad \qquad \qquad \qquad \qquad \qquad \qquad \qquad \qquad \qquad \qquad \qquad \ \,\forall\, (x,t) \in \T\times(0,T),\\
 \p_t \hat{Q} -\zeta\Delta\hat{Q}-2L_4(\hat{Q}_{ij,l}(Q_0)_{lk})_{,k}=\hat{G}, \quad\qquad\qquad\qquad\qquad\qquad\quad\ \forall\, (x,t) \in \T\times(0,T),\\
\hat{\bu}(x+e_i,t) = \hat{\bu}(x,t), \quad \hat{Q}(x+e_i,t)=\hat{Q}(x,t), \quad\qquad\qquad\qquad\quad\;\;\ \,\forall\, (x,t) \in \T\times(0,T),\\
\hat{\bu}(x,0)=0,\quad Q(x,0)=0,\quad\qquad\qquad\qquad\qquad\qquad\qquad\qquad\qquad\ \, \forall\, x \in \T,
\end{cases}
\end{equation}
where
\begin{align*}
  &\hat{\sigma}^a=\sigma^a(Q_1)-\sigma^a(Q_2),\quad \hat{\sigma}^s=\sigma^s(Q_1)-\sigma^s(Q_2).
\end{align*}
First, by Lemma \ref{lemma-energy-estimate-1}, we have the estimate for $\hat{Q}$:
\begin{equation}\label{contraction-estimate-1}
  \|\hat{Q}\|_{H^1(0,T;H^{1}(\T))\cap C([0,T]; H^2(\T))\cap L^2(0,T;H^{3}(\T))}\leq C\|\hat{G}\|_{L^2(0,T;H^1(\T))}.
\end{equation}
Denote for simplicity $\widetilde{\HH}=\HH+\lambda\mathbb{I}+\mu-\mu^T$. Then we deduce from the estimate \eqref{eq:1.15} for $Q_1$, $Q_2$ that
\begin{align}
&\|\nabla\cdot\hat{\sigma}^a\|_{L^2}\non\\
&\quad\leq  \|\nabla\cdot(\hat{Q}\widetilde{\mathcal{H}}(Q_1))\|_{L^2}+ \|\nabla \cdot(Q_2(\widetilde{\mathcal{H}}(Q_1)-\widetilde{\mathcal{H}}(Q_2)))\|_{L^2}
\non\\
&\qquad +\|\nabla\cdot(\widetilde{\mathcal{H}}(Q_1)\hat{Q})\|_{L^2}+ \|\nabla \cdot((\widetilde{\mathcal{H}}(Q_1)-\widetilde{\mathcal{H}}(Q_2))Q_2)\|_{L^2}\non\\
&\quad \leq C\|\hat{Q}\|_{L^\infty}\|\nabla \widetilde{\mathcal{H}}(Q_1)\|_{L^2}
+ C \|\nabla\hat{Q}\|_{L^\infty}\|\widetilde{\mathcal{H}}(Q_1)\|_{L^2}\non\\
&\qquad + C\|\nabla Q_2\|_{L^\infty}\|\widetilde{\mathcal{H}}(Q_1)-\widetilde{\mathcal{H}}(Q_2)\|_{L^2}
+ C\|Q_2\|_{L^\infty}\|\nabla(\widetilde{\mathcal{H}}(Q_1)-\widetilde{\mathcal{H}}(Q_2))\|_{L^2}\non\\
&\quad \leq
C\|\hat{Q}\|_{L^\infty}\left(\|\nabla \Delta Q_1\|_{L^2}+\|\nabla Q_1\|_{L^\infty}\| Q_1\|_{H^2}+\|Q_1\|_{L^\infty}\|Q_1\|_{H^3}\right) \non\\
&\qquad +C\|\hat{Q}\|_{L^\infty}(1+\|Q_1\|_{L^\infty}^2)\|\nabla Q_1\|_{L^2}+C \|\nabla \hat{Q}\|_{L^\infty}(\|Q_1\|_{L^2}+\|Q_1\|_{L^6}^3)\non\\
&\qquad +C \|\nabla \hat{Q}\|_{L^\infty}\left(\|\Delta Q_1\|_{L^2}+\|\nabla Q_1\|_{L^4}^2+\|Q_1\|_{L^\infty}\| Q_1\|_{H^2}\right)\non\\
&\qquad +C \|\nabla Q_2\|_{H^2}\big[\|\Delta \hat{Q}\|_{L^2}+(\|\nabla Q_1\|_{L^4}+\|\nabla Q_2\|_{L^4})\|\nabla \hat{Q}\|_{L^4}\big]\non\\
&\qquad +C \|\nabla Q_2\|_{H^2}\big(\|\hat{Q}\|_{L^\infty}\|Q_1\|_{H^2}+\|Q_2\|_{L^\infty}\|\hat{Q}\|_{H^2}\big)\non\\
&\qquad +C \|\nabla Q_2\|_{H^2}(1+\|Q_1\|_{L^\infty}^2+\|Q_2\|_{L^\infty}^2)\|\hat{Q}\|_{L^2}\non\\
&\qquad +C \|Q_2\|_{L^\infty}\big[\|\nabla \Delta \hat{Q}\|_{L^2}+ \|\hat{Q}\|_{W^{2,4}}(\|\nabla Q_1\|_{L^4}+\|\nabla Q_2\|_{L^4})\big]\non\\
&\qquad +C \|Q_2\|_{L^\infty}\big[\|\nabla \hat{Q}\|_{L^\infty}(\|Q_1\|_{H^2}+\| Q_2\|_{H^2}) + \|Q_1\|_{H^3}\|\hat{Q}\|_{L^\infty}\big]\non\\
&\qquad +C \|Q_2\|_{L^\infty}^2\|\hat{Q}\|_{H^3} + C \|Q_2\|_{L^\infty} (1+\|Q_1\|_{L^\infty}^2+\|Q_2\|_{L^\infty}^2)\|\nabla \hat{Q}\|_{L^2}\non\\
&\qquad +C \|Q_2\|_{L^\infty}(\|Q_1\|_{L^\infty}+\|Q_2\|_{L^\infty})(\|\nabla Q_1\|_{L^2}+\|\nabla Q_2\|_{L^\infty})\|\hat{Q}\|_{L^\infty}\non\\
&\quad \leq C(R,\|Q_0\|_{H^2})\|\hat{Q}\|_{H^3}+C(R,\|Q_0\|_{H^2})\big(\|Q_1\|_{H^3}+\|Q_2\|_{H^3}\big)\|\hat{Q}\|_{H^2},\non
\end{align}
and henceforth
\begin{align}\label{contraction-estimate-2}
&\|\nabla\cdot\hat{\sigma}^a\|_{L^2(0,T;L^2(\T))}\non\\
&\quad \leq C(R,\|Q_0\|_{H^2})\|\hat{Q}\|_{L^2(0,T;H^3(\T))}\non\\
&\qquad +C(R,\|Q_0\|_{H^2})\big(\|Q_1\|_{L^2(0,T;H^3(\T))}+\|Q_2\|_{L^2(0,T;H^3(\T))}\big)\|\hat{Q}\|_{L^\infty(0,T;H^2(\T))}\non\\
&\quad \leq C(R,\|Q_0\|_{H^2})\|\hat{G}\|_{L^2(0,T;H^1(\T))}.
\end{align}
Analogously, we have
\begin{align}
&\|\nabla\cdot\hat{\sigma}^s\|_{L^2}\non\\
&\quad \leq C\big(\|Q_1\|_{H^2}+\|Q_2\|_{H^2}\big)\|\nabla \hat{Q}\|_{L^\infty}+ C\big(\|\nabla Q_1\|_{L^\infty}+\|\nabla Q_2\|_{L^\infty}\big)\|\hat{Q}\|_{H^2}\non\\
&\qquad + C|L_4|\|\hat{Q}\|_{L^\infty}\|Q_1\|_{H^2}\|\nabla Q_1\|_{L^\infty} +C|L_4|\|\nabla \hat{Q}\|_{L^\infty}\big(\|\nabla Q_1\|_{L^4}^2+\|\nabla Q_2\|_{L^4}^2\big)\non\\
&\qquad + C|L_4|\|Q_2\|_{L^\infty}\|\nabla \hat{Q}\|_{L^\infty}\big(\|Q_1\|_{H^2}+\|Q_2\|_{H^2}\big)\non\\
&\qquad + C|L_4|\|Q_2\|_{L^\infty}\|\hat{Q}\|_{H^2}\big(\|\nabla Q_1\|_{L^\infty}+\|\nabla Q_2\|_{L^\infty}\big)\non\\
&\quad \leq C(R,\|Q_0\|_{H^2})\|\hat{Q}\|_{H^3}+C(R,\|Q_0\|_{H^2})\big(\|Q_1\|_{H^3}+\|Q_2\|_{H^3}\big)\|\hat{Q}\|_{H^2},\non
\end{align}
which yields
\begin{equation}\label{contraction-estimate-3}
\|\nabla\cdot\hat{\sigma}^s\|_{L^2(0,T;L^2(\T))}\leq C(R,\|Q_0\|_{H^2})\|\hat{G}\|_{L^2(0,T;H^1(\T))}.
\end{equation}
Back to the first equation of \eqref{equ-contraction}, we infer from
Lemma \ref{lemma1} (i.e., \eqref{eq:1.16}) together with the estimates \eqref{contraction-estimate-2} and \eqref{contraction-estimate-3} that
\begin{align}
\|\hat{\bu}\|_{C([0,T];L^2(\T))\cap L^2(0,T;H^{k}(\T))}&\leq
C\|\nabla\cdot(\hat{\sigma}^a+\hat{\sigma}^s)\|_{L^2(0,T;L^2(\T))}\non\\
&\leq C(R,\|Q_0\|_{H^2})\|\hat{G}\|_{L^2(0,T;H^1(\T))}.
\label{contraction-estimate-4}
\end{align}
As a consequence, keeping in mind the assumption $k\geq 4$, we deduce that
\begin{align*}
&\|\mathcal{G}(\bu_1,Q_1)-\mathcal{G}(\bu_2,Q_2)\|_{H^1}\\
&\quad \leq C\|\hat{\bu}\|_{W^{1,4}}\|\nabla{Q}_1\|_{L^4}+C\|\hat{\bu}\|_{L^\infty}\| Q_1\|_{H^2}+C\|\nabla {\bu}_2\|_{L^2}\|\nabla\hat{Q}\|_{L^\infty}
\non\\
&\qquad +C\|\bu_2\|_{L^4}\|\nabla \hat{Q}\|_{W^{1,4}}
+C\|\hat{\bu}\|_{H^2}\|Q_1\|_{L^\infty}
+ C\|\nabla \bu_2\|_{H^1}\|\hat{Q}\|_{L^\infty}\\
&\qquad +C\| Q_1\|_{W^{1,\infty}}\|\nabla \hat{\bu}\|_{L^2}
+C(1+\|Q_1\|_{L^\infty}^2+\|Q_2\|_{L^\infty}^2)\|\hat{Q}\|_{H^1} \\
&\qquad +C\|\nabla\hat{Q}\|_{L^\infty}(\|\nabla Q_1\|_{H^1}+\|\nabla Q_2\|_{H^1})
+C\|\hat{Q}\|_{W^{2,4}}(\|\nabla{Q}_1\|_{L^4}+\|\nabla{Q}_2\|_{L^4})\\
&\leq
C(R,\|\bu_0\|_{L^2},\|Q_0\|_{H^2})\Big(\|\hat{\bu}\|_{L^2}^{\frac{2k-3}{2k}}\|\hat{\bu}\|_{H^k}^{\frac{3}{2k}}
+\|\hat{\bu}\|_{L^2}^{\frac{k-1}{k}}\|\hat{\bu}\|_{H^k}^{\frac{1}{k}}
+\|\bu_2\|_{H^k}^{\frac1k}\|\nabla \hat{Q}\|_{L^2}^\frac12\|\nabla \hat{Q}\|_{H^2}^\frac12\\
&\qquad +\|\bu_2\|_{H^k}^{\frac{1}{2k}}\|\nabla \hat{Q}\|_{H^1}^\frac12\|\nabla \hat{Q}\|_{H^2}^\frac12
+\|\hat{\bu}\|_{L^2}^{\frac{k-2}{k}}\|\hat{\bu}\|_{H^k}^{\frac{2}{k}}
+\|\bu_2\|_{H^k}^{\frac{2}{k}}\|\hat{Q}\|_{H^2} \non\\
&\qquad + \|Q_1\|_{H^3}^\frac12\|\hat{\bu}\|_{L^2}^{\frac{k-1}{k}}\|\hat{\bu}\|_{H^k}^{\frac{1}{k}}
+\|\hat{Q}\|_{H^1}+\|\hat{Q}\|_{H^2}^\frac12\|\hat{Q}\|_{H^3}^\frac12\Big),
\end{align*}
which along with \eqref{eq:1.03a} and \eqref{contraction-estimate-4} further implies that
\begin{align}
&\|\mathcal{G}(\bu_1,Q_1)-\mathcal{G}(\bu_2,Q_2)\|_{L^2(0,T;H^1(\T))}\non\\
&\quad \leq
C(R,\|\bu_0\|_{L^2},\|Q_0\|_{H^2})\Big(T^{\frac{2k-3}{4k}}\|\hat{\bu}\|_{L^\infty(0,T;L^2(\T))}^\frac{2k-3}{2k}\|\hat{\bu}\|_{L^2(0,T;H^k(\T))}^\frac{3}{2k}
\non\\
&\qquad +T^{\frac{k-1}{2k}}\|\hat{\bu}\|_{L^\infty(0,T;L^2(\T))}^\frac{k-1}{k}\|\hat{\bu}\|_{L^2(0,T;H^k(\T))}^\frac{1}{k} \non\\
&\qquad +T^{\frac{k-2}{4k}}\|\bu_2\|_{L^2(0,T;H^k(\T))}^{\frac{1}{k}}\|\hat{Q}\|_{L^\infty(0,T;H^1(\T))}^\frac12\|\hat{Q}\|_{L^2(0,T;H^3(\T))}^\frac12\non\\
&\qquad
+ T^{\frac{k-1}{4k}}\|\bu_2\|_{L^2(0,T;H^k(\T))}^{\frac{1}{2k}}\|\hat{Q}\|_{L^\infty(0,T;H^2(\T))}^\frac12\|\hat{Q}\|_{L^2(0,T;H^3(\T))}^\frac12\non\\
&\qquad
+ T^{\frac{k-2}{2k}}\|\hat{\bu}\|_{L^\infty(0,T;L^2(\T))}^\frac{k-2}{k}\|\hat{\bu}\|_{L^2(0,T;H^k(\T))}^\frac{2}{k}\non\\
&\qquad + T^{\frac{k-2}{2k}}\|\bu_2\|_{L^2(0,T;H^k(\T))}^{\frac{2}{k}}\|\hat{Q}\|_{L^\infty(0,T;H^2(\T))}\non\\
&\qquad + T^{\frac{k-2}{4k}}\|Q_1\|_{L^2(0,T;H^3(\T))}^\frac12\|\hat{\bu}\|_{L^\infty(0,T;L^2(\T))}^{\frac{k-1}{k}}\|\hat{\bu}\|_{L^2(0,T;H^k(\T))}^{\frac{1}{k}}\non\\
&\qquad
+T^\frac12\|\hat{Q}\|_{L^\infty(0,T;H^1(\T))}
+T^\frac14\|\hat{Q}\|_{L^\infty(0,T;H^2(\T))}^\frac12\|\hat{Q}\|_{L^2(0,T;H^3(\T))}^\frac12\Big)\non\\
&\quad \leq C(R,\|\bu_0\|_{L^2},\|Q_0\|_{H^2})T^\frac{k-2}{4k}\big(1+ T^\frac{k+2}{4k}\big)\|\hat{G}\|_{L^2(0,T;H^1(\T))}.
\label{contraction-estimate-5}
\end{align}
Next, we note that
\begin{align*}
&2|L_4|\big\|\big((Q_1)_{ij,l}(Q_1)_{lk}\big)_{,k}-\big((Q_1)_{ij,l}(Q_0)_{lk}\big)_{,k}\non\\
&\qquad \quad -\big[\big((Q_2)_{ij,l}(Q_2)_{lk}\big)_{,k}-\big((Q_2)_{ij,l}(Q_0)_{lk}\big)_{,k}\big]\big\|_{L^2(0,T;H^1(\T))}\\
&\quad \leq 2|L_4|\big\|\big(\hat{Q}_{ij,l}((Q_1)_{lk}-(Q_0)_{lk})\big)_{,k}\big\|_{L^2(0,T;H^1(\T))}\non\\
&\qquad +2|L_4|\big\|\big((Q_2)_{ij,l}\hat{Q}_{lk}\big)_{,k}\big\|_{L^2(0,T;H^1(\T))}\non\\
&\quad :=J_1+J_2,
\end{align*}
with obvious notation. Recalling that $(Q_i-Q_0)|_{t=0}=0$ for $i=1,2$, then using \eqref{contraction-estimate-1} and a similar argument as for \eqref{key-estimate},
we obtain
\begin{align*}
J_1
&\leq C\|\hat{Q}\|_{L^2(0,T;H^3(\T))}\|Q_1-Q_0\|_{L^{\infty}(0,T;L^\infty(\T))}\non\\
&\quad +C \|\hat{Q}\|_{L^2(0,T;W^{2,4}(\T))} \|Q_1-Q_0\|_{L^{\infty}(0,T;W^{1,4}(\T))}\\
&\quad +C \|\nabla\hat{Q}\|_{L^2(0,T; L^\infty(\T))}\|\nabla (Q_1-Q_0)\|_{L^{\infty}(0,T; H^1(\T))}\\
&\leq C\|Q_1-Q_0\|_{L^{\infty}(0,T;H^2(\T))}^\frac14T^{\frac18}\|\hat{Q}\|_{L^2(0,T;H^3(\T))}\non\\
&\quad +C\|Q_1-Q_0\|_{L^\infty(0,T; H^2(\T))}\|\hat{Q}\|_{L^\infty(0,T;H^2(\T))}^\frac12\|\hat{Q}\|_{L^2(0,T;H^3(\T))}^\frac12T^{\frac14}\non\\
&\quad +C\|Q_1-Q_0\|_{L^\infty(0,T; H^2(\T))}\|\nabla\hat{Q}\|_{L^\infty(0,T; L^2(\T))}^\frac12 \|\nabla\hat{Q}\|_{L^2(0,T; H^2(\T))}^\frac12 T^{\frac14} \\
&\leq CT^{\frac18}(1+T^\frac18)(\|\hat{Q}\|_{L^2(0,T;H^3(\T))}+\|\hat{Q}\|_{L^\infty(0,T;H^2(\T))})\\
&\leq C(R, \|Q_0\|_{H^2})T^{\frac18}(1+T^\frac18)\|\hat{G}\|_{L^2(0,T;H^1(\T))},
\end{align*}
and
\begin{align*}
J_2
&\leq C\|Q_2\|_{L^2(0,T;H^3(\T))}\|\hat{Q}\|_{L^{\infty}(0,T;L^\infty(\T))}\non\\
&\quad +C \|Q_2\|_{L^2(0,T;W^{2,4}(\T))} \|\hat{Q}\|_{L^{\infty}(0,T;W^{1,4}(\T))}\\
&\quad +C \|\nabla Q_2\|_{L^2(0,T; L^\infty(\T))}\|\nabla \hat{Q}\|_{L^{\infty}(0,T; H^1(\T))}\\
&\leq C\|\hat{Q}\|_{L^{\infty}(0,T;H^2(\T))}^\frac34T^{\frac18}\|\hat{Q}\|_{H^1(0,T;L^2(\T))}^\frac14\|Q_2\|_{L^2(0,T;H^3(\T))}\non\\
&\quad +C\|\hat{Q}\|_{L^\infty(0,T; H^2(\T))}\|Q_2\|_{L^\infty(0,T;H^2(\T))}^\frac12\|Q_2\|_{L^2(0,T;H^3(\T))}^\frac12T^{\frac14}\non\\
&\quad +C\|\hat{Q}\|_{L^\infty(0,T; H^2(\T))}\|\nabla Q_2\|_{L^\infty(0,T; L^2(\T))}^\frac12 \|\nabla Q_2\|_{L^2(0,T; H^2(\T))}^\frac12 T^{\frac14} \\
&\leq CT^{\frac18}(1+T^\frac18)(\|\hat{Q}\|_{H^1(0,T;L^2(\T))}+\|\hat{Q}\|_{L^\infty(0,T;H^2(\T))})\\
&\leq C(R, \|Q_0\|_{H^2})T^{\frac18}(1+T^\frac18)\|\hat{G}\|_{L^2(0,T;H^1(\T))}.
\end{align*}
Then we have
\begin{align}
&2|L_4|\big\|\big((Q_1)_{ij,l}(Q_1)_{lk}\big)_{,k}-\big((Q_1)_{ij,l}(Q_0)_{lk}\big)_{,k}\non\\
&\qquad \quad -\big[\big((Q_2)_{ij,l}(Q_2)_{lk}\big)_{,k}-\big((Q_2)_{ij,l}(Q_0)_{lk}\big)_{,k}\big]\big\|_{L^2(0,T;H^1(\T))}\non\\
&\quad \leq C(R, \|Q_0\|_{H^2})T^{\frac18}(1+T^\frac18)\|\hat{G}\|_{L^2(0,T;H^1(\T))}.
\label{contraction-estimate-5aa}
\end{align}
As a consequence, we conclude from \eqref{def of F}, \eqref{contraction-estimate-5}
and \eqref{contraction-estimate-5aa} that there exists a sufficiently small time $T_0'>0$, it holds
\begin{equation*}
\|\mathcal{Y}(G_1)-\mathcal{Y}(G_2)\|_{L^2(0,T_0'; H^1(\T))}\leq \frac12\|G_1-G_2\|_{L^2(0,T_0';H^1(\T))}, \quad \forall\, G_1,\,G_2\in B_{T_0',R}.
\end{equation*}

In summary, we can take $T^*=\min\{T_0,\, T_0'\}$ and apply Banach's fixed point theorem to deduce that
the nonlinear mapping $\mathcal{Y}$ has a unique fixed point $G^*$ in $B_{T^*, R}$ such that $G^*=\mathcal{Y}(G^*)$.
By the definition of $\mathcal{Y}$, this implies that problem \eqref{NSEa}--\eqref{ICa} admits
a unique local solution $(\bu, Q)$ (corresponding to the fixed point $G^*$) satisfying
\begin{align}
   &\bu\in H^1(0,T^*;(H^k_{\sigma}(\T))')\cap C([0,T^*]; L^2_\sigma(\T))\cap L^2(0,T^*;H^k_{\sigma}(\T)),\label{eq:1.33}\\
   &Q\in H^1(0,T^*; H^1(\T))\cap C([0,T^*];H^2(\T))\cap L^2(0,T^*;H^3(\T)),\label{eq:1.33a}
\end{align}
and $Q\in \mathcal{S}_0^{(2)}$ a.e. in $\T\times(0,T)$.

\subsection{Global existence}
In what follows, we proceed to extend the local-in-time solution $(\bu, Q)$ of problem \eqref{NSEa}--\eqref{ICa}
that was constructed above to be a global one.
This goal can be achieved by deriving some uniform in time estimates.

First, it follows from Lemma \ref{prop on maximum principle} that
\begin{align}
 \|Q\|_{C([0, T^*]; L^\infty(\T))} \leq \sqrt{\eta_1},\label{Linfty}
\end{align}
where we recall that the constant $C_1$ in \eqref{eta1} is taken to be suitably small, but it only depends on $\T$.
The above $L^\infty$-estimate will play an important role in deriving global estimates for the solution $(\bu, Q)$.
Next, using an essentially identical argument for Proposition
\ref{proposition on energy law}, we observe that the solution $(\bu, Q)$ problem \eqref{NSEa}--\eqref{ICa} satisfies the following dissipative energy law:
\begin{align}
 & \frac 12\int_{\T} |\bu(t)|^2 dx+  \mathcal{E}(Q(t)) + \int_0^t \int_{\T}(\nu|\nabla \bu|^2+\delta|\mathcal{D}^k \bu|^2)dxds \non\\
 &\qquad +\int_0^t\int_{\T}\tr^2( \HH+\lambda\Id+\mu-\mu^{T})dxds\non\\
 &\quad  = \frac 12\int_{\T} |\bu_0|^2 dx+  \mathcal{E}(Q_0),\quad \forall\, t\in [0,T^*].
  \label{lemma-energy-law}
\end{align}
By the coercivity assumption \eqref{coercivity} and \cite[Lemma C1]{IXZ14}, we have
\begin{align}
\int_{\T} \left(L_1\partial_kQ_{ij}\partial_kQ_{ij}+L_2\partial_jQ_{ik}\partial_kQ_{ij}+L_3\partial_jQ_{ij}\partial_kQ_{ik}\right) dx
\geq \kappa \int_{\T}|\nabla Q|^2dx.\non
\end{align}
Then choose the constant $C_1$ in \eqref{eta1} to be small enough (again only depending on $\T$,
see also \cite[Section 3.2]{IXZ14}), we have
\begin{align}
 \mathcal{E}(Q(t))&\geq   \left(\kappa -|L_4|\|Q(t)\|_{L^\infty}\right) \int_{\T} |\nabla Q(t)|^2dx
 +\frac{c}{4}\int_{\T} \left[\left(\tr(Q^2)+\frac{a}{c}\right)^2-\frac{a^2}{c^2}\right]dx\non\\
 &\geq \frac{\kappa}{2}\int_{\T} |\nabla Q(t)|^2dx-\frac{a^2}{4c}|\T|,\quad \forall\, t\in [0,T^*],
 \label{below}
\end{align}
which implies that the free energy $\mathcal{E}(Q(t))$ is uniformly bounded from below.
From \eqref{Linfty}--\eqref{below}, we infer that
\begin{align}
 & \|\bu\|_{L^\infty(0,T^*;L^2(\T))\cap L^2(0,T^*;H^k(\T))}\leq C,\label{eq:1.07}\\
 & \|Q\|_{L^\infty(0,T^*;H^1(\T))}\leq C.\label{eq:1.06}
\end{align}
It follows from \eqref{lemma-energy-law} that $\HH+\lambda\Id+\mu-\mu^{T}\in L^2(0,T^*; L^2(\T))$.
Hence, using \eqref{Linfty}, for sufficiently small $C_1$, we can apply Lemma \ref{lemma1.0}  to conclude that
\begin{equation}\label{eq:1.06a}
  \|Q\|_{L^2(0,T^*;H^2(\T))}\leq C.
\end{equation}

Next, we derive necessary higher-order estimates for $Q$.
Multiplying \eqref{Q equb} by $\Delta^2 Q$, integrating over $\T$, after integration by parts and using the Cauchy--Schwarz inequality, we get
\begin{align}
&\frac12 \frac{d}{dt} \|\Delta Q\|_{L^2}^2+\zeta\|\nabla\Delta Q\|_{L^2}^2\non\\
&\quad = \int_{\T} \partial_m \left(\mathcal{G}_{ij}(\bu, Q) + 2L_4(Q_{ij,l}Q_{lk})_{,k}\right)\partial_m \Delta Q_{ij} dx \non\\
&\quad \leq \frac{\zeta}{4}\|\nabla\Delta Q\|_{L^2}^2 + C|L_4|\|Q\|_{L^\infty}\|Q\|_{H^3}^2 + C \|Q\|_{W^{2,4}}^2\|\nabla Q\|_{L^4}^2\non\\
&\qquad +C\|\nabla\mathcal{G}(\bu,Q)\|_{L^2}^2.\label{highh2}
\end{align}
Choosing $C_1$ to be small enough, we have
\begin{align}
C|L_4|\|Q\|_{L^\infty}\|Q\|_{H^3}^2&\leq \frac{\zeta}{12}(\|\nabla\Delta Q\|_{L^2}^2 +\|Q\|^2_{L^2}).\non
\end{align}
Besides, by \eqref{eq:1.27}, \eqref{eq:1.30} and the estimate \eqref{eq:1.06}, we see that
\begin{align}
& C \|Q\|_{W^{2,4}}^2\|\nabla Q\|_{L^4}^2\non\\
&\quad \leq C \|Q\|_{H^3}\|Q\|_{H^2}\|\Delta Q\|_{L^2}\|Q\|_{L^\infty}\non\\
&\quad \leq C\|\nabla\Delta Q\|_{L^2}\|\Delta Q\|_{L^2}^2\|Q\|_{L^\infty} + C \|\nabla\Delta Q\|_{L^2}\|\Delta Q\|_{L^2}\|Q\|_{L^2}\|Q\|_{L^\infty}\non\\
&\qquad +C\|\Delta Q\|_{L^2}^2\|Q\|_{L^2}\|Q\|_{L^\infty}+C\|\Delta Q\|_{L^2}\|Q\|_{L^2}^2\|Q\|_{L^\infty}\non\\
&\quad\leq \frac{\zeta}{12}\|\nabla\Delta Q\|_{L^2}^2 + C(1+\|\Delta Q\|_{L^2}^2)\|\Delta Q\|_{L^2}^2+ C.\non
\end{align}
Finally, it holds
\begin{align}
&\|\nabla \mathcal{G}(\bu,Q)\|_{L^2}^2\non\\
&\quad \leq C\|\nabla \bu\|_{L^4}^2\|\nabla Q\|_{L^4}^2+C\|\bu\|_{L^\infty}^2\|Q\|_{H^2}^2+C\|\bu\|_{H^2}^2\|Q\|_{L^\infty}^2\non\\
&\qquad +C(1+\|Q\|_{L^\infty}^2)^2\|Q\|_{H^1}^2+ C\|Q\|_{W^{2,4}}^2\|\nabla Q\|_{L^4}^2 \non\\
&\quad \leq C\|\bu\|_{H^2}^2 \|\Delta Q\|_{L^2}^2 + C\|\bu\|_{H^2}^2+\frac{\zeta}{12}\|\nabla\Delta Q\|_{L^2}^2
+ C(1+\|\Delta Q\|_{L^2}^2)\|\Delta Q\|_{L^2}^2+ C.\label{eq:1.34}
\end{align}
From the above estimates, we deduce from \eqref{highh2} that
\begin{align}
& \frac{d}{dt} \|\Delta Q\|_{L^2}^2+\zeta\|\nabla\Delta Q\|_{L^2}^2\non\\
&\quad \leq  C(1+\|\bu\|_{H^2}^2+\|\Delta Q\|_{L^2}^2)\|\Delta Q\|_{L^2}^2+ C(1+\|\bu\|_{H^2}^2).\non
\end{align}
This inequality together with Gronwall's lemma and the estimates \eqref{eq:1.07}--\eqref{eq:1.06a} yields
\begin{equation}\label{eq:1.08}
   \|Q\|_{L^2(0,T^*;H^3(\T))\cap L^\infty(0,T^*;H^2(\T))} \leq C.
 \end{equation}
At last, by comparison for time derivatives $\p_t\bu$ and $\p_t Q$, we infer from \eqref{eq:1.07}--\eqref{eq:1.06a}, \eqref{eq:1.34} and \eqref{eq:1.08} that
\begin{align}
 & \|\bu\|_{H^1(0,T^*;(H^k_\sigma(\T))')}\leq C,\quad  \|Q\|_{H^1(0,T^*;H^1(\T))}\leq C.
 \label{eq:1.09}
\end{align}

Since the bounds in all the above estimates \eqref{Linfty}, \eqref{eq:1.07}--\eqref{eq:1.06a}
and \eqref{eq:1.08}--\eqref{eq:1.09} only depend on $T>0$ and thus are independent of $T^*$, we are able to extend the (unique)
local solution $(\bu,Q)$ of problem \eqref{NSEa}--\eqref{ICa} to arbitrary time interval $[0,T]$, i.e., it is indeed a global solution.

The proof of Proposition \ref{approx1} is complete.

\section{Proof of the Main Result}
\setcounter{equation}{0}

In this section, we prove Theorem \ref{main-theorem} on the existence and uniqueness of global weak solutions to the
original problem \eqref{NSE}--\eqref{IC}.
\subsection{Global existence for initial data $(\bu_0,Q_0)\in L^2_\sigma(\T)\times H^2(\T)$}

Based on Proposition \ref{approx1}, we can pass to the limit as $\delta\rightarrow 0^+$ in the approximate problem \eqref{NSEa}--\eqref{ICa} to
show the existence of global weak solutions to problem \eqref{NSE}--\eqref{Q equ} with a slightly more regular initial data, i.e., $Q_0\in H^2(\T)$.

\begin{lemma}\label{prop on existence}
Suppose that the assumptions in Proposition \ref{approx1} are satisfied.
For any $(\bu_0,\, Q_0)\in L^2_\sigma(\T)\times H^2(\T)$ with
$\|Q_0\|_{L^\infty}\leq \sqrt{\eta_1}$, problem \eqref{NSE}--\eqref{Q equ} admits a global weak solution $(\bu, Q)$
satisfying
\begin{align*}
&\bu\in H^1(0,T; (H^{1}_\sigma(\T))')\cap C([0,T]; L^2_\sigma(\T))\cap L^2(0,T; H_\sigma^1(\T)),\\
&Q\in H^1(0,T; L^2(\T))\cap C([0,T]; H^1(\T)) \cap L^2(0,T; H^2(\T)).
\end{align*}
Besides, it holds
\begin{equation}\label{eq:1.20b}
 \|Q\|_{L^\infty(0, T;L^\infty(\T))} \leq \sqrt{\eta_1},
\end{equation}
and now the energy identity \eqref{lemma-energy-law} is satisfied with $\delta=0$.
\end{lemma}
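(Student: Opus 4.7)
The plan is to pass to the limit $\delta\to 0^+$ in the approximate problem \eqref{NSEa}--\eqref{ICa}, whose global well-posedness is guaranteed by Proposition \ref{approx1}, and then identify the limit as a weak solution of \eqref{NSE}--\eqref{Q equ}. For every $\delta\in(0,1)$, let $(\bu^\delta,Q^\delta)$ denote the unique global strong solution from Proposition \ref{approx1}. The starting point is the dissipative energy identity \eqref{eq:1.01}, combined with the lower bound \eqref{below} (which only uses the preserved $L^\infty$-estimate $\|Q^\delta\|_{L^\infty(0,T;L^\infty)}\le\sqrt{\eta_1}$ and the coercivity \eqref{coercivity}). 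These yield uniform-in-$\delta$ bounds
\begin{equation*}
\|\bu^\delta\|_{L^\infty(0,T;L^2)\cap L^2(0,T;H^1_\sigma)}+\sqrt{\delta}\,\|\bu^\delta\|_{L^2(0,T;H^k)}+\|Q^\delta\|_{L^\infty(0,T;H^1\cap L^\infty)}+\|\widetilde{\HH}^\delta\|_{L^2(0,T;L^2)}\le C,
\end{equation*}
where $\widetilde{\HH}^\delta=\HH^\delta+\lambda\Id+\mu-\mu^T$. Invoking Lemma \ref{lemma1.0} with $g=\widetilde{\HH}^\delta$ upgrades this to a uniform bound for $Q^\delta$ in $L^2(0,T;H^2)$.

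Next I would derive uniform bounds for the time derivatives. From the $Q$-equation \eqref{eq:1.19} all terms are already controlled in $L^2(0,T;L^2)$ using the estimates above together with standard two-dimensional interpolation ($\nabla Q^\delta\in L^4(L^4)$ by Ladyzhenskaya's inequality, $\bu^\delta\in L^4(L^4)$, and $Q^\delta\in L^\infty(L^\infty)$), so $\p_t Q^\delta$ is uniformly bounded in $L^2(0,T;L^2)$. For $\bu^\delta$, the equation \eqref{NSEa} implies a uniform bound of $\p_t\bu^\delta$ in $L^2(0,T;(H^k_\sigma)')$, because $\sigma^a(Q^\delta)$ and $\sigma^s(Q^\delta)$ lie in $L^2(0,T;L^2)$ (again using $Q^\delta\in L^\infty$ and $\nabla Q^\delta\in L^4(L^4)$, plus $\widetilde{\HH}^\delta\in L^2(L^2)$ to control $\sigma^a$), while the contribution of the regularizing term pairs with $v\in H^k_\sigma$ through $\delta\int\mathcal{D}^k\bu^\delta\cdot\mathcal{D}^k v$ and is at most $\sqrt{\delta}\,(\sqrt{\delta}\|\bu^\delta\|_{H^k})\|v\|_{H^k}$. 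Combining these bounds with Aubin--Lions, I extract a (non-relabelled) subsequence such that $\bu^\delta\rightharpoonup \bu$ weakly in $L^2(0,T;H^1_\sigma)$ and weakly-$*$ in $L^\infty(0,T;L^2)$, strongly in $L^2(0,T;L^2_\sigma)$; and $Q^\delta\rightharpoonup Q$ weakly in $L^2(0,T;H^2)\cap H^1(0,T;L^2)$, weakly-$*$ in $L^\infty(0,T;H^1\cap L^\infty)$, and strongly in $L^2(0,T;H^1)\cap C([0,T];L^p)$ for every finite $p$.

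The main obstacle will be passing to the limit in the strongly nonlinear source and stress terms. For the convective terms $\bu^\delta\otimes\bu^\delta$ and $\bu^\delta\cdot\nabla Q^\delta$, the strong $L^2(L^2)$ convergence of $\bu^\delta$ combined with the weak convergence of $\nabla\bu^\delta$ and $\nabla Q^\delta$ is enough. The quadratic gradient terms $\nabla Q^\delta\otimes\nabla Q^\delta$ (inside $\sigma^s$ and on the right of \eqref{eq:1.19}) require the strong convergence of $\nabla Q^\delta$ in $L^2(0,T;L^2)$, which is exactly what Aubin--Lions delivers; combined with the uniform $L^4(L^4)$ bound on $\nabla Q^\delta$ and Vitali's theorem, these terms converge in $L^2(0,T;L^{4/3})$, enough for testing against $\mathbf{v}\in L^2(0,T;H^1_\sigma)$ and $\Phi\in L^2(0,T;H^1)$. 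The regularizing term drops out: for any test $\mathbf{v}\in L^2(0,T;H^k_\sigma)$ one has
\begin{equation*}
\Big|\delta\int_0^T\!\!\int_\T\mathcal{D}^k\bu^\delta\cdot\mathcal{D}^k\mathbf{v}\,dx\,dt\Big|\le \sqrt{\delta}\,\big(\sqrt{\delta}\|\bu^\delta\|_{L^2(H^k)}\big)\|\mathbf{v}\|_{L^2(H^k)}\to 0,
\end{equation*}
so the weak formulation holds for such $\mathbf{v}$, and then for all $\mathbf{v}\in L^2(0,T;H^1_\sigma)$ by density.

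Finally, I would verify the three remaining properties. The $L^\infty$-bound \eqref{eq:1.20b} follows because $\|Q\|_{L^\infty_{x,t}}\le\liminf_{\delta\to 0}\|Q^\delta\|_{L^\infty_{x,t}}\le\sqrt{\eta_1}$ by weak-$*$ lower semicontinuity. The initial datum $Q(\cdot,0)=Q_0$, $\bu(\cdot,0)=\bu_0$ are inherited from the continuity-in-time of the limit (granted by \eqref{Gel} applied to the weak/strong dual pairs above). For the energy identity, the limit $(\bu,Q)$ has exactly the regularity \eqref{eq:1.26}--\eqref{eq:1.26a}, so Proposition \ref{proposition on energy law} applies directly and yields \eqref{lemma-energy-law} with $\delta=0$; equivalently, passing to the liminf in \eqref{eq:1.01} gives one inequality, and applying Proposition \ref{proposition on energy law} to the limit gives the reverse, hence equality. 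This completes the proof.
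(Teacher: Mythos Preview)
Your proposal is correct and follows essentially the same compactness argument as the paper: derive $\delta$-independent bounds from the energy identity \eqref{eq:1.01} together with the preserved $L^\infty$-estimate and Lemma \ref{lemma1.0}, extract a convergent subsequence via Aubin--Lions, pass to the limit in the nonlinear terms (using strong convergence of $\nabla Q^\delta$ and $\bu^\delta$ in $L^2(L^2)$ and uniform $L^4(L^4)$ bounds), drop the regularizing term, and then recover the $(H^1_\sigma)'$ time regularity of $\bu$, the $L^\infty$-bound \eqref{eq:1.20b}, and the energy identity via Proposition \ref{proposition on energy law}. The only cosmetic differences are that the paper obtains slightly stronger strong convergence ($Q^\delta\to Q$ in $L^2(0,T;H^{2-\epsilon})$) and treats the cubic $L_4$-term in $\sigma^s$ by pointwise a.e.\ convergence plus the uniform bounds rather than by Vitali, but these lead to the same conclusion.
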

\begin{proof}
For the given initial data $(\bu_0, Q_0)$ and an arbitrary fixed $\delta>0$,
according to Proposition \ref{approx1}, problem \eqref{NSEa}--\eqref{ICa} admits
a unique global solution denoted by $(\bu^\delta,\,Q^\delta)$ that satisfies \eqref{rregu}--\eqref{rregQ} and
\begin{equation}\label{eq:1.24}
\|Q^\delta\|_{L^\infty(0,T;L^\infty(\T))}\leq \sqrt{\eta_1}.
\end{equation}
Moreover, we have
\begin{align}
&\int_0^T\langle \p_t \bu^\delta,\mathbf{v}\rangle_{(H^{k}_\sigma)',H^k_\sigma}dt
 -\int_0^T\int_{\T}  \bu^\delta\otimes \bu^\delta : \nabla \mathbf{v} dxdt\non\\
 &\qquad
   +\nu \int_0^T\int_{\T} \nabla{\bu^\delta}:\nabla{\mathbf{v}}dxdt
   +\delta \int_0^T\int_{\T} \mathcal{D}^k{\bu^\delta}\cdot \mathcal{D}^k{\mathbf{v}}dxdt
  \nonumber\\
&\quad = -\int_0^T\int_{\T} \big(\sigma^a(Q^\delta)+\sigma^s(Q^\delta)\big) : \nabla \mathbf{v} dxdt,
\label{eq:1.22}
\end{align}
for any $\mathbf{v}\in L^2(0,T;H^k_{\sigma}(\T))$, and
\begin{align}
&\p_t Q^\delta_{ij}+u^\delta_k {Q}^\delta_{ij,k}+Q^\delta_{ik}\omega^\delta_{kj}-\omega_{ik}^\delta Q^\delta_{kj}\non\\
&\quad =\zeta\Delta{Q^\delta_{ij}}  +2L_4 (  Q^\delta_{ij,l}Q^\delta_{l k})_{,k} -L_4 Q^\delta_{kl,i} Q^\delta_{kl,j} +
\frac{L_4}{2}|\nabla{Q^\delta}|^2 \delta_{ij}-aQ^\delta_{ij}-c \tr((Q^\delta)^2)Q^\delta_{ij}
\label{eq:1.18}
  \end{align}
for a.e. $(x,t)\in\T\times(0, T)$.

Since the approximate solution $(\bu^\delta,\,Q^\delta)$ satisfies the energy identity \eqref{lemma-energy-law}, then by
Lemma \ref{lemma1.0} and a similar argument in Section 4.2, we obtain the following estimates
\begin{align*}
  & \|\bu^\delta\|_{L^\infty(0,T; L^2(\T))\cap L^2(0,T;H^1(\T))}+ \sqrt{\delta} \|\mathcal{D}^k \bu^{\delta }\|_{L^2(0,T;L^2(\T))}\leq C,\\
  & \|Q^{\delta}\|_{L^\infty(0,T;H^1(\T))\cap L^2(0,T;H^2(\T))}\leq C,
\end{align*}
where the constant $C$ depends on $\|\bu_0\|_{L^2}$, $\|Q_0\|_{H^1}$, $\eta_1$, $\nu$, $\T$, $L_i$, $a$, $c$,
but it is independent of the parameter $\delta$.
The above estimates and the Sobolev embedding theorem yield that
\begin{equation}
\|\bu^\delta\|_{L^4(0,T;L^4(\T))}+\|\nabla Q^\delta\|_{L^4(0,T;L^4(\T))} \leq C,\label{nqL4L4}
\end{equation}
which together with \eqref{eq:1.24} implies
\begin{align*}
&\|\sigma^a(Q^\delta)\|_{L^2(0,T;L^2(\T))}+\|\sigma^s(Q^\delta)\|_{L^2(0,T;L^2(\T))}\leq C,
\end{align*}
where $C$ is again independent of $\delta$. Then by comparison, we have
\begin{align*}
\|\p_t \bu^\delta \|_{L^2(0,T; (H^k(\T))')}+\|\p_t Q\|_{L^2(0,T;L^2(\T))}\leq C.
\end{align*}

These uniform bounds imply that, up to the extraction of a subsequence, the following convergence results as $\delta\to 0^+$:
\begin{align*}
\bu^\delta \rightarrow \bu \quad &\text{weakly star in}~ L^\infty(0,T;L^2(\T))\cap L^2(0,T;H^1(\T)),\\
\p_t \bu^\delta \rightarrow \p_t \bu \quad &\text{weakly in}~ L^2(0,T;(H^{k}_\sigma(\T))'),\\
Q^\delta \rightarrow Q \quad &\text{weakly star in}~ L^\infty(0,T;H^1(\T)\cap L^\infty(\T))\cap L^2(0,T;H^2(\T)),\\
\p_t Q^\delta \rightarrow \p_t Q \quad &\text{weakly in}~ L^2(0,T;L^2(\T)),\\
\sqrt{\delta}\left(\sqrt{\delta} \mathcal{D}^k \bu^\delta\right) \rightarrow 0\quad &\text{strongly in}~ L^2(0,T;L^2(\T)).
\end{align*}
Besides, using the well-known Aubin--Lions compactness lemma (see e.g., \cite{S87}), we obtain the following
strong convergence results (up to a subsequence)
\begin{align}
  \bu^\delta &\rightarrow \bu~\text{strongly in}~ L^2(0,T;H_\sigma^{1-\epsilon}(\T)),\label{eq:1.23}\\
  Q^\delta &\rightarrow Q~\text{strongly in}~ L^2(0,T;H^{2-\epsilon}(\T))\cap L^4(0,T; L^4(\T)),
  \label{eq:1.23a}
\end{align}
for any $\epsilon\in (0,\frac12)$.
Concerning the convergence of nonlinear terms, we only
need to treat the cubic term associated with $L_4$ in $\sigma^s$ (see \eqref{tensors}).
It follows from \eqref{eq:1.23a} that $Q$ and $\nabla Q$ converge almost everywhere in $\T\times(0,T)$ (up to a subsequence).
Then we infer from \eqref{eq:1.24} and \eqref{nqL4L4} that
\begin{equation}
  \lim_{\delta\to 0^+}\int_0^T\int_{\T} (Q^\delta)_{jm}(Q^\delta)_{kl,m}(Q^\delta)_{kl,i} M_{ij} dx dt
  =\int_0^T\int_{\T} Q_{jm}Q_{kl,m}Q_{kl,i} M_{ij} dx dt,\non
\end{equation}
for any $M\in L^2(0,T; L^2(\T))$. The convergence of other nonlinear terms can be treated by
using a similar argument like in \cite[Section 3]{ADL14} and the details are omitted here.

In conclusion, we are able to pass to the limit $\delta\to 0^+$ in \eqref{eq:1.18} and deduce that the limit function
$Q$ satisfies \eqref{eq:1.19} almost everywhere in $\T\times(0, T)$.
Moreover, passing to the limit in \eqref{eq:1.22}, we get
\begin{align*}
 &\int_0^T\langle \p_t \bu, \mathbf{v}\rangle_{(H^{k}_\sigma)',H^k_\sigma} dt
 - \int_0^T\int_{\T} \bu\otimes \bu:\nabla \mathbf{v}dxdt +\nu\int_0^T\int_{\T}  \nabla{\bu}:
\nabla{\mathbf{v}} dxdt\non\\
&\quad = -\int_0^T\int_{\T} (\sigma^a(Q)+\sigma^s(Q)):\nabla \mathbf{v}dxdt,
\end{align*}
for any $\mathbf{v}\in L^2(0,T; H^k_{\sigma}(\T))$.
By comparison, the above identity also implies that the time derivative of $\bu$ fulfills
$\p_t \bu \in L^2(0,T; (H_\sigma^{1}(\T))')$
and the weak formulation \eqref{eq:1.21} follows.
Besides, we infer from a suitable interpolation inequality (recall \eqref{Gel}) that
$\bu \in C([0,T]; L^2_\sigma(\T))$ and $Q\in C([0,T]; H^1(\T))$.

Finally, we easily deduce from \eqref{Linfty} and the lower
semicontinuity property of the $L^\infty$-norm (see e.g., \cite[Proposition 3.13]{brezis2010functional}) that
the limit function $Q$ satisfies the uniform estimate \eqref{eq:1.20b} associated with its $L^\infty$-norm.
Furthermore, thanks to Proposition \ref{proposition on energy law},
the energy identity \eqref{lemma-energy-law} (now with $\delta=0$) is satisfied.
\end{proof}

\subsection{Continuous dependence with respect to initial data}

Next, we aim to show the uniqueness of global weak solutions.
For this purpose, we prove a continuous dependence result with respect to
the initial data in a suitable topology.

A conventional approach is to estimate the difference between two solutions by
energy estimates that are usually performed at the natural energy space level,
namely, $(\bu, Q)\in L^\infty(0,T;L^2_\sigma(\T))\times L^\infty(0,T;H^1(\T))$ for our
current problem \eqref{NSE}--\eqref{IC}.
However, due to the highly nonlinear stress tensors $\sigma^a,\, \sigma^s$,
this goal cannot be achieved in such settings.
This was also illustrated in \cite{PZ12} where only a weak-strong uniqueness result was obtained in
the simpler isotropic case $L_2=L_3=L_4=0$.

Inspired by \cite{LLT13, LTX16}, we shall perform the energy
estimates at a lower-order energy space than that appeared in the
basic energy law \eqref{eq:1.25}. Roughly speaking, we use the $(H^1)'$
energy estimate for the velocity $\bu$ and $L^2$ energy estimate for
the order parameter $Q$, respectively. As we shall see below,
one advantage in such settings is that certain higher-order a
priori estimates (compared with $(H^1)'\times L^2$) are
automatically provided by the basic energy law.

\begin{lemma}\label{prop on continuous dependence}
Let $(\bu_i, Q_i)$, $i=1,2$, be two global weak solutions to
problem \eqref{NSE}--\eqref{IC} with initial data $(\bu_{0i}, Q_{0i})\in L^2_\sigma(\T)\times (H^1(\T)\cap L^\infty(\T))$. Let
\begin{align}
\eta_2=\min\left\{\frac{\sqrt{\nu}}{16C^*}\frac{\zeta}{|L_4|},\ \frac{1}{64}\left(\frac{\zeta}{L_4}\right)^2\right\},\label{eta2}
\end{align}
where $C^*$ is the constant in the elliptic estimate \eqref{H2} depending only on $\T$.
If
\begin{align}
\|Q_i\|_{L^\infty(0,T; L^\infty(\T))}\leq \sqrt{\eta_2},\label{smallness}
\end{align}
then we have
\begin{align}
&\|\bar{\mathbf{w}}(t)\|_{H^1}^2+\|\bar{Q}(t)\|_{L^2}^2+\int_0^t\big(\|\Delta \bar{\mathbf{w}}(s)\|_{L^2}^2+\|\nabla \bar{Q}(s)\|_{L^2}^2\big)ds\non\\
&\quad \leq Ce^{Ct}\big(\|\bar{\mathbf{w}}_0\|_{H^1}+\|\bar{Q}_0\|_{L^2} \big),\quad \forall\, t\in (0, T).
\label{inequality-continuous-dependence}
\end{align}
Here,
\begin{align*}
&\bar{\mathbf{w}}\defeq(-\Delta+I)^{-1}(\bu_1-\bu_2),\quad\quad \, \bar{Q}\defeq Q_1-Q_2,\\
&\bar{\mathbf{w}}_0\defeq(-\Delta+I)^{-1}(\bu_{01}-\bu_{02}),\quad \bar{Q}_0\defeq Q_{01}-Q_{02},
\end{align*}
$I$ stands for the identity operator and $C>0$ is a constant that depends on
$\|\bu_{0i}\|_{L^2}$, $\|Q_{0i}\|_{H^1}$ for $1\leq i\leq 2$, $\eta_2$ and coefficients of the system.
\end{lemma}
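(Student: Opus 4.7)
The plan is to form the difference equations for $\bar\bu=\bu_1-\bu_2$ and $\bar Q=Q_1-Q_2$, and then test them at a level one order below the natural energy space: test the momentum-difference equation against $\bar{\mathbf{w}}=(-\Delta+I)^{-1}\bar\bu$ and test the $Q$-difference equation against $\bar Q$. The first pairing produces the clean identity
\begin{equation*}
\langle \partial_t\bar\bu,\bar{\mathbf{w}}\rangle=\tfrac12\tfrac{d}{dt}\|\bar{\mathbf{w}}\|_{H^1}^2,
\qquad
\int_{\T}(-\nu\Delta\bar\bu)\cdot\bar{\mathbf{w}}\,dx=\nu\bigl(\|\Delta\bar{\mathbf{w}}\|_{L^2}^2+\|\nabla\bar{\mathbf{w}}\|_{L^2}^2\bigr),
\end{equation*}
using $\bar\bu=(-\Delta+I)\bar{\mathbf{w}}$; the second pairing yields $\tfrac12\tfrac{d}{dt}\|\bar Q\|_{L^2}^2+\zeta\|\nabla\bar Q\|_{L^2}^2$ after integration by parts on the $\zeta\Delta Q$ term. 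Adding the two identities produces the left-hand side of \eqref{inequality-continuous-dependence}, and the key point is that the dissipation controls both $\|\Delta\bar{\mathbf{w}}\|_{L^2}$ and, through the identity $\|\bar\bu\|_{L^2}^2=\|\Delta\bar{\mathbf{w}}\|_{L^2}^2+2\|\nabla\bar{\mathbf{w}}\|_{L^2}^2+\|\bar{\mathbf{w}}\|_{L^2}^2$, the full $L^2$-norm of $\bar\bu$ (this is why the statement puts $\|\Delta\bar{\mathbf{w}}\|_{L^2}^2$ on the left-hand side).

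Next I would estimate every cross term on the right. The terms separate into: the transport terms $\bu_1\cdot\nabla\bar\bu$, $\bar\bu\cdot\nabla\bu_2$, $\bu_1\cdot\nabla\bar Q$, $\bar\bu\cdot\nabla Q_2$; the rotation terms $\bar Q\omega_1-\omega_1\bar Q$ and $Q_2\bar\omega-\bar\omega Q_2$; the difference $\widetilde{\HH}_1-\widetilde{\HH}_2$ (including the troublesome $L_4$ pieces $2L_4(Q_{ij,l}Q_{lk})_{,k}$, $-L_4 Q_{kl,i}Q_{kl,j}$ and $\tfrac{L_4}{2}|\nabla Q|^2\delta_{ij}$); and the stress differences $\bar\sigma^a,\bar\sigma^s$. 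For the skew term $(\bar Q\omega_1-\omega_1\bar Q)\!:\!\bar Q$ one uses that $\omega_1$ is antisymmetric and $\bar Q^2$ is symmetric to obtain an immediate cancellation. The crucial hidden cancellation is between $(Q_2\bar\omega-\bar\omega Q_2)\!:\!\bar Q$ in the $Q$-equation test and the ``$\bar\sigma^a$ split into a piece with $Q_2\bar{\widetilde{\HH}}-\bar{\widetilde{\HH}}Q_2$'' in the velocity test; here I will apply Lemma \ref{cancel} together with the rewriting $\bar\bu=(-\Delta+I)\bar{\mathbf{w}}$ so that only terms involving $\nabla\bar{\mathbf{w}}$ (not higher derivatives) survive, which can then be absorbed into the dissipation. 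The remaining piece of $\bar\sigma^a$, containing $\bar Q\widetilde{\HH}_1-\widetilde{\HH}_1\bar Q$, is estimated by Hölder, the Sobolev embedding $H^1\hookrightarrow L^p$ for any $p<\infty$ in two dimensions, and the $L^2(0,T;H^2)$-regularity of $Q_1$ coming from Lemma \ref{lemma1.0} and the basic energy law \eqref{eq:1.25}.

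For the symmetric stress difference $\bar\sigma^s$ and the $L_4$-contributions in $\widetilde{\HH}_1-\widetilde{\HH}_2$, I would group terms in two buckets. Quadratic-gradient contributions (from $L_1,L_2,L_3$) are controlled by interpolation inequalities \eqref{eq:1.27}--\eqref{eq:1.30} together with the bounds $\|Q_i\|_{L^\infty(0,T;H^1)}$ and $\|Q_i\|_{L^2(0,T;H^2)}$ from the basic energy law. The genuinely new difficulty is the cubic $L_4$-terms such as $Q_{jm}Q_{kl,m}Q_{kl,i}$ and $(Q_{ij,l}Q_{lk})_{,k}$, whose differences produce contributions of the schematic form $\int Q_i\,\nabla\bar Q\,\Delta\bar Q\,dx$ and $\int\bar Q\,\nabla Q_i\,\nabla\bar Q\,dx$. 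Here the smallness assumption $\|Q_i\|_{L^\infty}\le\sqrt{\eta_2}$ is essential: using the elliptic bound $\|\bar Q\|_{H^2}\le C^*(\|\Delta\bar Q\|_{L^2}+\|\bar Q\|_{L^2})$ together with the prescribed choice of $\eta_2$ in \eqref{eta2}, the worst bad term is bounded by $\tfrac{\zeta}{4}\|\nabla\bar Q\|_{L^2}^2+\tfrac{\nu}{2}\|\Delta\bar{\mathbf{w}}\|_{L^2}^2$ plus an integrable time factor times $\|\bar{\mathbf{w}}\|_{H^1}^2+\|\bar Q\|_{L^2}^2$. The transport cross term $\int\bar\bu\cdot\nabla Q_2\,\bar Q\,dx$ is treated similarly by writing $\bar\bu=(-\Delta+I)\bar{\mathbf{w}}$ and invoking the Ladyzhenskaya/Agmon-type estimates plus $Q_2\in L^2(0,T;H^2)$.

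The main obstacle is precisely the cubic $L_4$-mechanism in the last paragraph: one must carefully track every term in $\bar\sigma^a+\bar\sigma^s$ and in $\widetilde{\HH}_1-\widetilde{\HH}_2$, exploit the Lemma \ref{cancel} cancellation between rotation and antisymmetric stress, and show that after absorbing the small-$L^\infty$ terms one is left with a Gronwall inequality of the form $\tfrac{d}{dt}\mathcal{F}(t)\le \phi(t)\mathcal{F}(t)$ where $\mathcal{F}(t)=\|\bar{\mathbf{w}}(t)\|_{H^1}^2+\|\bar Q(t)\|_{L^2}^2$ and $\phi\in L^1(0,T)$ depends only on the basic-energy-law norms of $(\bu_i,Q_i)$. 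Applying Gronwall's lemma then yields \eqref{inequality-continuous-dependence} and, in particular, uniqueness whenever $\bar{\mathbf{w}}_0=0$ and $\bar Q_0=0$.
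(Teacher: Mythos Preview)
Your overall strategy---testing the velocity difference against $\bar{\mathbf{w}}=(-\Delta+I)^{-1}\bar\bu$ and the $Q$-difference against $\bar Q$, then exploiting the cancellation between the rotation term $(Q_2\bar\omega-\bar\omega Q_2)\!:\!\bar Q$ and the $Q_2\bar{\widetilde\HH}-\bar{\widetilde\HH}Q_2$ piece of $\bar\sigma^a$ via Lemma~\ref{cancel}---is exactly the paper's approach. However, two of your technical claims misidentify where the real difficulties lie.

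First, the dangerous uncontrolled quantity in the $\bar\sigma^a$ test is not a higher derivative of $\bar{\mathbf{w}}$ but rather $\Delta\bar Q$, coming from the leading piece $\zeta\Delta\bar Q$ of $\bar{\widetilde\HH}$; one obtains $\zeta\int_{\T}\nabla\bar{\mathbf{w}}\!:\!(\Delta\bar Q\,Q_2-Q_2\,\Delta\bar Q)\,dx$. Lemma~\ref{cancel} alone does not dispose of this: after rewriting as $-\zeta\int_{\T}(Q_2\bar\xi-\bar\xi Q_2)\!:\!\Delta\bar Q\,dx$ one must integrate the Laplacian by parts \emph{onto} $Q_2\bar\xi-\bar\xi Q_2$, producing a term with $\Delta\bar\xi$ (third derivatives of $\bar{\mathbf{w}}$, still uncontrolled). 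It is only after writing $\bar\omega=(I-\Delta)\bar\xi$ in the $Q$-equation's rotation term---and testing the $Q$-equation with $2\zeta\bar Q$, not $\bar Q$, so the coefficients match---that the two $\Delta\bar\xi$ contributions cancel exactly.

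Second, your use of the elliptic bound $\|\bar Q\|_{H^2}\le C^*(\|\Delta\bar Q\|_{L^2}+\|\bar Q\|_{L^2})$ is misplaced: at this level of estimate there is no $H^2$ control on $\bar Q$, only $\|\nabla\bar Q\|_{L^2}$. The constant $C^*$ from \eqref{H2} actually enters through $\|\bar{\mathbf{w}}\|_{H^2}\le C^*(\|\Delta\bar{\mathbf{w}}\|_{L^2}+\|\bar{\mathbf{w}}\|_{L^2})$, applied to the $I_{3e}$-term $2|L_4|\|Q_1\|_{L^\infty}\|Q_2\|_{L^\infty}\|\bar{\mathbf{w}}\|_{H^2}\|\nabla\bar Q\|_{L^2}$ arising (after integrating by parts) from the $L_4$ part of $\bar{\widetilde\HH}$ inside $\bar\sigma^a$; this is where the first bound in \eqref{eta2} is used. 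The second bound in \eqref{eta2} is used for the $Q$-equation term $4\zeta|L_4|\|Q_2\|_{L^\infty}\|\nabla\bar Q\|_{L^2}^2$ (schematically $\int Q_2\,\nabla\bar Q\,\nabla\bar Q$, not $\int Q_i\,\nabla\bar Q\,\Delta\bar Q$), which must be absorbed into the dissipation $2\zeta^2\|\nabla\bar Q\|_{L^2}^2$.
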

\begin{proof}
Let
\begin{align*}
&\bar{\bu}\defeq \bu_1-\bu_2, \quad \bar{P}\defeq P_1-P_2,\quad \bar{\sigma}^a\defeq \sigma^a(Q_1)-\sigma^a(Q_2),
\quad\bar{\sigma}^s\defeq \sigma^s(Q_1)-\sigma^s(Q_2),\\
&\bar\HH\defeq (\HH(Q_1)+\lambda_1 \mathbb{I}+\mu_1-\mu_1^T) -(\HH(Q_2)+\lambda_2 \mathbb{I}+\mu_2-\mu_2^T).
\end{align*}
First, we infer from the incompressibility condition \eqref{incompressibility} that
\begin{equation}\label{incompressibility-2}
\nabla\cdot\bar{\mathbf{w}}=\nabla\cdot[(-\Delta+I)^{-1}\bar{\bu}]=(-\Delta+I)^{-1}(\nabla\cdot\bar{\bu})=0.
\end{equation}
Besides, we see that $\bar{\mathbf{w}}$ satisfies the following equation
\begin{align}
\partial_t\bar{\mathbf{w}}&=(-\Delta+I)^{-1}\partial_t\bar{\bu}\non\\
&=(-\Delta+I)^{-1}\big[\bu_2\cdot\nabla{\bu}_2-\bu_1\cdot\nabla{\bu}_1+\nu \Delta\bar{\bu}-\nabla\bar{P}
+\nabla\cdot(\bar{\sigma}^a+\bar{\sigma}^s)\big].
\label{xi-equ}
\end{align}
Multiplying equation \eqref{xi-equ} with $\bar{\mathbf{w}}-\Delta\bar{\mathbf{w}}$, integrating over
$\T$, using \eqref{incompressibility-2}, we obtain  after integration by parts that
\begin{align}
&\frac12\frac{d}{dt} \int_{\T}(|\bar{\mathbf{w}}|^2+|\nabla\bar{\mathbf{w}}|^2)dx\non\\
&\quad =\int_{\T} [(I-\Delta)\bar{\mathbf{w}}] \cdot (-\Delta +I)^{-1} \big[\bu_2\cdot\nabla{\bu}_2 -\bu_1\cdot\nabla{\bu}_1+\nu\Delta\bar{\bu}-\nabla\bar{P} +\nabla\cdot(\bar{\sigma}^a+\bar{\sigma}^s)\big]\,dx\non\\
&\quad =-\left\langle\bar{\mathbf{w}},\nabla\cdot(\bu_1\otimes \bu_1-\bu_2\otimes \bu_2)-\nu \Delta\bar{\bu}+\nabla\bar{P}-\nabla\cdot(\bar{\sigma}^a+\bar{\sigma}^s)\right\rangle_{H^1, (H^1)'}\non\\
&\quad =\nu \langle\bar{\mathbf{w}},\Delta\bar{\bu}\rangle_{H^1, (H^1)'}
+\int_{\T}\nabla\bar{\mathbf{w}} : (\bu_1\otimes \bu_1-\bu_2\otimes \bu_2-\bar{\sigma}^a-\bar{\sigma}^s)\,dx\non\\
&\quad =-\nu \int_{\T}(|\nabla\bar{\mathbf{w}}|^2+|\Delta\bar{\mathbf{w}}|^2)dx
+\underbrace{\int_{\T}\nabla\bar{\mathbf{w}}:(\bar{\bu}\otimes\bu_1+\bu_2\otimes\bar{\bu})\,dx}_{I_1}\non\\
&\qquad
-\underbrace{\int_{\T}\nabla\bar{\mathbf{w}}:\bar\sigma^s\,dx}_{I_2}
-\underbrace{\int_{\T}\nabla\bar{\mathbf{w}}:\bar\sigma^a\,dx}_{I_3}.
\label{xi-energy-inequlity}
\end{align}
Using the estimates for global weak solutions
\begin{align}
&\|\bu_i\|_{L^\infty(0,T;L^2(\T))\cap L^2(0,T;H^1(\T))}\leq C, \quad \|Q_i\|_{L^\infty(0,T; H^1(\T))\cap L^2(0,T;H^2(\T))}\leq C, \quad 1\leq i\leq 2
\label{integrability}
\end{align}
and the $L^\infty$-estimate \eqref{smallness}, we proceed to estimate the terms $I_1$ to $I_3$.
By the H\"older and Young inequalities, we have
\begin{align*}
I_1
&\leq\int_{\T}|\nabla\bar{\mathbf{w}}|(|\bu_1|+|\bu_2|)(|\bar{\mathbf{w}}|+|\Delta\bar{\mathbf{w}}|)\,dx\non\\
&\leq C(\|\bu_1\|_{L^4}+\|\bu_2\|_{L^4}) \big(\|\nabla \bar{\mathbf{w}}\|_{L^4}\|\Delta \bar{\mathbf{w}}\|_{L^2}+\|\nabla \bar{\mathbf{w}}\|_{L^2}\|\bar{\mathbf{w}}\|_{L^4})\non\\
&\leq\frac{\nu}{32}\|\Delta\bar{\mathbf{w}}\|_{L^2}^2
+C(\|\bu_1\|_{L^4}^2+\|\bu_2\|_{L^4}^2)\|\nabla\bar{\mathbf{w}}\|_{L^4}^2+C(\|\bu_1\|_{L^4}+\|\bu_2\|_{L^4})\|\bar{\mathbf{w}}\|_{H^1}^2 \non\\
&\leq\frac{\nu}{32}\|\Delta\bar{\mathbf{w}}\|_{L^2}^2+C(\|\bu_1\|_{L^4}+\|\bu_2\|_{L^4})\|\bar{\mathbf{w}}\|_{H^1}^2\non\\
&\quad +C(\|\bu_1\|_{L^2}\|{\bu}_1\|_{H^1}+\|\bu_2\|_{L^2}\|{\bu}_2\|_{H^1})\|\nabla\bar{\mathbf{w}}\|_{L^2}(\|\Delta\bar{\mathbf{w}}\|_{L^2}+\|\bar{\mathbf{w}}\|_{L^2})\non\\
&\leq\frac{\nu}{16}\|\Delta\bar{\mathbf{w}}\|_{L^2}^2
+C(1+\|{\bu}_1\|_{H^1}^2+\|{\bu}_2\|_{H^1}^2)(\|\bar{\mathbf{w}}\|_{L^2}^2+\|\nabla\bar{\mathbf{w}}\|_{L^2}^2).
\end{align*}
Next, for $I_2$, it holds
\begin{align*}
I_2
&\leq 2(|L_1|+|L_2|+|L_3|)\|\nabla\bar{\mathbf{w}}\|_{L^4}\|\nabla\bar{Q}\|_{L^2}(\|\nabla{Q}_1\|_{L^4}+\|\nabla{Q}_2\|_{L^4})\non\\
&\quad+2|L_4|\|\nabla\bar{\mathbf{w}}\|_{L^4}\|Q_2\|_{L^\infty}\|\nabla\bar{Q}\|_{L^2}(\|\nabla{Q}_1\|_{L^4}+\|\nabla{Q}_2\|_{L^4})\non\\
&\quad +2|L_4|\|\nabla\bar{\mathbf{w}}\|_{L^4}\|\bar{Q}\|_{L^4}\|\nabla{Q}_1\|_{L^4}^2\non\\
&\leq
C\|\nabla\bar{\mathbf{w}}\|_{L^2}^{\frac12}\big(\|\Delta\bar{\mathbf{w}}\|_{L^2}+\|\bar{\mathbf{w}}\|_{L^2}\big)^{\frac12}\|\nabla\bar{Q}\|_{L^2}
\big(\|Q_1\|_{L^\infty}^\frac12\|\Delta{Q}_1\|_{L^2}^\frac12
+\|Q_2\|_{L^\infty}^\frac12\|\Delta{Q}_2\|_{L^2}^\frac12\big)\non\\
&\quad+C\|\nabla\bar{\mathbf{w}}\|_{L^2}^{\frac12}\big(\|\Delta\bar{\mathbf{w}}\|_{L^2}+\|\bar{\mathbf{w}}\|_{L^2}\big)^{\frac12}
\big(\|\bar{Q}\|_{L^2}^\frac12\|\nabla\bar{Q}\|_{L^2}^\frac12+\|\bar{Q}\|_{L^2}\big)\|Q_1\|_{L^\infty}\|\Delta{Q}_1\|_{L^2}\non\\
&\leq\frac{\nu}{16}\|\Delta\bar{\mathbf{w}}\|_{L^2}^2+\frac{\zeta^2}{16}\|\nabla\bar{Q}\|_{L^2}^2\non\\
&\quad +C(1+\|\Delta{Q}_1\|_{L^2}^2+\|\Delta{Q}_2\|_{L^2}^2)(\|\bar{\mathbf{w}}\|_{L^2}^2+\|\nabla\bar{\mathbf{w}}\|_{L^2}^2+\|\bar{Q}\|_{L^2}^2).
\end{align*}
Observe that
\begin{align*}
I_3
&=-\int_{\T}\nabla\bar{\mathbf{w}}: [\bar{Q}(\HH(Q_1)+\lambda_1 \mathbb{I}+\mu_1-\mu_1^T)]\,dx-\int_{\T}\nabla\bar{\mathbf{w}}: (Q_2\bar\HH)\,dx\non\\
&\quad +\int_{\T}\nabla\bar{\mathbf{w}}: [(\HH(Q_1)+\lambda_1 \mathbb{I}+\mu_1-\mu_1^T)\bar{Q}]\,dx+\int_{\T}\nabla\bar{\mathbf{w}}: (\bar\HH Q_2)\,dx\non\\
&:= I_{3a}+I_{3b}+I_{3c}+I_{3d}
\end{align*}
with obvious notation.
Concerning $I_{3a}+I_{3c}$, we obtain from the H\"older and Young inequalities that
\begin{align*}
I_{3a}+I_{3c}&\leq
\zeta\|\nabla\bar{\mathbf{w}}\|_{L^4}\|\bar{Q}\|_{L^4}\|\Delta Q_1\|_{L^2}+ C|L_4|\|\nabla\bar{\mathbf{w}}\|_{L^4}\|\bar{Q}\|_{L^4}\|Q_1\|_{L^\infty}\|Q_1\|_{H^2}\\
&\quad+C|L_4|\|\nabla\bar{\mathbf{w}}\|_{L^4}\|\bar{Q}\|_{L^4}\|\nabla{Q}_1\|_{L^4}^2+C\|\nabla\bar{\mathbf{w}}\|_{L^4}\|\bar{Q}\|_{L^4}(\|Q_1\|_{L^2}+\|Q_1\|_{L^6}^3)\\
&\leq
C\|\nabla\bar{\mathbf{w}}\|_{L^2}^{\frac12}\big(\|\Delta\bar{\mathbf{w}}\|_{L^2}+\|\bar{\mathbf{w}}\|_{L^2}\big)^{\frac12}
\big(\|\bar{Q}\|_{L^2}^{\frac12}\|\nabla\bar{Q}\|_{L^2}^{\frac12}+\|\bar{Q}\|_{L^2}\big) (1+\|\Delta{Q}_1\|_{L^2})\\
&\quad +C\|\nabla\bar{\mathbf{w}}\|_{L^2}^{\frac12}\big(\|\Delta\bar{\mathbf{w}}\|_{L^2}+\|\bar{\mathbf{w}}\|_{L^2}\big)^{\frac12}
\big(\|\bar{Q}\|_{L^2}^{\frac12}\|\nabla\bar{Q}\|_{L^2}^{\frac12}+\|\bar{Q}\|_{L^2}\big)\|Q_1\|_{L^\infty}\|\Delta{Q}_1\|_{L^2}\\
&\leq\frac{\nu}{16}\|\Delta\bar{\mathbf{w}}\|_{L^2}^2+\frac{\zeta^2}{16}\|\nabla\bar{Q}\|_{L^2}^2
+C(1+\|\Delta{Q}_1\|_{L^2}^2)(\|\bar{\mathbf{w}}\|_{L^2}^2+\|\nabla\bar{\mathbf{w}}\|_{L^2}^2+\|\bar{Q}\|_{L^2}^2).
\end{align*}
Next, we rewrite $I_{3b}+I_{3d}$ as
\begin{align}
I_{3b}+I_{3d}= \zeta \int_{\T}\nabla\bar{\mathbf{w}}: ( \Delta \bar Q Q_2-Q_2 \Delta \bar Q) dx + I_{3e}.\non
\end{align}
Then using H\"older and Young inequalities, we obtain that
\begin{align*}
I_{3e}&\leq  2|L_4|\|Q_1\|_{L^\infty}\big(\|\bar{\mathbf{w}}\|_{H^2}\|Q_2\|_{L^\infty}+\|\nabla\bar{\mathbf{w}}\|_{L^4}\|\nabla{Q}_2\|_{L^4}\big)
\|\nabla\bar{Q}\|_{L^2}\\
&\quad +2|L_4|\big(\|\bar{\mathbf{w}}\|_{H^2}\|Q_2\|_{L^\infty}+\|\nabla\bar{\mathbf{w}}\|_{L^4}\|\nabla{Q}_2\|_{L^4}\big)\|\nabla{Q}_2\|_{L^4}\|\bar{Q}\|_{L^4}\\
&\quad +C|L_4|\|\nabla\bar{\mathbf{w}}\|_{L^4}\|Q_2\|_{L^\infty}\|\nabla\bar{Q}\|_{L^2}(\|\nabla{Q}_1\|_{L^4}+\|\nabla{Q}_2\|_{L^4})\\
&\quad +C\|\nabla\bar{\mathbf{w}}\|_{L^2}\|Q_2\|_{L^\infty}(1+\|Q_1\|_{L^\infty}^2+\|Q_2\|_{L^\infty}^2)\|\bar Q\|_{L^2} \\
&\leq
2C^*|L_4|\|Q_1\|_{L^\infty}\|Q_2\|_{L^\infty}(\|\Delta\bar{\mathbf{w}}\|_{L^2} + \|\bar{\mathbf{w}}\|_{L^2})\|\nabla\bar{Q}\|_{L^2}\\
&\quad +C\|\nabla\bar{\mathbf{w}}\|_{L^2}^{\frac12}\big(\|\Delta\bar{\mathbf{w}}\|_{L^2}+\|\bar{\mathbf{w}}\|_{L^2}\big)^{\frac12}\|\nabla{Q}_2\|_{L^4}\|\nabla\bar{Q}\|_{L^2}\non\\
&\quad+C\big(\|\Delta\bar{\mathbf{w}}\|_{L^2} + \|\bar{\mathbf{w}}\|_{L^2}\big)\|\nabla{Q}_2\|_{L^4}
\big(\|\bar{Q}\|_{L^2}^{\frac12}\|\nabla\bar{Q}\|_{L^2}^{\frac12}+\|\bar{Q}\|_{L^2}\big)\\
&\quad+C\|\nabla\bar{\mathbf{w}}\|_{L^2}^{\frac12}\big(\|\Delta\bar{\mathbf{w}}\|_{L^2}+\|\bar{\mathbf{w}}\|_{L^2}\big)^{\frac12}
\|\nabla{Q}_2\|_{L^4}^2\big(\|\bar{Q}\|_{L^2}^{\frac12}\|\nabla\bar{Q}\|_{L^2}^{\frac12}+\|\bar{Q}\|_{L^2}\big)\\
&\quad+C\|\nabla\bar{\mathbf{w}}\|_{L^2}^{\frac12}\big(\|\Delta\bar{\mathbf{w}}\|_{L^2}+\|\bar{\mathbf{w}}\|_{L^2}\big)^{\frac12}
\|\nabla\bar{Q}\|(\|\nabla{Q}_1\|_{L^4}+\|\nabla{Q}_2\|_{L^4})\\
&\quad +C\|\nabla\bar{\mathbf{w}}\|_{L^2}\|\bar Q\|_{L^2},
\end{align*}
where $C^*$ is a constant depending only on $\T$ (see \eqref{H2}). By our choice of $\eta_2$ and \eqref{smallness}, we have
$$2C^*|L_4|\|Q_1\|_{L^\infty}\|Q_2\|_{L^\infty}\leq 2C^*|L_4|\eta_2^2=\frac{\sqrt{\nu}\zeta}{8}.$$
Then by Young's inequality, we obtain that
\begin{align*}
I_{3e}&\leq\frac{\nu}{16}\|\Delta\bar{\mathbf{w}}\|_{L^2}^2+\frac{\zeta^2}{4}\|\nabla\bar{Q}\|_{L^2}^2
+C(1+\|\Delta{Q}_1\|_{L^2}^2+\|\Delta{Q}_2\|_{L^2}^2)(\|\bar{\mathbf{w}}\|_{L^2}^2+\|\nabla\bar{\mathbf{w}}\|_{L^2}^2+\|\bar{Q}\|_{L^2}^2).
\end{align*}
Denote $$\bar \xi = \frac12(\nabla\bar{\mathbf{w}}-\nabla^T\bar{\mathbf{w}}).$$
We infer from Lemma \ref{cancel} that
\begin{align}
&\zeta \int_{\T}\nabla\bar{\mathbf{w}}: ( \Delta \bar Q Q_2-Q_2 \Delta \bar Q) dx\non\\
&\quad = \zeta\int_{\T} \nabla\bar{\mathbf{w}}: ( \Delta \bar Q Q_2-Q_2 \Delta \bar Q) dx \non\\
&\quad = -\zeta\int_{\T} (Q_2\bar\xi - \bar\xi Q_2): \Delta \bar Q dx\non\\
&\quad = -\zeta\int_{\T} (Q_2\Delta \bar\xi - \Delta \bar\xi Q_2) :  \bar Q dx - 2\zeta\int_{\T} (\p_i Q_2\p_i \bar\xi - \p_i \bar \xi\p_i Q_2) :  \bar Q dx\non\\
&\qquad -\zeta\int_{\T} (\Delta Q_2  \bar\xi -  \bar\xi \Delta Q_2) : \bar Q dx\non\\
&\quad := -\zeta\int_{\T} (Q_2\Delta \bar\xi - \Delta \bar\xi Q_2) :  \bar Q dx + I_{4a}+I_{4b},\non
\end{align}
with obvious notation. Using H\"older and Young inequalities, we obtain
\begin{align}
I_{4a}+I_{4b}
&\leq C\|\nabla Q_2\|_{L^4}\|\nabla \bar\xi\|_{L^4}\|\bar Q\|_{L^2} +C\|\Delta Q_2\|_{L^2}\|\bar \xi\|_{L^4}\|\bar Q\|_{L^4}\non\\
&\leq C\|Q_2\|_{L^\infty}^\frac12\|\Delta Q_2\|_{L^2}^\frac12 \|\nabla\bar{\mathbf{w}}\|_{L^2}^{\frac12}\big(\|\Delta\bar{\mathbf{w}}\|_{L^2}+\|\bar{\mathbf{w}}\|_{L^2}\big)^{\frac12}
\|\bar Q\|_{L^2}\non\\
&\quad +C\|\Delta Q_2\|_{L^2}(\| \bar{\mathbf{w}}\|_{L^2}^\frac12\|\nabla \bar{\mathbf{w}}\|_{L^2}^\frac12
+\|\bar{\mathbf{w}}\|_{L^2})(\|\bar Q\|_{L^2}^\frac12\|\nabla \bar Q\|_{L^2}^\frac12+\|\bar Q\|_{L^2})\nonumber\\
&\leq \frac{\nu}{4} \|\Delta \bar{\mathbf{w}}\|_{L^2}^2+ \frac{\zeta^2}{8}\|\nabla \bar Q\|_{L^2}^2
+C(1+ \|\Delta Q_2\|_{L^2}^2)   (\|\bar{\mathbf{w}}\|_{L^2}^2+\|\nabla \bar{\mathbf{w}}\|_{L^2}^2+\|\bar Q\|_{L^2}^2).\non
\end{align}
Collecting the above estimates, we conclude that
\begin{align}\label{uniqueness-estimate-1}
\frac{d}{dt}&\|\bar{\mathbf{w}}\|_{H^1}^2+2\nu\|\nabla\bar{\mathbf{w}}\|_{L^2}^2+\nu\|\Delta\bar{\mathbf{w}}\|_{L^2}^2\non\\
&\leq C(1+\|\bu_1\|_{H^1}^2+\|\bu_2\|_{H^2}^2+\|\Delta{Q}_1\|_{L^2}^2+\|\Delta{Q}_2\|_{L^2}^2)(\|\bar{\mathbf{w}}\|_{H^1}^2+\|\bar{Q}\|_{L^2}^2)\non\\
&\quad + \zeta^2 \|\nabla\bar{Q}\|^2 -2\zeta\int_{\T} (Q_2\Delta \bar\xi - \Delta \bar\xi Q_2) :  \bar Q dx.
\end{align}
Now we investigate the equation for $\bar{Q}$:
\begin{align}\label{equ-Q-bar}
&\partial_t\bar{Q}_{ij} +\bar{u}_k\partial_k({Q}_1)_{ij} +(u_2)_{k}\partial_k\bar{Q}_{ij}
+(\bar Q_{ik}({\omega}_1)_{kj}+({Q}_2)_{ik}\bar\omega_{kj})
-((\omega_1)_{ik}\bar{Q}_{kj}+\bar\omega_{ik}({Q}_2)_{kj})\non\\
&\quad =\zeta\Delta\bar{Q}_{ij}-a\bar{Q}_{ij}-c\tr^2(Q_1)\bar{Q}_{ij}-c[\bar{Q}:(Q_1+Q_2)](Q_2)_{ij}\non\\
&\qquad +2L_4\partial_k(\bar{Q}_{lk}\partial_l (Q_1)_{ij}+(Q_2)_{lk}\partial_l\bar{Q}_{ij})
        -L_4(\partial_i\bar{Q}_{kl}\partial_j(Q_1)_{kl}+\partial_i(Q_2)_{kl}\partial_j\bar{Q}_{kl})\non\\
&\qquad +\frac{L_4}{2}\partial_m\bar{Q}_{kl}\partial_m((Q_1)_{kl}+(Q_2)_{kl})\delta_{ij}.
\end{align}
Testing \eqref{equ-Q-bar} with $2\zeta\bar{Q}_{ij}$, summing over $i, j$ from $1$ to $2$, and then integrate over $\T$, by the incompressibility condition
\eqref{incompressibility} and the fact $(\omega_1\bar{Q}-\bar{Q}\omega_1):\bar{Q}=0$ (recall Lemma \ref{cancel}), we deduce
after integration by parts that
\begin{align}
&\zeta\frac{d}{dt}\|\bar{Q}\|_{L^2}^2 + 2\zeta^2\|\nabla\bar{Q}\|^2 \non\\
&\quad =-2\zeta\int_{\T}(\bar{\bu}\cdot\nabla{Q}_1):\bar{Q}\,dx
+2\zeta\int_{\T}(\bar\omega{Q}_2-Q_2\bar\omega):\bar{Q}\,dx \non\\
&\qquad -2\zeta\int_{\T}a|\bar{Q}|^2dx -2\zeta c\int_{\T}\left(\tr^2(Q_1)|\bar{Q}|^2+[\bar{Q}:(Q_1+Q_2)](Q_2:\bar{Q})\right)\,dx\non\\
&\qquad -4\zeta L_4\int_{\T}(Q_2)_{lk}\partial_l\bar{Q}_{ij}\partial_k\bar{Q}_{ij}\,dx
-4\zeta L_4\int_{\T}\bar{Q}_{lk}\partial_l(Q_1)_{ij}\partial_k\bar{Q}_{ij}\,dx\non\\
&\qquad -2\zeta L_4\int_{\T}(\partial_i\bar{Q}_{kl}\partial_j(Q_1)_{kl}+\partial_i(Q_2)_{kl}\partial_j\bar{Q}_{kl})\bar{Q}_{ij}\,dx\non\\
&\quad := \sum_{i=1}^7J_i.\label{dffQ}
\end{align}
Let us estimate the terms $J_1$ to $J_7$ individually. First, for $J_1$, we have
\begin{align*}
J_1&\leq \|\bar{\bu}\|_{L^2}\|\nabla{Q}_1\|_{L^4}\|\bar{Q}\|_{L^4}\\
&\leq
(\|\bar{\mathbf{w}}\|_{L^2}+\|\Delta\bar{\mathbf{w}}\|_{L^2})\|Q_1\|_{L^\infty}^\frac12\|\Delta{Q}_1\|_{L^2}^\frac12
\big(\|\bar{Q}\|_{L^2}^{\frac12}\|\nabla\bar{Q}\|_{L^2}^{\frac12}+\|\bar{Q}\|_{L^2}\big)\\
&\leq\frac{\zeta^2}{8}\|\nabla\bar{Q}\|_{L^2}^2 +\frac{\nu}{2}\|\Delta\bar{\mathbf{w}}\|_{L^2}^2
+C(1+\|\Delta{Q}_1\|_{L^2}^2)(\|\bar{\mathbf{w}}\|_{L^2}^2+\|\bar{Q}\|_{L^2}^2).
\end{align*}
Concerning $J_2$, we see from the identity $(-\Delta +I)^{-1} \bar \omega = \bar \xi$ that
\begin{align}
J_2
&=2\zeta\int_{\T}\big\{[(-\Delta +I)\bar\xi] {Q}_2- Q_2 [(-\Delta +I)\bar\xi]\big\} :\bar{Q}\,dx\non\\
&=2\zeta\int_{\T}\big(Q_2 \Delta \bar\xi- \Delta\bar\xi {Q}_2 \big) :\bar{Q}\,dx + 2\zeta\int_{\T}\big( \bar\xi {Q}_2- Q_2 \bar\xi\big) :\bar{Q}\,dx\non\\
&\leq 2\zeta\int_{\T}\big(Q_2 \Delta \bar\xi- \Delta\bar\xi {Q}_2 \big) :\bar{Q}\,dx +C \|Q_2\|_{L^\infty}\|\nabla \bar{\mathbf{w}}\|_{L^2}\|\bar{Q}\|_{L^2}\non\\
&\leq 2\zeta\int_{\T}\big(Q_2 \Delta \bar\xi- \Delta\bar\xi {Q}_2 \big) :\bar{Q}\,dx+ C(\|\nabla \bar{\mathbf{w}}\|_{L^2}^2+\|\bar{Q}\|_{L^2}^2).\non
\end{align}
By \eqref{smallness}, it easily follows  that
\begin{equation*}
J_3+J_4\leq C\|\bar{Q}\|^2,
\end{equation*}
and
\begin{align*}
J_5&\leq 4\zeta |L_4|\|Q_2\|_{L^\infty}\|\nabla\bar{Q}\|_{L^2}^2\leq\frac{\zeta^2}{2}\|\nabla\bar{Q}\|_{L^2}^2.
\end{align*}
Finally, we deduce that
\begin{align*}
J_6+J_7
&\leq C \|\nabla\bar{Q}\|_{L^2}\|\bar{Q}\|_{L^4}(\|\nabla{Q}_1\|_{L^4}+\|\nabla{Q}_2\|_{L^4})\\
&\leq C\|\nabla\bar{Q}\|_{L^2}\big(\|\bar{Q}\|_{L^2}^{\frac12}\|\nabla\bar{Q}\|_{L^2}^{\frac12}+\|\bar{Q}\|_{L^2}\big)
(\|Q_1\|_{L^\infty}^\frac12\|\Delta{Q}_1\|_{L^2}^\frac12+\|Q_2\|_{L^\infty}^\frac12\|\Delta{Q}_2\|_{L^2}^\frac12)\\
&\leq\frac{\zeta^2}{8}\|\nabla\bar{Q}\|_{L^2}^2+C(1+\|\Delta{Q}_1\|_{L^2}^2+\|\Delta{Q}_2\|_{L^2}^2)\|\bar{Q}\|_{L^2}^2.
\end{align*}
Summing up the estimates for $J_1$, ..., $J_7$, we conclude from \eqref{dffQ} that
\begin{align}\label{uniqueness-estimate-2}
&\zeta\frac{d}{dt}\|\bar{Q}\|_{L^2}^2 + \frac{3\zeta^2}{2}\|\nabla\bar{Q}\|_{L^2}^2\non\\
&\quad \leq \frac{\nu}{2}\|\Delta\bar{\mathbf{w}}\|_{L^2}^2+2\zeta\int_{\T}\big(Q_2 \Delta \bar\xi- \Delta\bar\xi {Q}_2 \big) :\bar{Q}\,dx\non\\
&\qquad +C(1+\|\Delta{Q}_1\|_{L^2}^2+\|\Delta{Q}_2\|_{L^2}^2)(\| \bar{\mathbf{w}}\|_{L^2}^2+\|\nabla \bar{\mathbf{w}}\|_{L^2}^2+\|\bar{Q}\|_{L^2}^2).
\end{align}
Finally, adding \eqref{uniqueness-estimate-1} with \eqref{uniqueness-estimate-2}, we obtain the following inequality
\begin{align}\label{diff-final}
&\frac{d}{dt}\big(\|\bar{\mathbf{w}}\|_{H^1}^2+\zeta\|\bar{Q}\|_{L^2}^2\big)
+ \frac{\nu}{2}\|\Delta\bar{\mathbf{w}}\|_{L^2}^2+\frac{\zeta^2}{2}\|\nabla\bar{Q}\|_{L^2}^2\non\\
&\quad \leq
C(1+\|\bu_1\|_{H^1}^2+\|\bu_2\|_{H^2}^2+\|\Delta{Q}_1\|_{L^2}^2+\|\Delta{Q}_2\|_{L^2}^2)(\|\bar{\mathbf{w}}\|_{H^1}^2+\|\bar{Q}\|_{L^2}^2).
\end{align}
Then applying Gronwall's inequality, we easily deduce the continuous dependence result \eqref{inequality-continuous-dependence} from \eqref{diff-final}.
\end{proof}
\medskip
As an immediate consequence of Lemma \ref{prop on continuous dependence}, we have
\begin{corollary}
Suppose the assumptions in Lemma \ref{prop on continuous dependence} are satisfied. The global
weak solution to problem \eqref{NSE}--\eqref{IC} is unique, if there exists any.
\end{corollary}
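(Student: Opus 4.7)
The plan is to derive uniqueness as an immediate consequence of the continuous dependence estimate \eqref{inequality-continuous-dependence} established in Lemma \ref{prop on continuous dependence}. Specifically, I would take two global weak solutions $(\bu_1, Q_1)$ and $(\bu_2, Q_2)$ of problem \eqref{NSE}--\eqref{IC} issued from the same initial data $(\bu_0, Q_0)$, and both satisfying the $L^\infty$ smallness bound \eqref{smallness} by hypothesis.

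Applying Lemma \ref{prop on continuous dependence} with $\bu_{01}=\bu_{02}=\bu_0$ and $Q_{01}=Q_{02}=Q_0$, we have $\bar{\mathbf{w}}_0=(-\Delta+I)^{-1}(\bu_{01}-\bu_{02})=0$ and $\bar{Q}_0=Q_{01}-Q_{02}=0$. The estimate \eqref{inequality-continuous-dependence} then yields
\begin{equation*}
\|\bar{\mathbf{w}}(t)\|_{H^1}^2+\|\bar{Q}(t)\|_{L^2}^2 \leq Ce^{Ct}\cdot 0 = 0, \qquad \forall\, t\in(0,T),
\end{equation*}
so that $\bar{Q}\equiv 0$ and $\bar{\mathbf{w}}\equiv 0$ on $\T\times[0,T]$.

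To conclude, I would invoke the injectivity of the operator $(-\Delta + I): H^1(\T)\to (H^1(\T))'$ (which follows from its strict positivity on the torus). Since $\bar{\mathbf{w}}=(-\Delta+I)^{-1}(\bu_1-\bu_2)=0$, we obtain $\bu_1=\bu_2$ a.e. in $\T\times(0,T)$, and combined with $Q_1=Q_2$, this gives the uniqueness of the weak solution.

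There is essentially no obstacle remaining, as the entire analytical effort was absorbed into the proof of Lemma \ref{prop on continuous dependence}; the only point requiring minor care is verifying that the hypothesis \eqref{smallness} of that lemma is met by both solutions under consideration, which is precisely what the corollary assumes. Hence the corollary is a one-line deduction from the continuous dependence inequality, and the proof can be stated very concisely.
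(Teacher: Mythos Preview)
Your proposal is correct and matches the paper's treatment exactly: the paper states the corollary as an immediate consequence of Lemma \ref{prop on continuous dependence} without giving any further argument, and your deduction---taking two solutions with identical initial data so that $\bar{\mathbf{w}}_0=0$ and $\bar{Q}_0=0$, then invoking \eqref{inequality-continuous-dependence} and the injectivity of $(-\Delta+I)$---is precisely the intended one-line reasoning.
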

\begin{remark}
When the spatial dimension is two, for the special case $L_2=L_3=L_4=0$,
a weak-strong uniqueness result for the Cauchy problem of the Beris--Edwards system was given in \cite{PZ12} and later,
the uniqueness of weak solutions was proved in \cite{D15}. Our Lemma \ref{prop on continuous dependence} extends the previous results in the two dimensional periodic setting
and it still works in the case of whole space $\mathbb{R}^2$ (cf. \cite{LTX16}).
\end{remark}

\subsection{Proof of Theorem \ref{main-theorem}}
According to \eqref{eta1}, \eqref{eta2} and in view of \eqref{zk}, we are able
to choose the positive constants $K_1$, $K_2$ stated in Theorem
\ref{main-theorem}. In particular, we have
$0<\eta\leq\min\{\eta_1,\eta_2\}$, where $\eta_1$ and $\eta_2$ are
the constants in Proposition \ref{approx1} and Lemma \ref{prop on
continuous dependence}, respectively. Given any initial datum
$Q_0\in H^1(\T)\cap L^\infty(\T)$ with
$\|Q_0\|_{L^\infty}<\sqrt{\eta}$, there exits a sequence
$\{Q^n_0\}\subset H^2(\T)\hookrightarrow\hookrightarrow (H^1(\T)\cap L^\infty(\T))$ such that $Q^n_0\rightarrow Q_0$ strongly
in $H^1(\T) \cap L^\infty(\T)$. Furthermore, we can find $N\in
\mathbb{N}$ such that for $n\geq N$, it holds $\|Q_0^n\|_{L^\infty}<
\sqrt{\eta}$. Without loss of generality, we take $N=1$.

Since $\eta\leq \eta_1$, then for every pair of initial data $(\bu_0, Q_0^n)\in L^2_\sigma(\T)\cap H^2(\T)$,
we infer from Lemma \ref{prop on existence}
that problem \eqref{NSE}--\eqref{IC} admits a global weak solution  $(\bu_n, Q_n)$.
Thanks to $\eta\leq \eta_2$, we can apply Lemma  \ref{prop on continuous dependence} to conclude that
$\{(\bu_n, Q_n)\}$ is indeed a Cauchy sequence in $L^\infty(0, T; (H^1_\sigma(\T))')\times
L^\infty(0, T; L^2(\T))$, whose limit is denoted by $(\bu, Q)$.
Based on the dissipative energy law \eqref{eq:1.25} for $(\bu_n, Q_n)$
and the uniform bound on $\|Q_n\|_{L^\infty(0,T; L^\infty(\T))}$ (see \eqref{eq:1.20b}),
following a similar argument like in the proof of Lemma \ref{prop on existence},
it is standard to check that the limit function $(\bu, Q)$ is indeed a global weak solution
to problem \eqref{NSE}--\eqref{IC} with the initial data $(\bu_0, Q_0)\in L^2_\sigma(\T)\times (H^1(\T)\cap L^\infty(\T))$.
Finally, uniqueness of the global weak solution is a direct consequence of Lemma \ref{prop on continuous dependence}.

The proof of Theorem \ref{main-theorem} is complete.
\smallskip

\section{Appendix}
\setcounter{equation}{0}
For the convenience of the readers, we present the derivation of $\HH$  and $\HH+\lambda\Id+\mu-\mu^{T}$.

\begin{lemma}
The right-hand side of equation \eqref{Q equ} can be written as
\begin{align*}
 &(\HH+\lambda\Id+\mu-\mu^{T})_{ij}\\
&\quad =\zeta\Delta{Q_{ij}}+2L_4 (  Q_{ij,l}Q_{l k})_{,k} -L_4 Q_{kl,i} Q_{kl,j} + \frac{L_4}{2}|\nabla{Q}|^2 \delta_{ij} -aQ_{ij}-c \tr(Q^2)Q_{ij}
\end{align*}
and
\begin{align*}
  -\HH_{ij} &= -2L_1\Delta{Q_{ij}} -2(L_2+L_3) Q_{ik,kj}-2L_4 Q_{ij,\ell} Q_{l k,k}
-2L_4 Q_{ij,l k}Q_{l k}\non\\
&\quad +L_4 Q_{kl,i} Q_{kl,j}+aQ_{ij}-bQ_{jk}Q_{ki}+c\tr(Q^2)Q_{ij},
\end{align*}
for  $1\leq i,\,j,\,k,\,l\leq 2$.
\end{lemma}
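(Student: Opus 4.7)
The plan is to derive both formulas by (i) computing $-\HH$ as the $L^2$-gradient of $\E$ in the unconstrained space of $2\times 2$ matrices via the Euler--Lagrange formalism, and (ii) adjoining Lagrange multipliers to project onto $\mathcal{S}_0^{(2)}$, simplifying by two special features of the two-dimensional symmetric traceless setting.

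First I would apply the formula $-\HH_{ij} = \partial\F/\partial Q_{ij} - \partial_k(\partial\F/\partial Q_{ij,k})$ term by term, treating $Q$ as an unconstrained $2\times 2$ matrix. The bulk density contributes $aQ_{ij} - bQ_{jk}Q_{ki} + c\tr(Q^2)Q_{ij}$ (using $\delta\tr(Q^3)/\delta Q_{ij}=3(Q^2)_{ji}$); the $L_1$-, $L_2$-, $L_3$-, and $L_4$-terms, after routine differentiation and relabeling of dummy indices, contribute $-2L_1\Delta Q_{ij}$, $-2L_2 Q_{ik,kj}$, $-2L_3 Q_{ik,kj}$, and $-2L_4 Q_{ij,l}Q_{lk,k}-2L_4 Q_{ij,lk}Q_{lk} + L_4 Q_{kl,i}Q_{kl,j}$ respectively. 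Assembling the pieces establishes the first displayed formula for $-\HH_{ij}$.

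For the second formula, the Lagrange multipliers $\lambda$ and $\mu-\mu^T$ must cancel the non-traceless and non-symmetric parts of $\HH$. Requiring $\tr(\HH+\lambda\Id+\mu-\mu^T)=0$ gives $\lambda=-\tfrac12\tr\HH$; using $\tr Q=0$ to kill every term involving $Q_{ii}$, $\Delta Q_{ii}$, or $Q_{ii,l}$, the trace of $\HH$ reduces to $2(L_2+L_3)\partial_k\partial_i Q_{ik} - L_4|\nabla Q|^2 + b\tr(Q^2)$. Requiring the sum to be symmetric yields $\mu-\mu^T=-\HH^a$; since $Q=Q^T$, every piece of $\HH$ is manifestly symmetric in $(i,j)$ except the $L_2+L_3$ term, so $\HH^a_{ij}=(L_2+L_3)(Q_{ik,kj}-Q_{jk,ki})$.

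The crux is then to simplify the sum $\HH+\lambda\Id+\mu-\mu^T$, and here two identities special to two-dimensional symmetric traceless matrices do the work:
\begin{align*}
  &\text{(a)}\ \ Q^2=\tfrac12\tr(Q^2)\Id \quad\text{(Cayley--Hamilton with }\tr Q=0\text{)},\\
  &\text{(b)}\ \ Q_{ik,kj}+Q_{jk,ki}-(\partial_k\partial_l Q_{kl})\delta_{ij}=\Delta Q_{ij}.
\end{align*}
Identity (a) causes $bQ_{jk}Q_{ki}$ to cancel exactly against the $-\tfrac{b}{2}\tr(Q^2)\delta_{ij}$ contribution coming from $\lambda\Id$, eliminating the cubic bulk term. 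Identity (b) combines the three $L_2+L_3$ contributions (from $\HH$, from the $-(L_2+L_3)\partial_k\partial_i Q_{ik}\delta_{ij}$ piece of $\lambda\Id$, and from $\mu-\mu^T$) into a single $(L_2+L_3)\Delta Q_{ij}$, which merges with the existing $2L_1\Delta Q_{ij}$ to produce $\zeta\Delta Q_{ij}$. The $L_4$-derivative terms reassemble as $2L_4(Q_{ij,l}Q_{lk})_{,k}-L_4 Q_{kl,i}Q_{kl,j}$, the $\frac{L_4}{2}|\nabla Q|^2\delta_{ij}$ piece survives from $\lambda\Id$, and the remaining bulk terms $-aQ_{ij}-c\tr(Q^2)Q_{ij}$ pass through unchanged, yielding the stated formula. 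I expect the main obstacle to be verifying identity (b); it can be established either by direct inspection in the parametrization $Q_{11}=-Q_{22}=q_1$, $Q_{12}=Q_{21}=q_2$, or by noting that both sides are symmetric and traceless in $(i,j)$ and agree component by component upon substituting the traceless constraint $Q_{22}=-Q_{11}$ into $\partial_k\partial_l Q_{kl}$.
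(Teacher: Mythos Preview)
Your proposal is correct and follows essentially the same route as the paper: compute the unconstrained Euler--Lagrange expression for $-\HH$, determine $\lambda$ and $\mu-\mu^T$ from the trace-free and symmetry constraints, and then simplify using the two-dimensional identities $Q^2=\tfrac12\tr(Q^2)\Id$ and $Q_{ik,kj}+Q_{jk,ki}-(\partial_k\partial_l Q_{kl})\delta_{ij}=\Delta Q_{ij}$, which are exactly the identities the paper invokes (the latter is the paper's relation \eqref{l2l3delta}). Your framing via Cayley--Hamilton and the explicit labeling of identity~(b) make the argument slightly more transparent, but there is no substantive difference in method.
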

\begin{proof}
We notice the $\HH$ is the minus variational derivative of $\E(Q)$ without imposing any constraints on $Q$.
A direct calculation yields that (see \cite[Proposition A.1]{IXZ14})
\begin{align*}
\left(\frac{\delta\mathcal{E}}{\delta Q}\right)_{ij}
&=-2L_1\Delta{Q_{ij}}-2(L_2+L_3)\partial_j\partial_kQ_{ik}-2L_4\partial_lQ_{ij}\partial_kQ_{lk}
-2L_4\partial_l\partial_kQ_{ij}Q_{lk}\\
&\qquad+L_4\partial_iQ_{kl}\partial_jQ_{kl}+aQ_{ij}-bQ_{jk}Q_{ki}+c\tr(Q^2)Q_{ij},
\end{align*}
which implies \eqref{Hdef}.

Substituting the above relation in~\eqref{Q equ} and choosing $\mu$ to enforce the
symmetry constraint $Q = Q^T$ yields that
$$
  \mu_{ij}-\mu_{ji}=(L_2+L_3)\left(\partial_i\partial_kQ_{jk}-\partial_j\partial_kQ_{ik}\right).
$$
Similarly, choosing $\lambda$ to enforce the trace free constraint $\tr(Q) = 0$ forces
$$
  \lambda=-\frac{b}{2}\tr(Q^2)-(L_2+L_3)\partial_l\partial_kQ_{lk}+\frac{L_4}{2}|\nabla{Q}|^2.
$$
Combining the expressions of $\lambda$, $\mu$ and $\mathcal{H}$ yields that
\begin{align}
  &(\HH+\lambda\Id+\mu-\mu^{T})_{ij}\non\\
&\quad =2L_1\Delta{Q_{ij}} +(L_2+L_3)(\partial_j\partial_kQ_{ik}+\partial_i\partial_kQ_{jk})+2L_4\partial_lQ_{ij}\partial_kQ_{lk}
\non\\
&\qquad +2L_4\partial_l\partial_kQ_{ij}Q_{lk}-L_4\partial_iQ_{kl}\partial_jQ_{kl}-aQ_{ij}+bQ_{jk}Q_{ki}-c
\tr(Q^2)Q_{ij}\non\\
&\qquad+\left(-\frac{b}{2}\tr(Q^2)-(L_2+L_3)\partial_l\partial_kQ_{lk}+\frac{L_4}{2}|\nabla{Q}|^2\right)\delta_{ij}.
\label{qequ}
\end{align}
Since $Q$ is a $2\times 2$ symmetric and traceless matrix, we can collect
the terms in \eqref{qequ} with factor $L_2+L_3$ and by a straightforward computation to show that
\begin{equation}
(L_2+L_3)(\partial_j\partial_kQ_{ik}+\partial_i\partial_kQ_{jk})-(L_2+L_3)\partial_l\partial_kQ_{lk}\delta_{ij}=(L_2+L_3)\Delta{Q}_{ij}.
\label{l2l3delta}
\end{equation}
 This together with \eqref{qequ} implies
\begin{equation*}
\begin{split}
&(\HH+\lambda\Id+\mu-\mu^{T})_{ij}\\
&\quad =(2L_1+L_2+L_3)\Delta{Q_{ij}}+2L_4\partial_lQ_{ij}\partial_kQ_{lk}
+2L_4\partial_l\partial_kQ_{ij}Q_{lk} -L_4\partial_iQ_{kl}\partial_jQ_{kl}\\
&\qquad -aQ_{ij}+bQ_{jk}Q_{ki}-c\tr(Q^2)Q_{ij} +\left(-\frac{b}{2}\tr(Q^2)+\frac{L_4}{2}|\nabla{Q}|^2\right)\delta_{ij}
\end{split}
\end{equation*}
and the desired result follows from \eqref{def of zeta} and the identity
$$
 b\Big(Q_{jk}Q_{ki}-\frac12  \tr(Q^2)\Big)\delta_{ij}=0.
$$
The proof is complete.
\end{proof}
\begin{remark}
(1) Since $\tr(Q^3)=0$ holds for any $Q\in \mathcal{S}_0^{(2)}$, the term $\frac{b}{3}\tr(Q^3)$ simply vanishes in $\E(Q)$. Thus, in the two dimensional system \eqref{NSE}--\eqref{Q equ} the term associated with $b$ does not appear explicitly (see the expression of $\HH+\lambda\Id+\mu-\mu^{T}$). Hence, we do not need to impose any condition on the coefficient $b$.

(2) The specific relation \eqref{l2l3delta} helps to derive the
above simplified form of $\HH+\lambda\Id+\mu-\mu^{T}$, which
however, only holds for $Q\in \mathcal{S}_0^{(2)}$. In the three
dimensional case, the terms associated with $L_1,\,L_2,\,L_3$ cannot
be rewritten as $(2L_1+L_2+L_3)\Delta Q$. Instead, they lead to an
anisotropic elliptic operator that satisfies the strong Legendre
condition (see e.g., \cite{HD14,LW16}).
Whether the result on existence of global weak solutions obtained Theorem \ref{main-theorem}
can be extended to the three dimensional case remains an open question.
\end{remark}

\bigskip


\noindent \textbf{Acknowledgements.} Y.~N. Liu is supported by NNSFC
grant No. 11601334. H. Wu is supported by NNSFC grant No. 11631011 and the
Shanghai Center for Mathematical Sciences. 
Part of the work was commenced when X. Xu was visiting Shanghai Key Laboratory
for Contemporary Applied Mathematics and School of Mathematical
Sciences at Fudan University in May 2017 and May 2018, whose
hospitality is gratefully acknowledged.


\end{document}